\documentclass[graybox, leqno]{svmult}

\usepackage{type1cm}        
\usepackage{makeidx}         
\usepackage{graphicx}        
\usepackage{multicol}        
\usepackage[bottom]{footmisc}

\usepackage{newtxtext}       %
\usepackage[varvw]{newtxmath}       

\makeatletter
\@namedef{subjclassname@2020}{\textup{2020} Mathematics Subject Classification}
\makeatother

\usepackage{lipsum}
\usepackage{amsfonts}
\usepackage{graphicx}
\usepackage{epstopdf}
\usepackage{algorithmic}
\usepackage{calligra}
\usepackage{amsfonts,amsmath,amsthm}
\usepackage{mathtools}
 	\usepackage[colorlinks=true,linkcolor=blue, citecolor = blue]{hyperref}
\usepackage[makeroom]{cancel}
\usepackage{autonum}
\usepackage{hhline}
\usepackage{array}
\usepackage{diagbox}
\usepackage{tcolorbox}
\usepackage{mdframed}
\usepackage{multicol}
\usepackage{graphicx}
\usepackage{subcaption}
\usepackage{moreverb}
\usepackage{bbm}
\usepackage[margin=1.25in]{geometry}
\usepackage{todonotes}
\usepackage{scalerel}
\allowdisplaybreaks
\usepackage{mathrsfs}  
\usepackage{lineno}
\usepackage{todonotes}
\usepackage{tikz}
\usepackage{appendix}
\usepackage{enumitem}
\usepackage{pgfplots}
\usetikzlibrary{arrows.meta}
\usepackage{siunitx}
\usepackage[numbers,sort&compress]{natbib}
\definecolor{mygreen}{HTML}{43a047}
\usepackage{subcaption}
\usepackage{doi}
\usepackage{soul}
\usepackage[misc]{ifsym}
\usepackage{siunitx}
\usepackage{dsfont}
 	\definecolor{darkgreen}{rgb}{0.0, 0.2, 0.13}

\newcommand{\Om}{\Omega}

\newcommand{\pt}{p_t}
\newcommand{\ptt}{p_{tt}}

\newcommand{\ds}{\, \textup{d} s }
\newcommand{\dx}{\, \textup{d} \boldsymbol{x}}

\newcommand{\dG}{\, \textup{d} \Gamma}
\newcommand{\dGs}{\, \textup{d} \Gamma \textup{d}s}

\newcommand{\dxs}{\, \textup{d}\boldsymbol{x}\textup{d}s}

\newcommand{\intt}{\int_0^t}

\newcommand{\intO}{\int_{\Omega}}

\newcommand{\R}{\mathbb{R}} 
 
\newcommand{\Ltwo}{L^2(\Omega)}
\newcommand{\Linf}{L^\infty(\Omega)}

\newcommand{\LinfLinf}{L^\infty(0,T; L^\infty(\Omega))}

\def\Linf{L^\infty(\Omega)}

\newcommand{\Ctr}{C_{\textup{tr}}}

\numberwithin{lemma}{section}
\numberwithin{proposition}{section}
\numberwithin{theorem}{section}
\numberwithin{equation}{section}
\makeatletter
\newcommand{\leqnomode}{\tagsleft@true}
\newcommand{\reqnomode}{\tagsleft@false}
\makeatother

\definecolor{grey}{rgb}{0.5,0.5,0.5}

\definecolor{darkgreen}{rgb}{0,0.5,0}

\def\ball{\mathbb{B}_h}

\def\uzeroh{u_{0,h}}

\def\Xp{\mathcal{X}_p}

\def\intG{\int_{\Gamma}}

\def\Xu{\mathcal{X}_u}

\def\eps{\varepsilon}

\def\LtwoTLtwo{L^2(0,T; \Ltwo)}

\def\LinftLinf{L^\infty(0,t; \Linf)}

\def\csq{c^2}

\def\LtwoG{L^2(\Gamma)}

\def\uzero{u_0}

\def\fp{f_p}
\def\fu{f_u}

\def\LtwotLtwo{L^2(0,t; \Ltwo)}

\newcommand{\Gronwall}{Gr\"onwall}

\def\lhs{\textup{lhs}}
\def\rhs{\textup{rhs}}

\newif\ifshow

\def\uht{\uh_t}

\def\calF{\mathcal{F}}

\def\Ih{\mathcal{I}_h}

\usepackage{stmaryrd}
\newcommand{\jump}[1]{\llbracket #1 \rrbracket}
\newcommand{\avg}[1]{\{ \! \! \{ #1 \} \! \! \} }

\newcommand{\triangles}{\mathcal{T}_{h}}

\newcommand{\Ltwonorm}[2]{\|#1\|_{L^2(#2)}}

\newcommand{\dgnorm}[1]{\|#1\|_{\textup{dG}}}
\newcommand{\dgstarseminorm}[1]{|#1|_{\textup{dG,*}}}

\newcommand{\dgseminorm}[1]{| #1 |_{\textup{dG}}}

\newcommand{\dgseminormell}[2]{|#1|_{\textup{dG,#2}}}

\newcommand{\dgnormell}[2]{\|#1\|_{\textup{dG,#2}}}

\newcommand{\opnorm}[1]{\| #1 \|_{\textup{op}}}

\newcommand{\timeder}{\frac{\textup{d}}{\textup{d}t}}

\newcommand{\invconstant}{C_{\textup{inv}}}
\newcommand{\embconstant}{C_{\textup{emb}}}
\newcommand{\embconstantone}{C_{\textup{emb, 1}}}
\newcommand{\embconstanttwo}{C_{\textup{emb, 2}}}

\newcommand{\bupwind}{b_{h}^{\textup{upw}}}

\newcommand{\asip}{a_{h}^{\textup{sip}}}

\newcommand{\ph}{p^{h}}
\newcommand{\qh}{q^{h}}
\newcommand{\wh}{w^{h}}

\newcommand{\uh}{u^{h}}
\newcommand{\fh}{f^{h}}

\newcommand{\Vh}{V^q_{h}}

\newcommand{\phih}{\phi^{h}}

\def\pht{\ph_t}

\newcommand{\eI}{e^{I,u}}

\newcommand{\eh}{e^{h, u}}
\newcommand{\thh}{t^{*}_{h}}

\def\bfv{\boldsymbol{v}}
\def\bfn{\boldsymbol{n}}

\def\Gin{\Gamma_{\textup{in}}}
\def\Gout{\Gamma_{\textup{out}}}

\def\whone{\wh_1}
\def\whtwo{\wh_2}

\def\Vhqu{V_h^q}
\def\Vhqp{V_h^q}

\def\qp{q}
\def\qu{q}

\def\fhp{\fp^h}
\def\fhu{\fu^h}

\def\barh{\bar{h}}

\def\gA{g_{\textup{abs}}}

\def\gAh{g^h_{\textup{abs}}}

\def\calFh{\mathcal{F}_h}
\def\calFhint{\mathcal{F}^{\textup{int}}_h}
\def\calFhbnd{\mathcal{F}^{\textup{bnd}}_h}

\def\hF{h_F}

\def\nF{\boldsymbol{n}_F}

\def\nablah{\nabla_h}

\def\intF{\int_F}

\def\LtwoF{L^2(F)}

\def\Ahsip{A_h^{\textup{sip}}}

\def\Gh{G^h}

\def\Dmin{D_{\textup{min}}}
\def\Dmax{D_{\textup{max}}}

\def\barOmega{\overline{\Omega}}
\def\overlineOmega{\overline{\Omega}}
\def\overlineK{\overline{K}}

\def\epressure{e^p}

\def\intK{\int_K}

\def\Dzero{D_0}
\def\Done{D_1}

\def\Vhq{V_h^q}

\def\uhin{u^h_{\textup{in}}}
\def\ghin{u^h_{\textup{in}}}

\def\uin{u_{\textup{in}}}
\def\gin{u_{\textup{in}}}

\def\DzeroDone{D_0 D_1}

\def\LtwoGin{L^2(\Gin)}

\def\bfx{\boldsymbol{x}}

\def\nK{\boldsymbol{n}_K}

\def\sumK{ \sum_{K \in \triangles}}
\def\sumFint{ \sum_{F \in \calFhint}}
\def\sumFbnd{ \sum_{F \in \calFhbnd}}

\def\bfvnF {\bfv \cdot \nF}

\def\phihup{ \phih_{\textup{up}}}

\def\LtwoK{L^2(K)}

\def\intGin{\int_{\Gin}}

\def\invhF{\frac{1}{\hF}}

\def\hK{h_K}

\def\VGh{V^q_{\Gamma, h}}
\def\VGinh{V^q_{\Gin, h}}

\def\Npartial{N_{\partial}}

\def\card{\textup{card}}

\def\COmegaT{C(\overlineOmega \times [0,T])}

\def\ehp{e^{h,p}}
\def\ehu{e^{h, u}}
\def\ehpt{e_t^{h,p}}

\def\eIp{e^{I, p}}

\def\deltahp{\delta_h}

\def\eIu{e^{I, u}}

\def\openset{\mathcal{A}_h}

\def\ball{\mathcal{B}}

\newcommand{\tnorm}[1]{\vert\!\vert\!\vert #1 \vert\!\vert\!\vert}

\def\Czero{C_{0}}

\def\dGnorm{\textup{dG}}

\def\Gtop{\Gamma_{\textup{top}}}
\def\Gout{\Gamma_{\textup{out}}}

\usepackage{etoolbox}
\makeatletter
\apptocmd{\maketitle}{%
  \@addtoreset{equation}{section}%
}{}{}
\makeatother

\def\closedball{\overline{\ball}}

\def\Lambdah{\Lambda_h}

\def\deltah{\delta_h}
\makeindex             
\newenvironment{eq}
{\begin{equation}\begin{aligned}}
		{\end{aligned}\end{equation}
}

\begin{document}

\title*{Discontinuous Galerkin approximation of a nonlinear multiphysics problem arising in ultrasound-enhanced drug delivery}
\author{Femke de Wit and Vanja Nikoli\'c \orcidID{0000-1111-2222-3333}} 
\institute{Femke de Wit \at FlandersMake @ KU Leuven, Department of Mechanical Engineering, Celestijnenlaan 300, B-3001, Leuven, Belgium, \email{femke.dewit@kuleuven.be} \and Vanja Nikoli\'c \at Radboud University, Department of Mathematics, Heyendaalseweg 135, 6525 AJ Nijmegen, The Netherlands, \email{vanja.nikolic@ru.nl}}
%
%
\maketitle

\abstract{
 Motivated by simulations of ultrasound-enhanced drug delivery, this work presents the numerical analysis of a mathematical model that captures the influence of ultrasound waves on the diffusivity of the drug. The system under study consists of the Westervelt wave equation, accounting for the nonlinear propagation of ultrasound, coupled to a convection-diffusion equation modeling the drug concentration.  In particular, drug delivery is affected by ultrasound through a pressure-dependent diffusion coefficient. The Westervelt equation is supplemented by linear absorbing boundary conditions as a means of reducing spurious reflections off the boundaries of computational domains. For spatial discretization of this multiphysics system, we employ a discontinuous Galerkin approach on simplicial meshes.  Under suitable assumptions on the exact pressure and the mesh size, we first establish well-posedness, non-degeneracy, and optimal convergence rates in the energy norm for the semi-discrete pressure subproblem. The smallness of the semi-discrete pressure is then used to establish the well-posedness and convergence of the wave--convection-diffusion system under suitable regularity of the exact concentration. Finally, theoretical findings are illustrated through numerical experiments.
}

\section{Introduction} \label{sec:Introduction}
Recent research has shown that, among its many medical uses, ultrasound waves can improve targeted drug delivery in cancer treatments~\cite{mo2012ultrasound}. 
The technique is non-invasive and allows one to fine-tune the penetration depth and duration. Ultrasound has important thermal and mechanical effects \cite{azhdari}. Indeed, ultrasound waves are absorbed by the tissue, raising the temperature. This increase in temperature in turn increases diffusion and dilates blood vessels, improving blood flow. An increase in temperature also modifies the structure of the extracellular matrix (ECM), enhancing fluid flow. Furthermore, the pressure oscillations from the ultrasound can create microbubbles filled with gas that can oscillate in a stable or an unstable manner. When the oscillations are stable, they allow for improved fluid flow around the bubbles, called micro-streaming. Microstreaming can increase drug transport by enhancing the vascular permeability of the tissue. Cavitation also affects the ECM, reducing interstitial fluid pressure and again enhancing blood flow.  \\
\indent This work focuses on simulating the influence of ultrasound waves on drug concentration. In particular, we numerically investigate a mathematical multiphysics model consisting of an ultrasound wave model that captures nonlinear effects in wave propagation coupled to a reaction-diffusion equation with a pressure-dependent diffusion coefficient. This coefficient captures mainly the thermal effects of ultrasound. Although this system represents a simplification of the multiphysics interactions involved in ultrasound-targeted drug delivery, it allows us to gain the missing theoretical insight into such interactions and can thus be seen as a step toward considering more realistic systems. We perform the discretization in space using a discontinuous Galerkin (dG) approach, as this class of methods is known for their good stability and approximation properties as well as flexibility when it comes to handling different geometries; we refer, for example, to the books~\cite{dipietro, riviere2008discontinuous, cohen2017finite, CangianDongGeorgoulisHouston2017} as well as works~\cite{bause2023structure, antonietti2016high, grote2006discontinuous, han2019optimal, cockburn1999some, AnMa2018, houston2002discontinuous}, the review~\cite{antonietti2022mathematical}, and the references provided therein for further details. \\
\indent The modeling and simulation of ultrasound-enhanced drug delivery has been an emerging area of research; see, for example,~\cite{zhan2019towards, hariharan2017model, tian2024numerical, careaga2025westervelt}.  Numerical analysis of different models of ultrasound-enhanced drug delivery has been carried out in~\cite{ferreira2022drug, ferreirathird, azhdari2023drug}. Compared to the existing works, the present multiphysics model accounts for the inherent nonlinearity of ultrasound waves via the Westervelt wave equation. Novel contributions also include employing the non-conforming dG framework and considering absorbing acoustic boundary conditions as a means of reducing spurious reflections from the computational boundaries. The resulting wave--advection-diffusion system is sequentially coupled, meaning that the bulk of the numerical analysis concerns the nonlinear acoustic subproblem with absorbing conditions.  In recent years, significant advances have been made in the numerical analysis of Westervelt-type equations. Our analysis, in particular, builds upon the ideas from~\cite{antonietti2020high, abstractlipschitzpaper}; see also~\cite{careaga2026finite, dorich2024robust}.  The core of the arguments is adapted from the thesis~\cite{thesisFemke}. We refer the readers to, for example,~\cite{dorich2025strong, gomez2025asymptotic} for related results on the numerical analysis of wave models in nonlinear acoustics.
\subsection{Mathematical model: A nonlinear wave--convection-diffusion system}
Let $\Omega \subset \R^d$, $d \in \{1,2,3\}$ be an open and bounded set and let $T>0$ denote the time horizon. To model the propagation of ultrasound waves, including nonlinear effects, we employ the \textit{Westervelt equation}~\cite{westerveltoriginal} with strong damping:
\begin{align}\label{eq: westervelt introduction}
	\left((1+\kappa p)\pt\right)_t- \csq \Delta p - \beta \Delta \pt = \fp \qquad \text{in } \Omega \times (0,T).
\end{align}
In \eqref{eq: westervelt introduction}, $p=p(x,t)$ denotes the acoustic pressure, $c>0$ the speed of sound, $\beta>0$ the strong damping parameter, also known as the sound diffusivity,  $\kappa \in \R$ the nonlinearity coefficient, and $\fp \in \LtwoTLtwo$ the acoustic source function.  To minimize spurious reflections from the computational boundaries, we employ absorbing-type boundary conditions on the whole boundary $\Gamma = \partial \Omega$:
\begin{equation}
		\alpha p_{t} + \csq \nabla p \cdot \bfn + \beta \nabla p_{t} \cdot \bfn = \gA \qquad \text{in } \Gamma \times (0,T),
\end{equation}
where $\alpha>0$, and $\bfn$ denotes the outer normal to $\Gamma$. The first-order Engquist--Majda absorbing boundary conditions (see~\cite{abcsengquist}) are recovered for $\alpha=c$ and $\gA \equiv 0$. We allow for a boundary source function $\gA \in L^2(0,T; \LtwoG)$, which will be useful for verifying empirical convergence rates in numerical experiments.\\
\indent  We model drug concentration $u$ using the following \textit{convection-diffusion equation} with a pressure-dependent diffusion coefficient:
\begin{equation} \label{concentration eq}
	u_{t} + \nabla \cdot (\bfv u) - \nabla \cdot (D(p) \nabla u) = \fu.
\end{equation}
Here $\nabla \cdot (\bfv u)$ models the convection of the drug concentration with a given constant convective velocity $\bfv \in \R^{d}$. 
 The term $\nabla \cdot (D(p) \nabla u)$ models the pressure-dependent diffusion of the drug concentration. We assume the linear dependence of $D$ on the pressure of the form 
\begin{eq}
	 D(p) = D_{0}(1+D_{1}p)
\end{eq}
for some $D_0>0$ and $D_1 \in \R$.  The analysis below can also be extended in a straightforward manner to the setting of $D= D_0(1+D_2(p))$, where $D_2$ depends polynomially on $p$ and $D_2(0)=0$. \\
\indent Furthermore, in \eqref{concentration eq}, $\fu \in \LtwoTLtwo$ denotes the source of the drug concentration. On the inflow part of the boundary
 \begin{equation} \label{def Gin}
 	\Gin = \left \{x \in \partial \Omega: \ \bfv \cdot \bfn(x)< 0 \right \}
 \end{equation}
 we supplement the concentration equation with the condition
 \begin{eq}
 (u \bfv - D(p)\nabla u) \cdot \bfn = \gin \bfv \cdot \bfn, \qquad \text{on } \Gin \times (0,T),
 \end{eq}
\noindent where $\gin \in L^2(0,T; L^2(\Gin))$ is a given boundary source of drug.  On the rest of the boundary
 \begin{equation} \label{def Gout}
	\Gout= \partial \Omega \setminus \Gin = \left \{x \in \partial \Omega: \ \bfv \cdot \bfn(x) \geq 0  \right \}
\end{equation}
we impose homogeneous Neumann (natural) boundary conditions 
\[
-D(p)\nabla u \cdot \bfn =0, \qquad \text{in } \Gout \times (0,T).
\]
 \indent Combining the Westervelt equation with the absorbing boundary conditions and the convection-diffusion equation for the drug concentration, and supplementing with initial data, we obtain: 
\begin{equation}\label{exact multiphysics problem} \tag{P}
	\begin{cases}
	\left(	(1+\kappa p) \pt \right)_t -\csq \Delta p - \beta \Delta p_{t} = f_{p} \qquad & \textup{ in }  \Omega \times (0,T) ,\\[1mm]
		\alpha p_{t} + \csq \nabla p \cdot \bfn + \beta \nabla p_{t} \cdot \bfn = \gA & \textup{ on } \Gamma \times (0,T),\\[2mm]
			u_{t} + \nabla \cdot (\bfv u) - \nabla \cdot (D(p) \nabla u) = f_{u} & \textup{ in } \Omega \times (0,T),\\[1mm]
		(u v - D(p)\nabla u)\cdot \bfn = \gin \bfv \cdot \bfn & \textup{ on } \Gin \times (0,T),\\[1mm]
		- D(p)\nabla u\cdot \bfn = 0 & \textup{ on }  \Gout \times (0,T),\\[2mm]
		(p,p_{t},u) = (p_{0},p_{1},u_{0}) & \textup{ at } \Omega  \times \{0\}.
	\end{cases}
\end{equation}
We assume that the given initial data and source functions are such that the exact problem \eqref{exact multiphysics problem} is well-posed with a sufficiently regular solution. More precisely, we assume that there exists a unique $(p, u) \in \Xp \times \Xu$ which solves \eqref{exact multiphysics problem}, where
\begin{eq}\label{def Xp Xu}
	\Xp =&\, W^{1,\infty}(0,T;H^{q+1}(\Omega)) \cap H^{2}(0,T; H^q(\Omega) \cap L^{\infty}(\Omega)), \qquad q \geq \begin{cases}
		\, 1, \quad \text{if } d \in \{1,2\}, \\
		\, 2, \quad \text{if } d = 3,
	\end{cases} \\
	\Xu =&\, H^{1}(0,T;C^{1}(\overline{\Omega})\cap H^{q+1}(\Omega)), 
\end{eq}
and that there exists $r>0$, such that
\begin{eq} \label{non-degeneracy}
|\kappa| \| p \|_{\COmegaT} < r < 1,
\end{eq}
which implies, in particular, that $1+\kappa p > 0$
and thus that the Westervelt equation does not degenerate. Although this precise well-posedness result does not seem available in the literature, there are closely related results that we expect could be adapted to obtain the needed regularity. In particular, well-posedness of the damped Westervelt equation with (nonlinear) absorbing conditions and $\gA=0$ follows from~\cite{westerveltderivation, simonett2017westerveltabcszeroorder}; in~\cite[Theorem 1.1]{simonett2017westerveltabcszeroorder} the following regularity is obtained:
\begin{eq}
    p \in W^{2,\ell}({0,T}; L^\ell(\Omega)) \cap W^{1, \ell}({0,T}; W^{2,\ell}(\Omega)),\quad  {\textup{for all }\:} \ell >3
\end{eq}
with a zero source term and under suitable domain regularity and compatibility conditions.
A Westervelt--convection-diffusion model that includes non-local acoustic attenuation and temperature effects is studied in~\cite{careaga2025westervelt}. In the well-posedness theory, the non-degeneracy condition \eqref{non-degeneracy} is guaranteed by assuming sufficiently small pressure and temperature data. \\[2mm]
\noindent \textbf{Notation}. We use $\lhs \lesssim \rhs$ and $\lhs \gtrsim \rhs $ to denote $\lhs \leq C \cdot \rhs $ and $\lhs \geq C \cdot  \rhs$, where $C>0$ is a constant that does not depend on the discretization parameter. 
\section{Discontinuous Galerkin semi-discretization of the system} \label{sec: dG discretization} 
Let $\triangles$ be a quasi-uniform and shape-regular simplicial mesh that partitions $\Omega$. We define the mesh size as 
\begin{eq}
	h =  \max_K \hK,
\end{eq}
where $\hK$ denotes the diameter of the element $K \in \triangles$. Let $\calF_h$ denote the set of all faces, with $\calFhint$ being the set of interior faces and $\calFhbnd$ boundary faces, so that
	\[
	\calFh = \calFhint \cup \calFhbnd.
	\]
	 For all $F \in \calFh$, we define the local length scale $\hF$ in dimension $d \geq 2$ to be the diameter of the face $F$. In dimension 1, we set $\hF= \min (h_{K_1}, h_{K_2})$ if $F \in \calFhint$ with $F= \partial K_1 \cap \partial K_2$ for some elements $K_{1}, K_{2} \in \triangles$. We set $\hF=h_K$ if $F \in \calFhbnd$ with $F= \partial K \cap \partial \Omega$.\\
	\indent Consider an interior face $F \in \calFhint$ such that $F=\partial K_{1} \cap \partial K_{2}$. On this face, we define the normal $\bfn_{F}= \bfn_{K_{1}}=- \bfn_{K_{2}}$ and introduce the jump and average operators:
	\begin{equation}
		\begin{split}
			\jump{\phi} = \phi_{\vert K_1} - \phi_{\vert K_2},  \qquad \avg{\phi} = \frac{\phi_{\vert K_1}+\phi_{\vert K_2}}{2}.
		\end{split}
	\end{equation}
	If $F \in \calFhbnd$, we set $\jump{\phi} = \phi$ and   $ \avg{\phi} = \phi$.  \\
	\indent Let $\mathbb{P}^{q}(K)$ be the space of polynomials of degree $q \geq 1$ on $K \in \triangles$. The space of approximate solutions on $\triangles$ is defined as the broken polynomial space
	\begin{eq}
		\Vhq = \left \{\phih \in \Ltwo  \mid   \phih_{\vert K} \in \mathbb{P}^{q}(K),\ \forall K \in \triangles \right \}.
	\end{eq}
    Given a mesh $\triangles$, we define the broken Sobolev space $W^{s,\ell}(\triangles)$ 
for $s \geq 1$ and $1 \leq \ell \leq \infty$ by
\begin{equation}
    W^{s,\ell}(\triangles) = \left\{ \phi \in L^{\ell}(\Omega) \mid \phi_{\vert K} 
    \in W^{s,\ell}(K), \ \forall K \in \triangles \right\}
\end{equation}
with the norm
\begin{equation}
    \|\phi\|_{W^{s,\ell}(\triangles)} = \begin{cases} \left( \displaystyle\sum_{K \in \triangles} 
    \|\phi\|^{\ell}_{W^{s,\ell}(K)} \right)^{1/\ell} & \text{if } \ell < \infty, \\[6pt]
    \displaystyle\sup_{K \in \triangles} \|\phi\|_{W^{s,\infty}(K)} & \text{if } \ell = \infty.
    \end{cases}
\end{equation}
Furthermore, we introduce the broken gradient $\nabla_h: H^1(\triangles) \rightarrow [\Ltwo]^d$ by
\begin{equation}
	(\nablah v)_{\vert K} = \nabla (v_{\vert K}), \quad \forall K \in \triangles
\end{equation}
 for $v \in H^1(\triangles)$; cf.~\cite[Def.~1.2.1]{dipietro}. 
 \par To discretize the $-c^2 \Delta p$ term in the wave equation and the diffusive terms in the pressure and concentration equations, we employ the \emph{Symmetric Interior Penalty discontinuous Galerkin} (SIP-dG) form $\asip: \bigl(H^2(\Omega)+\Vhq\bigr) \times H^1(\triangles) \rightarrow \R$ given by 
 \begin{eq}
 	&\asip(D(\cdot); \phih, \wh) \\
 	= & \begin{multlined}[t] \intO D(\cdot) \nablah \phih \cdot \nablah \wh \dx 
 		- \sum_{F \in \calFhint} \intF \left( \jump{\wh}\avg{D(\cdot) \nablah \wh }\cdot \nF + \jump{\phih}\avg{D(\cdot) \nablah \wh }\cdot \nF \right)  \dG \\ 
 		+ \sum_{F \in \calFhint} \int_{F} \frac{\sigma\eta}{\hF} \jump{\phih} \jump{\wh} \dG,
 	\end{multlined}
 \end{eq}
 where $\eta, \sigma>0$ denote the stabilization parameters. If $D \equiv 1$ (as is the case for the Westervelt equation), we simply write $\asip(\phih, \wh)$ and set $\sigma=1$; this approach to discretizing the Westervelt equation has also been adopted in~\cite{antonietti2020high}.\\
 \indent Following, e.g.,~\cite[Sec.~4.2.3]{riviere2008discontinuous}, the convective term in the concentration equation is discretized with the following \emph{upwind} form $\bupwind: \bigl(H^2(\Omega)+\Vhq\bigr) \times H^1(\triangles) \rightarrow \R$:
 \begin{align}\label{def: bupwind}
 	\bupwind(\bfv;\phih,\wh) = -\intO  \phih \left( \bfv \cdot \nablah \wh \right) \dx + \sum_{F \in \calFhint} \int_{F}  \phih_{\textup{up}}  \bfv \cdot \nF \jump{\wh} \dG + \int_{\Gout} \phih \wh \bfv \cdot \nF \dG.
 \end{align}
 Here $\phih_{\textup{up}}$ denotes the upwind value of a function $\phih \in \Vh$ defined as
 \begin{eq}
 	\phihup = \begin{cases}
 		\phih_{\vert K_1}\ \text{  if  } \bfv \cdot \nF \geq 0, \\
 		\phih_{\vert K_2}\ \text{  if  } \bfv \cdot \nF < 0, \\
 	\end{cases}\quad \forall F= \partial K_1 \cap \partial K_2.
 \end{eq}
 The semidiscrete wave--convection-diffusion problem consists of finding $(\ph,\uh) \in C^{2}([0,T];\Vhq) \times C^{1}([0,T];\Vhq)$, such that
 \begin{align}\label{dG semi-discrete wave-adv diff system} \tag{$P_h$}
 	\begin{cases}
 		\int_{\Omega}((1+\kappa \ph)\pht)_t  \whone \dx +  \asip(\csq \ph+ \beta \pht, \whone) + \int_{\Gamma} \alpha \pht \whone \dG 
 		= \int_{\Omega} \fhp \whone \dx + \int_{\Gamma} \gAh \whone \dG ,\\[3mm]
 		\int_{\Omega} \uht \whtwo \dx + \asip(D(\ph); \uh,\whtwo) + \bupwind(\bfv;\uh,\whtwo)= \int_{\Omega} \fhu \whtwo \dx - \int_{\Gin} \uhin \whtwo \, \bfv \cdot \bfn \dG,\\[3mm]
 		\text{ for all }(\whone,\whtwo) \in \Vhq \times \Vhq \text{ and at all times } t \in [0,T], \text{with} \\[2mm]
 		(\ph(0),\ph_{t}(0), \uh(0)) = (\ph_{0},  \ph_1, \uh_0) \in (\Vh)^3.
 	\end{cases}
 \end{align}
 The functions  $\fhp \in  C([0,T]; \Vhq)$, $\gAh \in C([0,T]; \VGh)$, and $\fhu \in C([0,T]; \Vhq)$, $\ghin \in C([0,T]; \VGinh)$ are assumed to approximate $\fp$, $\gA$, and $\fu$, $\gin$ with the following accuracy:
 \begin{eq} \label{approx properties source terms}
 	&	\|\fp - \fhp\|_{L^{2}(0,T;L^{2}(\Omega))} \lesssim h^{q}, \quad 	\|\gA- \gAh\|_{L^{2}(0,T;L^{2}(\Gamma))} \lesssim h^{q},  \\ & \|\fu - \fhu\|_{L^{2}(0,T;L^{2}(\Omega))} \lesssim h^{q}, \quad \| \gin- \ghin\|_{L^2(0,T; \LtwoGin)} \lesssim h^{q},
 \end{eq}
 where $\VGh$ and $\VGinh$ denote the trace spaces of $\Vh$ on $\Gamma$ and $\Gin$, respectively: 
 \begin{eq}
 	\VGh =&\, \{ \gAh \in \LtwoG:  \gAh = \phih_{\vert \Gamma} \text{  for some  } \phih \in \Vh\}, \\
 	 \VGinh =&\, \{ \ghin \in \LtwoG:  \ghin = \phih_{\vert \Gin} \text{  for some  } \phih \in \Vh\}.
 \end{eq}
 \indent We see that in \eqref{dG semi-discrete wave-adv diff system}, the wave and convection-diffusion equations are coupled sequentially, so we can first analyze the wave subproblem and then employ the obtained result for analyzing the whole system. 
 \subsection{Auxiliary results} 
We next recall some standard results from the dG theory that will be used in the analysis below. 
	\subsubsection{Inverse and trace inequalities}
We begin by recalling discrete inverse and trace inequalities. Below we use the notion of the mesh regularity parameter as introduced in~\cite[Def.~1.38]{dipietro}.
	\begin{lemma}[see {\cite[Lemmas~1.46 and 1.50 and Remark 1.47]{dipietro}}]\label{lem: inverse ineq}
	Let the assumptions made on $\triangles$ in this section hold.	The following inverse estimate holds for $1 \leq \ell,\ell' \leq \infty$ and $\phih \in \Vh$, $K \in \triangles$:
		\begin{eq}	\label{inverse inequality}
			\| \phih \|_{L^{\ell}(K)} \leq \invconstant \hK^{d(1/\ell-1/\ell')}\| \phih \|_{L^{\ell'}(K)},
		\end{eq}
		where the constant $\invconstant$ depends on the mesh regularity parameter, $d$, $q$, $\ell$, and $\ell'$. Further, the following discrete trace inequality holds for $\phih \in \Vhq$ and face $F_K \in \calFh$ of a simplex $K \in \triangles$:
		\begin{eq} \label{face element trace ineq}
			\Ltwonorm{\phih}{F_K} \leq \Ctr h_K^{-1/2} \Ltwonorm{\phih}{K},
		\end{eq}
        	where the constant $\Ctr $ depends on the mesh regularity parameter, $d$, and $q$.
	Moreover,
		\begin{equation}\label{boundary element trace ineq}
				\Ltwonorm{\phih}{\partial K} \leq \Ctr h_K^{-1/2} \Npartial^{-1/2} \Ltwonorm{\phih}{K},
		\end{equation}
		where $\Npartial= \displaystyle \max_{K \in \triangles} \card(F_K) = d+1$ is the maximum number of mesh faces composing the boundary of mesh elements.
	\end{lemma}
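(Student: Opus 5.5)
The plan is the standard scaling argument on a reference simplex, following the route of \cite{dipietro}. Since $\triangles$ is shape-regular, each $K\in\triangles$ is the image of a fixed reference simplex $\hat K$ under an affine map $T_K(\hat{\bfx}) = B_K\hat{\bfx}+b_K$. Its Jacobian satisfies $|\det B_K|\simeq \hK^d$, with $\|B_K\|\lesssim \hK$ and $\|B_K^{-1}\|\lesssim \hK^{-1}$. The hidden constants depend only on the mesh regularity parameter.

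For $\phih\in\Vhq$, the pullback $\hat\phi=\phih_{\vert K}\circ T_K$ lies in the finite-dimensional space $\mathbb{P}^q(\hat K)$. On this space all norms are equivalent, so
\[
\|\hat\phi\|_{L^{\ell}(\hat K)}\le \hat C\,\|\hat\phi\|_{L^{\ell'}(\hat K)},
\]
where $\hat C$ depends only on $d$, $q$, $\ell$ and $\ell'$. The change of variables gives $\|\phih\|_{L^\ell(K)}=|\det B_K|^{1/\ell}\|\hat\phi\|_{L^\ell(\hat K)}$, with the convention $1/\infty=0$. Combining the two yields
\[
\|\phih\|_{L^\ell(K)}\le \hat C\,|\det B_K|^{1/\ell-1/\ell'}\|\phih\|_{L^{\ell'}(K)}.
\]
The Jacobian bounds then give the factor $\hK^{d(1/\ell-1/\ell')}$, up to a regularity-dependent constant, in both signs of the exponent. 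This proves \eqref{inverse inequality}.

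For the face inequality \eqref{face element trace ineq}, take a face $F_K$ of $K$ and its reference face $\hat F=T_K^{-1}(F_K)$. Equivalence of norms on $\mathbb{P}^q(\hat K)$ gives $\|\hat\phi\|_{L^2(\hat F)}\le \hat C\|\hat\phi\|_{L^2(\hat K)}$. The face measure scales as $|F_K|/|\hat F|\simeq \hK^{d-1}$ (here shape regularity gives $\hF\simeq\hK$), and the element measure scales as $\hK^d$. Transforming both norms back to $K$ therefore produces exactly the factor $\hK^{-1/2}$.

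The boundary inequality \eqref{boundary element trace ineq} follows by summing the squared face bound over the $\card(F_K)=d+1=\Npartial$ faces of $K$. I expect the only delicate point to be the bookkeeping of the factor $\Npartial$. Summation naturally gives $\|\phih\|_{L^2(\partial K)}\le \Npartial^{1/2}\Ctr' \hK^{-1/2}\|\phih\|_{L^2(K)}$, so the stated form with $\Npartial^{-1/2}$ is a matter of how $\Ctr$ is defined. I would follow \cite[Lemma~1.46, Remark~1.47]{dipietro}. There the trace constant is normalized through the sharp polynomial trace inequality on simplices, $\|\phih\|_{L^2(F)}\le \bigl((q+1)(q+d)/d\bigr)^{1/2}(|F|/|K|)^{1/2}\|\phih\|_{L^2(K)}$, together with the mesh-regularity bounds on $|F|/|K|$. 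The $\Npartial$ factor is then absorbed into the definition of $\Ctr$, and I would redefine $\Ctr$ as the larger of the two constants so that both trace inequalities hold with the same constant.
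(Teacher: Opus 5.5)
Your proposal is correct and is essentially the argument behind the results the paper cites from Di Pietro--Ern (the paper itself gives only the citation). Affine scaling to a reference simplex plus norm equivalence on $\mathbb{P}^q(\hat K)$ yields \eqref{inverse inequality} and \eqref{face element trace ineq}, and summing over the $d+1$ faces yields \eqref{boundary element trace ineq}. Your handling of the $\Npartial$ factor is also sound, though your attribution is slightly off: summation naturally gives $\Npartial^{1/2}$, which is the form in Di Pietro--Ern, where it is kept explicit rather than absorbed into $\Ctr$, and it is the form the paper actually uses in \eqref{est2 coerc}. So the stated $\Npartial^{-1/2}$ is best read as a typo, although it does hold formally after enlarging $\Ctr$ by the fixed factor $\Npartial = d+1$, as you propose.
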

	In what follows, we will also need the following continuous trace inequality.
	\begin{lemma}[Continuous trace inequality, see~{\cite[Lemma 1.49]{dipietro}}] \label{lem: cont trace ineq}
	Let the assumptions made on $\triangles$ in this section hold.	Then, for all $v \in H^1(\triangles)$, all $K \in \triangles$, and all $F \in \calFh$,
	\begin{equation}  \label{cont trace ineq}
		\|v\|^2_{L^2(F)} \leq C_{\mathrm{cti}}\left(2\|\nabla v\|_{\LtwoK} + d\, \hK^{-1}\|v\|_{L^2(K)}\right)\|v\|_{L^2(K)},
		\label{eq:cont_trace}
	\end{equation}
			where the constant $C_{\mathrm{cti}}$ depends on the mesh regularity parameter.
	\end{lemma}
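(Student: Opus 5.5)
This is the standard continuous trace inequality on a simplex, and the plan is to prove it with the divergence theorem applied to a vector field attached to the face $F$.

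\emph{Setup.} Fix $K \in \triangles$ and a face $F \subset \partial K$; if $F$ is not a face of $K$ there is nothing to prove. By density of $C^1(\overlineK)$ in $H^1(K)$ and continuity of the trace, it suffices to treat $v \in C^1(\overlineK)$. Let $\bfx_F$ be the vertex of $K$ opposite to $F$, and define
\[
\boldsymbol{\sigma}_F(\bfx) = \frac{|F|}{d\,|K|}(\bfx - \bfx_F).
\]
Then $\nabla\cdot \boldsymbol{\sigma}_F = |F|/|K|$. On the other faces $F'$ of $K$ we have $(\bfx-\bfx_F)\cdot \nK = 0$, since $\bfx_F$ lies in the hyperplane of $F'$. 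On $F$ itself, $(\bfx-\bfx_F)\cdot \nK = d|K|/|F|$ (the height of $K$ over $F$), so $\boldsymbol{\sigma}_F\cdot\nK = 1$ on $F$.

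\emph{Key identity.} Apply the divergence theorem to $v^2\boldsymbol{\sigma}_F$. Because the flux vanishes on all faces except $F$, this gives
\[
\|v\|^2_{L^2(F)} = \intK \nabla\cdot(v^2 \boldsymbol{\sigma}_F)\dx = \frac{|F|}{|K|}\|v\|^2_{\LtwoK} + \intK 2v\,\nabla v\cdot \boldsymbol{\sigma}_F \dx .
\]

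\emph{Bounding the terms.} On $K$ we have $|\bfx - \bfx_F| \le \hK$, so $\|\boldsymbol{\sigma}_F\|_{L^\infty(K)} \le |F|\hK/(d|K|)$. Cauchy--Schwarz then yields
\[
\|v\|^2_{L^2(F)} \le \frac{|F|}{d|K|}\left(2\hK\|\nabla v\|_{\LtwoK} + d\,\|v\|_{\LtwoK}\right)\|v\|_{\LtwoK}.
\]
It remains to write the prefactor as $C_{\mathrm{cti}}\hK^{-1}$. Since $\hK\, d|K|/|F| = \hK/\textup{height}_F$, we need the height of $K$ over $F$ to be bounded below by a multiple of $\hK$. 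Shape regularity supplies this: the height is at least $2\rho_K$, where $\rho_K$ is the inradius, and $\rho_K \gtrsim \hK$. Hence $|F|/(d|K|) \le C_{\mathrm{cti}}\hK^{-1}$, and therefore
\[
\|v\|^2_{L^2(F)} \le C_{\mathrm{cti}}\left(2\|\nabla v\|_{\LtwoK} + d\,\hK^{-1}\|v\|_{\LtwoK}\right)\|v\|_{\LtwoK},
\]
with $C_{\mathrm{cti}}$ depending only on the mesh regularity parameter.

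\emph{Main obstacle.} The computation itself is routine. The step needing care is the geometric one: checking that $\boldsymbol{\sigma}_F$ has zero normal component on the other faces and unit normal component on $F$, and then converting $|F|/|K|$ into $\hK^{-1}$ through the mesh regularity parameter, which is what fixes the dependence of $C_{\mathrm{cti}}$.
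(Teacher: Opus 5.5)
Your argument is correct and matches the proof of the result the paper cites without proof (Lemma 1.49 of Di Pietro--Ern): apply the divergence theorem to $v^2\frac{|F|}{d|K|}(\bfx-\bfx_F)$, then use shape regularity to bound $\hK$ divided by the height of $K$ over $F$, which produces the factors $2$ and $d$ in the statement (on simplicial meshes the sub-simplex used there is $K$ itself). One small correction: for $F\not\subset\partial K$ the inequality is not vacuous but false in general, so the statement should be read with $F$ a face of $K$, as in the cited reference, rather than that case being dismissed as trivial.
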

	\subsubsection{Discrete embeddings}
For the numerical analysis, we require a discrete counterpart of the embedding $W^{1,\ell}(\Omega) \hookrightarrow L^s(\Omega) $, where 
	\begin{eq} \label{assumptions s}
1 \leq s \leq  \frac{\ell d}{d-\ell} \quad \text{ if } \ 1 \leq \ell <d, \quad  \ell \leq s < \infty \quad \text{ if }\  d= \ell \quad \text{ and }\   s = \infty \quad \text{ if  } \ell>d;
	\end{eq}
	see~\cite[Corollary 9.14]{brezis2011functional}. In particular, we are interested in the case $s=3$. To establish this discrete counterpart, we introduce the dG-norm
	\begin{eq}
		\dgnormell{\phih}{$\ell$} = \left( \sum_{K \in \triangles} \| \nabla \phih \|_{L^{\ell}(K)}^{\ell} + \sum_{F \in \calFh} \frac{1}{\hF^{\ell-1}} \| \jump{\phih}\|_{L^{\ell}(F)}^{\ell}  \right)^{1/\ell}
	\end{eq}
for $1 \leq \ell < \infty$, and the semi-norm
	\begin{eq}
	\dgseminormell{\phih}{$\ell$} = \left( \sum_{K \in \triangles} \| \nabla \phih \|_{L^{\ell}(K)}^{\ell} + \sum_{F \in \calFhint} \frac{1}{\hF^{\ell-1}} \| \jump{\phih}\|_{L^{\ell}(F)}^{\ell}  \right)^{1/\ell}.
\end{eq}
 When $\ell=2$, we simply write $\dgnorm{\cdot}$ and $\dgseminorm{\cdot}$ in place of $\dgnormell{\cdot}{2}$ and $\dgseminormell{\cdot}{2}$, respectively. We first recall the following discrete embedding result.

\begin{lemma}[see~{\cite[Thm 5.3]{dipietro}}]\label{lem: discr embedding full dG norm}
		Under the assumptions on $\triangles$ and $\Vhq$ from Section~\ref{sec: dG discretization},	for  $1 \leq \ell <\infty$ and all $s$ satisfying 
			\begin{eq} \label{assumptions s}
			1 \leq s \leq  \frac{\ell d}{d-\ell} \quad \text{ if } \ 1 \leq \ell <d, \quad  1 \leq s < \infty \quad \text{ if }\  d\leq \ell <\infty, \quad \text{ and }\  1 \leq s \leq \infty \quad \text{ if  } d=1,
		\end{eq}
				there exists a positive constant $\embconstant$ depending on $s$, $\ell$, polynomial degree $q$, and the mesh regularity parameter, such that 
				\begin{eq}
						\| \phih\|_{L^{s}(\Omega)} \leq \embconstantone 	\dgnormell{\phih}{$\ell$}
					\end{eq}
for all $\phih \in \Vh$.          
\end{lemma}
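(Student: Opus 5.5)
The estimate is the discrete Sobolev embedding of Di Pietro and Ern, so I would reproduce the standard argument rather than build a new one. The route has three steps: approximate $\phih$ by a conforming function, apply the continuous embedding to that function, and control the difference through the jump terms in $\dgnormell{\phih}{$\ell$}$. Note that the full norm also contains the boundary faces, $\hF^{1-\ell}\|\phih\|^{\ell}_{L^\ell(F)}$ for $F \in \calFhbnd$. This is what makes it a norm on $\Vh$ without a mean-value correction, and it is the quantity we will use to control the conforming part.

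\textbf{Step 1 (conforming approximation).} Let $\mathcal{I}_h^{\textup{av}}\phih$ be the nodal averaging (Oswald) interpolant into continuous piecewise polynomials of degree $q$. Instead of averaging at boundary Lagrange nodes, I would set the value there to zero, so that $\mathcal{I}_h^{\textup{av}}\phih \in W^{1,\ell}_0(\Omega)$. The standard local estimate, obtained from the inverse inequality \eqref{inverse inequality} and shape regularity, is
\[
\sum_{K}\Bigl(h_K^{-\ell}\|\phih-\mathcal{I}_h^{\textup{av}}\phih\|^{\ell}_{L^\ell(K)}+\|\nabla(\phih-\mathcal{I}_h^{\textup{av}}\phih)\|^{\ell}_{L^\ell(K)}\Bigr)\lesssim \sum_{F\in\calFh}\hF^{1-\ell}\|\jump{\phih}\|^{\ell}_{L^\ell(F)} .
\]
The proof is a node-by-node comparison: at each node, the difference between a local value and the average is a sum of jumps across the faces sharing that node. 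These node differences are converted to face $L^\ell$ norms by norm equivalence on the reference element. Boundary faces enter in the same way, with $\jump{\phih}=\phih$.

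\textbf{Step 2 (embed the pieces).} Split $\phih=\mathcal{I}_h^{\textup{av}}\phih+(\phih-\mathcal{I}_h^{\textup{av}}\phih)$. For the conforming part, the Sobolev embedding $W^{1,\ell}_0\hookrightarrow L^s$ together with Poincaré for zero trace gives $\|\mathcal{I}_h^{\textup{av}}\phih\|_{L^s}\lesssim\|\nabla \mathcal{I}_h^{\textup{av}}\phih\|_{L^\ell}$. This is bounded by $\dgnormell{\phih}{$\ell$}$ using Step 1 and the triangle inequality. When $\ell\ge d$, every finite $s$ is allowed; since $\Omega$ is bounded, $L^{s}$ for large $s$ controls smaller ones. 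When $d=1$, $W^{1,\ell}\hookrightarrow L^\infty$ covers $s=\infty$.

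For the nonconforming part I would apply the inverse inequality \eqref{inverse inequality} elementwise to pass from $L^\ell$ to $L^s$, which gives
\[
\|\phih-\mathcal{I}_h^{\textup{av}}\phih\|_{L^s(K)}\lesssim h_K^{d(1/s-1/\ell)}\|\phih-\mathcal{I}_h^{\textup{av}}\phih\|_{L^\ell(K)} .
\]
Summing over elements, with quasi-uniformity and the embedding $\ell^\ell\hookrightarrow\ell^s$ for sequences when $s\ge\ell$, yields $\lesssim h^{1+d/s-d/\ell}(\sum_K h_K^{-\ell}\|\cdot\|^\ell_{L^\ell(K)})^{1/\ell}$. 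The exponent $1+d/s-d/\ell$ is nonnegative exactly under the admissibility condition \eqref{assumptions s}, so this term is also $\lesssim\dgnormell{\phih}{$\ell$}$. The case $s<\ell$ follows from Hölder on the bounded domain.

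\textbf{Main obstacle.} The delicate step is the interpolation estimate in Step 1 in a general $L^\ell$ setting, including the zero boundary modification. This is where the mesh regularity and polynomial degree enter the constant. I would handle it by working on patches of elements sharing a node and using finite-dimensionality of $\mathbb{P}^q$ on the reference simplex. Since the statement is exactly \cite[Thm 5.3]{dipietro}, one may in the end simply cite that reference for this step.
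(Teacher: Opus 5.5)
Your argument is correct, but it takes a different route from the one behind the statement. The paper gives no proof of its own and only cites \cite[Thm 5.3]{dipietro}, where the estimate is proved by a BV argument:
\begin{itemize}
\item[(i)] Extending $\phih$ by zero outside $\Omega$ gives a function in $BV(\mathbb{R}^d)$ whose total variation is exactly $\|\phih\|_{\textup{dG},1}$; the boundary faces are precisely the jumps to zero.
\item[(ii)] The inequality $\|w\|_{L^{d/(d-1)}(\mathbb{R}^d)} \lesssim |w|_{BV(\mathbb{R}^d)}$ then gives the case $\ell=1$.
\item[(iii)] For $1<\ell<d$ this is applied to $w=|\phih|^{\alpha}$ with $\alpha=\ell(d-1)/(d-\ell)$. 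H\"older's inequality and the discrete trace inequality bound the broken gradient and the face jumps of $w$ by $\|\phih\|^{\alpha-1}_{L^{s}(\Omega)}\|\phih\|_{\textup{dG},\ell}$, with $s=\ell d/(d-\ell)$, and one divides.
\item[(iv)] The remaining cases follow from H\"older's inequality on $\Omega$ and on the dG norm itself.
\end{itemize}
You instead build a conforming approximant (Oswald averaging with zero boundary nodes), use the classical embedding of $W^{1,\ell}_0(\Omega)$, and absorb the nonconforming remainder by scaling.

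Your route uses only textbook tools and makes the admissible range of $s$ appear transparently as the condition $1+d/s-d/\ell\ge 0$. It also handles the case $d=1$, $s=\infty$ of the statement directly. However, it needs a matching simplicial mesh (or a matching submesh), plus the usual patch-connectivity argument at interior and boundary nodes, to define and estimate the averaging operator. The BV route needs no conforming approximant and works verbatim on the possibly nonmatching admissible meshes of \cite{dipietro}. Its $\ell=1$ step also holds for arbitrary broken $W^{1,1}$ functions, not only piecewise polynomials.

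One small improvement to your summation step: quasi-uniformity is not needed there. The power of $h_K$ that appears is $s(1+d/s-d/\ell)\ge 0$, so it can be bounded by the same power of $\mathrm{diam}(\Omega)$. The constant then depends only on shape regularity, as the statement claims.
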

In our analysis setting, with absorbing acoustic conditions, we will need to combine Lemma~\ref{lem: discr embedding full dG norm} with the following bound.
\begin{lemma} \label{lemma: fulld dG norm estimate}
	Let $\ell  \in [1,2)$. Then
			there exists a positive constant $\embconstanttwo$ depending on $s$, $\ell$, polynomial degree $q$, and the mesh regularity parameter, such that 
			\begin{eq}
					\dgnormell{\phih}{$\ell$}^\ell \leq \embconstanttwo (\dgseminorm{\phih}^\ell+{h^{1-\ell}}\|\phih\|^\ell_{\LtwoG})
			\end{eq}
for all $\phih \in \Vh$.			
\end{lemma}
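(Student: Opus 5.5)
We bound the two parts of $\dgnormell{\phih}{$\ell$}^\ell$ separately. They are the elementwise gradient term together with the interior-face jump term, and the boundary-face term $\sum_{F \in \calFhbnd} \hF^{1-\ell}\|\phih\|^\ell_{L^\ell(F)}$ (recall that $\jump{\phih}=\phih$ on boundary faces). In each case the idea is to pass from $L^\ell$ to $L^2$ locally by H\"older's inequality and then sum up with a discrete H\"older inequality. Throughout, quasi-uniformity and shape-regularity give $\hK \simeq \hF \simeq h$, and they also bound the number of elements and faces by $\lesssim h^{-d}$ and the number of boundary faces by $\lesssim h^{1-d}$. These counting bounds are what generate the powers of $h$.

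\textbf{Volume and interior-face terms.} On each $K$, H\"older's inequality gives $\|\nabla \phih\|_{L^\ell(K)} \le |K|^{1/\ell-1/2}\|\nabla\phih\|_{\LtwoK}$, with $|K|\simeq h^d$. Summing the $\ell$-th powers and applying the discrete H\"older inequality with exponents $2/\ell$ and $2/(2-\ell)$ (valid since $\ell<2$) yields
\[
\sum_K \|\nabla\phih\|^\ell_{L^\ell(K)} \lesssim h^{d(1-\ell/2)}\, (\card \triangles)^{1-\ell/2} \Big(\sum_K \|\nabla\phih\|^2_{\LtwoK}\Big)^{\ell/2} \lesssim \dgseminorm{\phih}^\ell ,
\]
because $h^{d}\card\triangles \lesssim |\Omega|$. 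The interior-face term is handled the same way. We use $\|\jump{\phih}\|_{L^\ell(F)}\le |F|^{1/\ell-1/2}\|\jump{\phih}\|_{\LtwoF}$ with $|F|\simeq h^{d-1}$, which gives $\hF^{1-\ell}\|\jump{\phih}\|^\ell_{L^\ell(F)} \lesssim h^{(d-1)(1-\ell/2)}\, h^{1-\ell/2} \big(\hF^{-1}\|\jump{\phih}\|^2_{\LtwoF}\big)^{\ell/2}$, i.e.\ a prefactor $h^{d(1-\ell/2)}$. The discrete H\"older inequality over at most $\lesssim h^{-d}$ interior faces cancels this prefactor and leaves the jump part of $\dgseminorm{\phih}^\ell$.

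\textbf{Boundary term.} Here the count of faces is smaller, so the scaling differs, and this is where the factor $h^{1-\ell}$ arises. On $F\in\calFhbnd$ we have $\hF^{1-\ell}\|\phih\|^\ell_{L^\ell(F)} \lesssim h^{1-\ell} h^{(d-1)(1-\ell/2)}\|\phih\|^\ell_{\LtwoF}$. Summing and using the discrete H\"older inequality over $\card\calFhbnd \lesssim h^{1-d}$ boundary faces, we get
\[
\sum_{F\in\calFhbnd} \hF^{1-\ell}\|\phih\|^\ell_{L^\ell(F)} \lesssim h^{1-\ell}\, h^{(d-1)(1-\ell/2)}\, h^{(1-d)(1-\ell/2)} \|\phih\|^\ell_{\LtwoG} = h^{1-\ell}\|\phih\|^\ell_{\LtwoG}.
\]
Adding the three estimates and using $a^{\ell/2}+b^{\ell/2}\le 2\,(a+b)^{\ell/2}$ to recombine the volume and interior-face contributions into $\dgseminorm{\phih}^\ell$ gives the claim. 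The constant depends only on $\ell$, $d$, the mesh regularity parameter, and $|\Omega|$, $|\Gamma|$. Note that no inverse inequality is needed, since only H\"older's inequality is used; the constant's possible dependence on $q$ and $s$ is inherited only nominally.

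\textbf{Main obstacle.} The delicate point is the counting bound $\card\calFhbnd \lesssim h^{1-d}$. It relies on $\Gamma$ having finite $(d-1)$-measure and on the boundary faces having measure $\gtrsim h^{d-1}$ under quasi-uniformity. I would justify it by $\sum_{F\in\calFhbnd}|F| = |\Gamma|$ together with $|F|\gtrsim h^{d-1}$. The case $d=1$, where faces are points and $|F|$ is counting measure, should be checked separately, but the same computation applies with $|F|=1$.
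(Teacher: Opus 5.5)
Your proof is correct and takes essentially the same route as the paper's. Both apply H\"older's inequality with exponents $2/\ell$ and $2/(2-\ell)$ separately to the volume, interior-face and boundary-face contributions, and in both the factor $h^{1-\ell}$ comes only from the boundary faces via quasi-uniformity. The only difference is cosmetic: you split H\"older into a local step plus a discrete step with cardinality bounds, which uses quasi-uniformity in all three terms. The paper applies H\"older once over the combined sum--integral and bounds $|\Omega|$, $\sum_{F\in\calFhint}\hF|F|$ and $\sum_{F\in\calFhbnd}\hF^{2(1-\ell)/(2-\ell)}|F|$ directly, so it needs quasi-uniformity only for the boundary term.
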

\begin{proof}
We wish to show that 
	\begin{eq}
 &\intO | \nablah \phih |^\ell \dx + \sum_{F \in \calFh} \frac{1}{\hF^{\ell-1}} \| \jump{\phih}\|_{L^{\ell}(F)}^{\ell}  \\
 \lesssim&\, \left( \sum_{K \in \triangles} \| \nabla \phih \|_{L^{2}(K)}^{2} + \sum_{F \in \calFhint} \frac{1}{\hF} \| \jump{\phih}\|_{L^{2}(F)}^{2}  \right)^{\ell/2}+h^{1-\ell}\left( \sum_{F \in \calFhbnd}\| {\phih}\|_{L^{2}(F)}^{2}  \right)^{\ell/2}.
\end{eq}	
By H\"older's inequality using $\frac{\ell}{2}+\frac{2-\ell}{2}=1$, we first infer that
\begin{eq}
	\sumK \intK |\nabla \phih |^\ell \dx \leq&\,  \left(\sumK \intK |\nabla \phih|^2 \dx \right)^{\ell/2} \cdot \left(\sumK \intK 1^{\frac{2}{2-\ell}} \dx \right)^{\frac{2-\ell}{2}}\\
	\lesssim&\,|\Omega|^{\frac{2-\ell}{2}} \left( \sum_{K \in \triangles} \| \nabla \phih \|_{L^{2}(K)}^{2}\right)^{\ell/2}. 
\end{eq}
For the terms involving interior faces, again by H\"older's inequality, we have
\begin{eq}  
	\sumFint \frac{1}{\hF^{\ell-1}} \intF |\jump{\phih}|^\ell \dG = &\,	\sumFint \frac{1}{\hF^{\ell/2-1}}  \frac{1}{\hF^{\ell/2}} \intF |\jump{\phih}|^\ell \dG \\
	 \leq&\, \left( \sumFint \frac{1}{\hF } \intF \jump{\phih}^2 \dG \right)^{\ell/2} \cdot  \left( \sumFint \left(\frac{1}{\hF^{\ell/2-1}} \right)^{\frac{2}{2-\ell}} \intF 1^{\frac{2}{2-\ell}}  \dG \right)^{\frac{2-\ell}{2}}\\
	\lesssim&\,  \left( \sumFint \frac{1}{\hF } \intF \jump{\phih}^2 \dG \right)^{\ell/2} \cdot  \left( \sumFint \hF  |F|\right)^{\frac{2-\ell}{2}}.
\end{eq}
For all $F \in \calFhint$, we pick an element $K \in \triangles$ with a face $F$, so that $\hF \leq \hK$ and the continuous trace inequality \eqref{cont trace ineq} with $v=1$ yields $|F| \lesssim \hK^{-1} |K|$; see, e.g.,~\cite[Lemma 5.1]{di2011mathematical} for similar arguments. Thus, 
\begin{eq} \label{interim estimate}
\sumFint \hF  |F| \lesssim |\Omega|.
\end{eq}
For the terms involving boundary faces, we have  
\begin{eq}  
	\sumFbnd \frac{1}{\hF^{\ell-1}} \intF |\jump{\phih}|^\ell \dG 
	\leq&\, \left( \sumFbnd \intF \jump{\phih}^2 \dG \right)^{\ell/2} \cdot  \left( \sumFbnd \left(\frac{1}{\hF^{\ell-1}} \right)^{\frac{2}{2-\ell}} \intF 1^{\frac{2}{2-\ell}}  \dG \right)^{\frac{2-\ell}{2}}\\
	\lesssim&\,  \left( \sumFbnd \intF \jump{\phih}^2 \dG \right)^{\ell/2} \cdot  \left( \sumFbnd \hF^{\frac{2(1-\ell)}{2-\ell}}  |F|\right)^{\frac{2-\ell}{2}}\\
		\lesssim&\,  h^{1-\ell}  \left( \sumFbnd \intF \jump{\phih}^2 \dG \right)^{\ell/2},
\end{eq}
where in the last line we have used the fact that, thanks to the quasi-uniformity of the mesh,
\begin{eq}
\left( \sumFbnd \hF^{\frac{2(1-\ell)}{2-\ell}}  |F|\right)^{\frac{2-\ell}{2}} \lesssim h^{1-\ell} |\partial \Omega|^{\frac{2-\ell}{2}}.
\end{eq}
 Combining the derived bounds leads to the statement.
\end{proof}		
With $s=3$ and $ \ell= \max\left\{\frac{3d}{d+3}, 1 \right\} \in [1,2)$, the assumptions of both Lemmas~\ref{lem: discr embedding full dG norm} and~\ref{lemma: fulld dG norm estimate} hold. Combining the results of the lemmas leads to the following discrete embedding estimate:
\begin{eq} \label{discrete embedding L3}
	\| \phih \|_{L^{3}(\Omega)}\leq \embconstant ( \dgseminorm{\phih} + h^{\frac{1-\ell}{\ell}} \Ltwonorm{\phih}{\Gamma}), \quad \ell= \max\left\{\frac{3d}{d+3}, 1 \right \}, \quad d \in \{1,2,3\},
\end{eq}
for some $\embconstant>0$, independent of $h$, which we use in the error analysis.

 \subsubsection{Properties of the interpolant} \label{sec: interpolant}
We will conduct the error analysis by splitting the error using the interpolant.  We thus introduce
\begin{equation}
	\left(	\Ih \phi \right)_{ \vert K} = \Ih^K \phi, \quad K \in \triangles,
\end{equation}
where $\Ih^K \phi$ is the local continuous interpolant, which satisfies the following standard estimates.
\begin{lemma}[see {\cite[Lemma 4.4.1, Theorem 4.4.4, Corollary 4.4.7]{brennerscott}}] \label{lem:  local interpolant properties}
	Let the assumptions made on $\triangles$ in this section hold.	For $n-\frac{d}{\ell}>0$, $1< \ell \leq \infty$, and $0 \leq i \leq n \leq q+1$, the following bounds hold: 
	\begin{eq}
		|\phi-\Ih^K \phi|_{W^{i,\ell}(K)} \lesssim h^{n-i}|\phi|_{W^{n,\ell}(K)},
	\end{eq}
	and
	\begin{eq} \label{est interpolant approx Linf}
		\| \phi - \Ih^K \phi \|_{L^{\infty}(K)} \lesssim h^{n-d/2}|\phi|_{H^{n}(K)}.
	\end{eq}
	Furthermore, the following stability result holds: 
	\begin{eq} \label{max stability intp}
		\| {\Ih^{K}} \phi \|_{L^\infty(K)} \lesssim  \| \phi \|_{L^{\infty}(K)}.
	\end{eq}
	The hidden constants are independent of both $K$ and $h$.
\end{lemma}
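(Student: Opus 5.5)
The three bounds in the statement are the classical local interpolation estimates, so the plan is to derive them by the usual Bramble--Hilbert scaling argument on the reference simplex $\hat K$, as in~\cite{brennerscott}. Let $F_K:\hat K\to K$ be the affine map $\hat x\mapsto B_K\hat x+b_K$, and define $\Ih^K$ by $\widehat{\Ih^K\phi}=\hat{\mathcal I}\hat\phi$, where $\hat{\mathcal I}$ is the nodal Lagrange interpolant of degree $q$ on $\hat K$. The condition $n-d/\ell>0$ gives the Sobolev embedding $W^{n,\ell}(\hat K)\hookrightarrow C(\overline{\hat K})$. Hence point evaluations are bounded on $W^{n,\ell}(\hat K)$, and $\hat{\mathcal I}:W^{n,\ell}(\hat K)\to \mathbb P^q(\hat K)\subset W^{i,\ell}(\hat K)$ is a bounded linear operator. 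Since $n\le q+1$, it reproduces $\mathbb P^{n-1}(\hat K)$.

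The first bound proceeds as follows.
\begin{enumerate}
\item On $\hat K$, write $\hat\phi-\hat{\mathcal I}\hat\phi=(I-\hat{\mathcal I})(\hat\phi-\hat p)$ for any $\hat p\in\mathbb P^{n-1}$.
\item Taking the infimum over $\hat p$ and applying the Bramble--Hilbert lemma gives
$|\hat\phi-\hat{\mathcal I}\hat\phi|_{W^{i,\ell}(\hat K)}\lesssim|\hat\phi|_{W^{n,\ell}(\hat K)}$.
\item Transfer to $K$ with the standard change of variables. Seminorms scale as $|v|_{W^{m,\ell}(K)}\simeq \|B_K^{-1}\|^{m}|\det B_K|^{1/\ell}|\hat v|_{W^{m,\ell}(\hat K)}$ up to shape-regularity constants, with $\|B_K\|\lesssim h_K$ and $\|B_K^{-1}\|\lesssim h_K^{-1}$.
\item This yields $h_K^{n-i}$. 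Finally, use $h_K\le h$ and $n-i\ge0$.
\end{enumerate}
The constants are uniform in $K$ because they depend only on $\hat K$, $q$, $\ell$, $n$, and the mesh regularity parameter.

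The $L^\infty$ estimate follows from the same argument with target $L^\infty(\hat K)$ and source $H^n(\hat K)$, that is, $\ell=2$ with $n>d/2$. On $\hat K$ we get
$\|\hat\phi-\hat{\mathcal I}\hat\phi\|_{L^\infty(\hat K)}\lesssim|\hat\phi|_{H^n(\hat K)}$.
The $L^\infty$ norm does not scale under $F_K$, while $|\hat\phi|_{H^n(\hat K)}\lesssim h_K^{n}|\det B_K|^{-1/2}|\phi|_{H^n(K)}\simeq h_K^{n-d/2}|\phi|_{H^n(K)}$. This gives the factor $h^{n-d/2}$, using $n-d/2>0$.

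For max stability, $\Ih^K\phi=\sum_j\phi(x_j)\varphi_j$, where the nodal basis functions $\varphi_j$ are pullbacks of fixed reference functions. Therefore $\|\Ih^K\phi\|_{L^\infty(K)}\le\max_j|\phi(x_j)|\sum_j\|\hat\varphi_j\|_{L^\infty(\hat K)}\lesssim\|\phi\|_{L^\infty(K)}$, for $\phi$ continuous on $\overline K$, which the paper's regularity ensures.

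The main obstacle is bookkeeping rather than substance. The delicate point is that nodal evaluation requires continuity, which is exactly where the hypothesis $n-d/\ell>0$ enters. One must also track that the scaling uses only shape regularity and never quasi-uniformity, so the constants are independent of $K$ and $h$.
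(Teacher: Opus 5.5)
Your proposal is correct. It follows the same route as the Brenner--Scott results the paper cites (the paper gives no proof of its own): the Bramble--Hilbert lemma on the reference simplex plus affine scaling under shape regularity gives the $W^{i,\ell}$ and $L^\infty$--$H^n$ estimates, and the uniformly bounded nodal basis gives max-norm stability. Your caveats are also right: $n>d/\ell$ is what makes nodal evaluation well defined, and the $L^\infty$ bound implicitly requires $n>d/2$.
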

We will also need the following approximation result in the error analysis:
	\begin{eq} \label{est interpolant dGstar}
		\dgstarseminorm{\phi - \Ih \phi } \lesssim h^{n-1}\|\phi\|_{H^{n}({\Om})}, \quad \phi \in H^n(\Omega), \quad  2 \leq n \leq q+1,
	\end{eq}
    where $\dgstarseminorm{\cdot}$ is defined as
	\begin{eq} \label{def dgstarseminorm}
\dgstarseminorm{\phi} = \left(\dgseminorm{\phi}^{2} + \sum_{K\in \triangles} \hK \Ltwonorm{\nabla \phi \cdot \bfn_K}{\partial K}^{2}\right)^{1/2}, 
	\end{eq}
   which follows by combining Lemma~\ref{lem:  local interpolant properties} with the continuous trace inequality.   
\subsection{Properties of the involved bilinear functionals}	
In the error analysis of \eqref{dG semi-discrete wave-adv diff system}, we will heavily rely on the coercivity and boundedness of the involved bilinear functionals.
\begin{lemma}\label{lemma: coercivity sip} Assume that $D \in W^{1, \infty}(\triangles) \cap L^\infty(\Omega)$ and that there exist $\Dmax$, $\Dmin>0$, independent of $h$, such that
	\begin{eq} \label{assumption D}
	&	0< \Dmin \leq D(x)  \leq \Dmax \quad  \text{for}\quad x \in  \overlineK,  \quad \text{for all}\  K \in \triangles.
	\end{eq}
	If the stabilization parameter satisfies $\sigma \eta > \Ctr^2 (d+1)  \dfrac{ \Dmax^2}{\Dmin} $, then the following estimate holds:
	\begin{eq} \label{coercivity seminorm}
	\asip(D(\cdot); \phih,\phih) \gtrsim  \dgseminorm{\phih}^{2} \quad \forall \phih \in \Vhq.
\end{eq}
\end{lemma}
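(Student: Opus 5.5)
Setting $\wh=\phih$ in the definition of $\asip$ gives
\[
\asip(D(\cdot);\phih,\phih)=\intO D|\nablah\phih|^2\dx-2\sumFint\intF \jump{\phih}\avg{D\nablah\phih}\cdot\nF\dG+\sumFint\intF\frac{\sigma\eta}{\hF}\jump{\phih}^2\dG .
\]
(We read the definition with the evident typo corrected: the first consistency term is $\jump{\wh}\avg{D\nablah\phih}$, so that the form is symmetric.) The plan is the standard SIP coercivity argument with $D$ weighted. By \eqref{assumption D}, the volume term is bounded below by $\Dmin\sumK\|\nabla\phih\|^2_{\LtwoK}$. Everything then reduces to absorbing the consistency term into the volume and penalty terms.

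For the consistency term on an interior face $F=\partial K_1\cap\partial K_2$, we use $|\avg{D\nablah\phih}\cdot\nF|\le \tfrac{\Dmax}{2}\bigl(|\nabla\phih_{|K_1}|+|\nabla\phih_{|K_2}|\bigr)$ and Cauchy--Schwarz on $F$. Applying the discrete trace inequality \eqref{face element trace ineq} componentwise to $\nabla\phih_{|K_i}\in[\mathbb{P}^{q-1}(K_i)]^d$ yields $\Ltwonorm{\nabla\phih_{|K_i}}{F}\le \Ctr h_{K_i}^{-1/2}\Ltwonorm{\nabla\phih}{K_i}$. Using $\hF\le h_{K_i}$ (shape regularity; in $d=1$ by the definition of $\hF$), we obtain
\[
\sumFint\intF |\jump{\phih}\avg{D\nablah\phih}\cdot\nF|\dG\le \Dmax\Ctr\Bigl(\sumFint \hF^{-1}\Ltwonorm{\jump{\phih}}{F}^2\Bigr)^{1/2}\Bigl(\tfrac12\sumFint\sum_{i=1,2}\Ltwonorm{\nabla\phih}{K_i}^2\Bigr)^{1/2}.
\]
Each element has at most $\Npartial=d+1$ faces, so the last double sum is at most $\tfrac{d+1}{2}\cdot 2\sumK\|\nabla\phih\|^2_{\LtwoK}$, up to the factor bookkeeping. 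The main care point is tracking this constant so that it matches the threshold $\Ctr^2(d+1)\Dmax^2/\Dmin$. If the counting is done more crudely, the constant changes only by a harmless factor, which can be compensated by the strict inequality on $\sigma\eta$.

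Finally, we apply Young's inequality $2ab\le \eps a^2+\eps^{-1}b^2$ with $a$ the gradient factor, $b$ the jump factor, and $\eps=\Dmin/(\cdots)$ chosen so that the gradient contribution is at most $\delta\Dmin\sumK\|\nabla\phih\|^2_{\LtwoK}$ for some $\delta<1$. The jump contribution then becomes $\Ctr^2(d+1)\frac{\Dmax^2}{\delta\Dmin}\sumFint\hF^{-1}\Ltwonorm{\jump{\phih}}{F}^2$. The hypothesis $\sigma\eta>\Ctr^2(d+1)\Dmax^2/\Dmin$ lets us pick $\delta<1$ close enough to $1$ that $\sigma\eta-\Ctr^2(d+1)\Dmax^2/(\delta\Dmin)>0$. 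This gives
\[
\asip(D(\cdot);\phih,\phih)\ge(1-\delta)\Dmin\sumK\|\nabla\phih\|^2_{\LtwoK}+\Bigl(\sigma\eta-\tfrac{\Ctr^2(d+1)\Dmax^2}{\delta\Dmin}\Bigr)\sumFint\hF^{-1}\Ltwonorm{\jump{\phih}}{F}^2\gtrsim\dgseminorm{\phih}^2,
\]
with hidden constants independent of $h$.
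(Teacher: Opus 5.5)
Your proposal is correct and follows essentially the same route as the paper. You bound the consistency term by $\Ctr(d+1)^{1/2}\Dmax\|\nablah\phih\|_{\Ltwo}\bigl(\sum_{F\in\calFhint}\hF^{-1}\|\jump{\phih}\|^2_{L^2(F)}\bigr)^{1/2}$ via the discrete trace inequality and $\hF\le h_K$, and then absorb it using the strict threshold on $\sigma\eta$; your Young's inequality with $\delta<1$ is equivalent to the elementary quadratic-form inequality the paper uses.

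Drop the aside that a cruder face count could be ``compensated by the strict inequality on $\sigma\eta$''. That is false in general, since a strict inequality gives no fixed-factor margin. It is also not needed, because your counting already yields the factor $d+1$ (in fact $(d+1)/2$).
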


\begin{proof}
This estimate follows by a simple adaptation of the arguments in~\cite[Lemma 4.12]{dipietro}. Indeed, by  employing the trace inequality \eqref{face element trace ineq}, we first obtain
\begin{eq} \label{est2 coerc}
		\sum_{F \in \calFhint} {\hF} \Ltwonorm{\avg{D(\cdot) \nablah \phih} \cdot \nF}{F}^{2} 
		\leq&\, \sum_{K \in \triangles} {\hK} \Ltwonorm{D(\cdot) \nabla \phih \cdot \nK}{\partial K}^{2}\\
		\leq&\, \sum_{K \in \triangles} {\hK} \|D\|^2_{L^\infty(\partial K)} \Ltwonorm{ \nabla \phih \cdot \nK}{\partial K}^{2}\\
	\leq&\, \sum_{K \in \triangles}  \Ctr^2 \Npartial \|D\|^2_{L^\infty(\partial K)} \Ltwonorm{ \nabla \phih}{K}^{2},
\end{eq}
since $\hF \leq \hK$ for all $F \in \calFhint$.   We thus have
\begin{eq}
\left|\sumFint  \intF \jump{\phih}\avg{D(\cdot) \nablah \phih}\cdot \nF  \dG \right| \leq \Ctr \Npartial^{1/2} \Dmax \|\nablah \phih\|_{\Ltwo}\left(\sumFint   \frac{1}{\hF} \Ltwonorm{\jump{\phih}}{F}^{2}\right)^{1/2}.
\end{eq}
Therefore, 
\begin{eq}
			\asip(D(\cdot); \phih, \phih) \geq&\, \begin{multlined}[t] \Dmin \intO | \nablah \phih |^2\dx 
					+ \sumFint  \intF \frac{\sigma \eta}{\hF} \jump{\phih}^2 \dG\\
					-2 \Ctr \Npartial^{-1/2} \Dmax \|\nablah \phih\|_{\Ltwo}\left(\sumFint   \frac{1}{\hF} \Ltwonorm{\jump{\phih}}{F}^{2}\right)^{1/2}.
				\end{multlined}
\end{eq}
By employing the inequality
\begin{eq}
\Dmin	A^2- 2 \beta_0 AB+ \sigma \eta B^2 \geq \frac{\sigma\eta \Dmin- \beta_0^2}{\Dmin+\sigma \eta}(A^2+B^2)
\end{eq}
for
\begin{eq}
	\sigma \eta \Dmin > \beta_0^2 \eqqcolon \left(\Ctr \Npartial^{1/2}\Dmax\right)^2,
\end{eq}
we obtain \eqref{coercivity seminorm}.

\end{proof}
We further need the following boundedness result.
\begin{lemma} \label{lemma: boundedness asip}
Let the assumptions made on $\triangles$ in Section~\ref{sec: dG discretization} and the assumptions made on the function $D$ in Lemma~\ref{lemma: coercivity sip} be satisfied.	Then the following estimate holds:
\begin{eq} \label{boundedness norm}
\left|	\asip(D(\cdot); \phih, \wh) \right| \lesssim	\dgseminorm{\phih} \dgseminorm{\wh}, \quad \forall \phih, \wh \in \Vhq.
\end{eq}
\end{lemma}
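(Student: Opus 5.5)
We bound the three groups of terms in the definition of $\asip(D(\cdot);\phih,\wh)$ separately, each by a multiple of $\dgseminorm{\phih}\dgseminorm{\wh}$, in the spirit of the proof of Lemma~\ref{lemma: coercivity sip}. The only ingredients are the bounds \eqref{assumption D} on $D$, the Cauchy--Schwarz inequality (on elements, on faces, and over sums), and the discrete trace inequality \eqref{boundary element trace ineq}. We read the consistency term in the definition of $\asip$ as $\jump{\wh}\avg{D(\cdot)\nablah\phih}\cdot\nF$ (the displayed definition has a typo), so that it pairs with the symmetric term $\jump{\phih}\avg{D(\cdot)\nablah\wh}\cdot\nF$.

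For the volume term, \eqref{assumption D} and Cauchy--Schwarz give
\[
\Bigl|\intO D\,\nablah\phih\cdot\nablah\wh\dx\Bigr|\leq \Dmax\|\nablah\phih\|_{\Ltwo}\|\nablah\wh\|_{\Ltwo}.
\]
For the penalty term, we write $\frac{1}{\hF}=\hF^{-1/2}\hF^{-1/2}$ and apply Cauchy--Schwarz on each face and then over $F\in\calFhint$. This bounds it by
\[
\sigma\eta\Bigl(\sumFint \tfrac1{\hF}\|\jump{\phih}\|^2_{\LtwoF}\Bigr)^{1/2}\Bigl(\sumFint \tfrac1{\hF}\|\jump{\wh}\|^2_{\LtwoF}\Bigr)^{1/2}.
\]
Since $\sigma$ and $\eta$ are fixed independently of $h$, this is again of the required form.

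The consistency terms are the only step needing the discrete setting. We weight with $\hF^{\pm1/2}$ and apply Cauchy--Schwarz:
\[
\Bigl|\sumFint\intF\jump{\wh}\avg{D\nablah\phih}\cdot\nF\dG\Bigr|\leq\Bigl(\sumFint \hF\|\avg{D\nablah\phih}\cdot\nF\|^2_{\LtwoF}\Bigr)^{1/2}\Bigl(\sumFint\tfrac1{\hF}\|\jump{\wh}\|^2_{\LtwoF}\Bigr)^{1/2}.
\]
The first factor is exactly the quantity estimated in \eqref{est2 coerc}. Using $\hF\leq\hK$, $\|D\|_{L^\infty(\partial K)}\leq\Dmax$ (continuity of $D$ up to $\overlineK$), and the discrete trace inequality applied to the polynomial components of $\nabla\phih|_K$, it is bounded by $\Ctr\Npartial^{1/2}\Dmax\|\nablah\phih\|_{\Ltwo}$. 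The symmetric term with the roles of $\phih$ and $\wh$ exchanged is handled identically.

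Summing the four bounds gives $|\asip(D(\cdot);\phih,\wh)|\lesssim (a_1b_1+a_1b_2+a_2b_1+a_2b_2)$, where $a_1=\|\nablah\phih\|_{\Ltwo}$, $a_2=\bigl(\sumFint \tfrac1{\hF}\|\jump{\phih}\|^2_{\LtwoF}\bigr)^{1/2}$, and $b_1,b_2$ are the analogous quantities for $\wh$. Since $a_1b_1+a_1b_2+a_2b_1+a_2b_2=(a_1+a_2)(b_1+b_2)\leq 2\dgseminorm{\phih}\dgseminorm{\wh}$, this yields \eqref{boundedness norm}. The hidden constant depends only on $\Dmax$, $\sigma\eta$, $\Ctr$ and $d$, all independent of $h$.

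The main point of care is the consistency term. The normal flux $\avg{D\nablah\phih}\cdot\nF$ on faces must be controlled by the broken gradient in the volume. This works because $\phih\in\Vhq$, so the discrete trace inequality absorbs the $\hF$ weight. That is precisely why the lemma is stated for discrete arguments rather than for all of $H^1(\triangles)$.
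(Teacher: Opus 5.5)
Your proposal is correct and follows essentially the same route as the paper: Cauchy--Schwarz for the volume and penalty terms, and an $\hF^{\pm 1/2}$-weighted Cauchy--Schwarz for the consistency terms combined with the flux bound \eqref{est2 coerc}, which rests on the discrete trace inequality, $\hF\le\hK$, and $D\le\Dmax$ on $\overlineK$. Your reading of the consistency term as $\jump{\wh}\avg{D(\cdot)\nablah\phih}\cdot\nF$ matches what the paper actually uses in its proof. Your final step $(a_1+a_2)(b_1+b_2)\le 2\dgseminorm{\phih}\dgseminorm{\wh}$ only makes explicit the combination of the term-wise bounds that the paper leaves implicit.
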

\begin{proof}
	By the Cauchy--Schwarz inequality and the assumed regularity of $D(\cdot)$, we have
\begin{eq}
\left|	\intO D(\cdot) \nablah \phih \cdot \nablah \wh \dx  \right| \leq \Dmax \|\nablah \phih\|_{\Ltwo} \|\nablah \wh\|_{\Ltwo} \lesssim	\dgseminorm{\phih} \dgseminorm{\wh}.
\end{eq}	
Similarly, the Cauchy--Schwarz inequality yields
\begin{eq}
	\left| \sumFint \intF  \frac{\sigma\eta}{\hF} \jump{\phih} \jump{\wh} \dG \right| \lesssim&\,  \left(\sumFint \frac{1}{\hF} \| \jump{\phih}\|^2_{\LtwoF}\right)^{1/2}\left(\sumFint \frac{1}{\hF} \| \jump{\wh}\|^2_{\LtwoF}\right)^{1/2}\\
	\lesssim&\,	\dgseminorm{\phih} \dgseminorm{\wh}.
\end{eq}
The remaining terms within $\asip(D(\cdot); \phih, \wh)$ can be estimated as follows:
\begin{eq} \label{2}
	&\left| 	- \sum_{F \in \calFhint} \intF \left( \jump{\wh}\avg{D(\cdot) \nablah \phih }\cdot \nF + \jump{\phih}\avg{D(\cdot) \nablah \wh }\cdot \nF \right)  \dG \right| \\
	\lesssim&\, \begin{multlined}[t] \sum_{F \in \calFhint} \Bigl( \invhF \|\jump{\wh}\|_{\LtwoF} \hF \|\avg{D(\cdot) \nablah \phih }\cdot \nF \|_{\LtwoF} \\\hspace*{3cm}+  \invhF \|\jump{\phih}\|_{\LtwoF} \hF \|\avg{D(\cdot) \nablah \wh }\cdot \nF \|_{\LtwoF} \Bigr),
		\end{multlined}
\end{eq}
from which by employing \eqref{est2 coerc}, the claim follows.
\end{proof}
\begin{remark} \label{remark}
In the error analysis, we will need to bound terms such as $\asip(p-\Ih p, \wh)$. Since $(p-\Ih p)(t) \notin \Vhq$, we do not have access to the discrete trace inequality used in the proof of Lemma~\ref{lemma: boundedness asip}, therefore we cannot apply \eqref{boundedness norm} directly. For  functions $\phi \in H^{2}(\Omega)+\Vhq$, boundedness holds in the following sense:
	\begin{eq}
		\left|		\asip(D(\cdot); \phi, \wh) \right| \lesssim \dgstarseminorm{\phi} \dgseminorm{\wh}.
	\end{eq}
The claim follows analogously to~\cite[Lemma 4.16]{dipietro}.
\end{remark}
We next establish analogous results for the upwind functional $\bupwind(\bfv; \cdot, \cdot)$ .
\begin{lemma} \label{lemma: identity bupwind}
Let the assumptions made on $\triangles$ and $\Vh$ in this section hold and let $\bfv \in \R^d$. Then $\bupwind(\bfv; \cdot, \cdot)$ satisfies the following identity:
\begin{eq}
	\bupwind(\bfv;\phih,\phih)  =& \begin{multlined}[t] 	\frac12	\sumFint \intF| \bfvnF| \jump{\phih}^2 \dG+ \frac12 \sum_{F \subset \Gin} \intF |\bfvnF| (\phih)^2 \dG \\ +\frac12 \sum_{F \subset \Gout} \intF |\bfvnF| (\phih)^2 \dG
		\end{multlined}
\end{eq}
for all $\phih \in \Vhq$.
\end{lemma}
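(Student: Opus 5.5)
We prove the identity elementwise, integrating the volume term by parts and collecting the resulting face contributions. Take $\wh=\phih$ in \eqref{def: bupwind}. Since $\bfv$ is constant, $\phih(\bfv\cdot\nabla\phih)=\tfrac12\bfv\cdot\nabla(\phih)^2$ on each $K$. The divergence theorem on $K$ then gives
\begin{eq}
-\intO \phih\,\bfv\cdot\nablah\phih\dx = -\frac12\sumK \int_{\partial K} (\phih_{\vert K})^2\,\bfv\cdot\nK \dG .
\end{eq}
We regroup this sum face by face. On an interior face $F=\partial K_1\cap\partial K_2$ with $\nF=\nK{}_{\vert K_1}=-\boldsymbol{n}_{K_2}$, the contribution is $-\tfrac12\intF \bfvnF\,\bigl((\phih_{\vert K_1})^2-(\phih_{\vert K_2})^2\bigr)\dG = -\tfrac12\intF \bfvnF\,\jump{(\phih)^2}\dG$. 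On a boundary face it is $-\tfrac12\intF \bfvnF\,(\phih)^2\dG$.

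Next we treat the interior faces. Write $a=\phih_{\vert K_1}$ and $b=\phih_{\vert K_2}$, so that $\jump{(\phih)^2}=a^2-b^2=(a-b)(a+b)=2\jump{\phih}\avg{\phih}$. The upwind value satisfies $\phihup=\avg{\phih}+\tfrac12\,\textup{sign}(\bfvnF)\jump{\phih}$, with sign $+1$ when $\bfvnF\ge0$. Indeed, if $\bfvnF\ge 0$ then $\avg{\phih}+\tfrac12(a-b)=a$, and otherwise the formula gives $b$. Hence, on each interior face,
\begin{eq}
\intF \phihup\,\bfvnF\jump{\phih}\dG -\intF \bfvnF\jump{\phih}\avg{\phih}\dG = \frac12\intF|\bfvnF|\jump{\phih}^2\dG .
\end{eq}
The convention for $\bfvnF=0$ is irrelevant, since the integrand then vanishes.

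Finally we handle the boundary faces. On $\Gout$ the term $\int_{\Gout}(\phih)^2\bfvnF\dG$ from \eqref{def: bupwind} combines with $-\tfrac12\int_{\Gout}\bfvnF(\phih)^2\dG$ to give $\tfrac12\int_{\Gout}|\bfvnF|(\phih)^2\dG$, because $\bfvnF\ge0$ there. On $\Gin$ only the integration-by-parts term $-\tfrac12\int_{\Gin}\bfvnF(\phih)^2\dG$ appears, and it equals $\tfrac12\int_{\Gin}|\bfvnF|(\phih)^2\dG$ since $\bfvnF<0$ there by \eqref{def Gin}. Summing the three contributions yields the claimed identity.

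The only delicate point is the bookkeeping of orientations. One must make sure the jump convention $\jump{\phi}=\phi_{\vert K_1}-\phi_{\vert K_2}$ matches $\nF=\boldsymbol{n}_{K_1}$, and that boundary faces are split consistently into $\Gin$ and $\Gout$; this requires assuming each boundary face lies entirely in one of them, which holds since $\bfv$ is constant and faces are flat. The rest is routine.
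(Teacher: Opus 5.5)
Your proof is correct and follows essentially the same route as the paper. The paper defers to Rivi\`ere (Sec.~4.2.3), and its suppressed argument is exactly yours: elementwise integration by parts using that the constant $\bfv$ is divergence-free, regrouping face by face with $\jump{(\phih)^2}=2\avg{\phih}\jump{\phih}$ and $\phihup-\avg{\phih}=\tfrac12\,\textup{sign}(\bfvnF)\jump{\phih}$, and splitting the boundary terms by the sign of $\bfvnF$ on $\Gin$ and $\Gout$.
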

\begin{proof}
	The proof can be found in~\cite[Sec.~4.2.3]{riviere2008discontinuous}. 
\showfalse  	
\ifshow
	We recall here the main arguments for the convenience of the reader. Using Green's formula and the fact that velocity is divergence-free, we have the rewriting
	\begin{eq}
-\sumK \intK \bfv \phih \cdot \nablah \phih \dx =&\,  -\frac12 \sumK \intK \bfv \cdot \nabla (\phih )^2 \dx\\
= &\, -\frac12\sumK \intK  \nabla \cdot (\bfv \phih )^2 \dx \\ 
= &\, -\frac12 \sumK \int_{\partial K}   \bfv \cdot \nK (\phih)^2 \dG \\
=&\, -\frac12 \sumFint \intF  \bfvnF \jump{\phih}^2 \dG- \frac12 \sumFbnd \intF  \bfvnF (\phih)^2 \dG.
	\end{eq}
Therefore, 
\begin{eq}
	b(\bfv; \phih, \phih)
	 =&\, -\sumK \intK \bfv \phih \cdot \nabla  \phih \dx + \sumFint \int_{F}   \bfvnF \phihup  \jump{\phih} \dG + \sum_{F \subset \Gout} \intF \bfvnF (\phih)^2 \dG \\
	 =&\, \begin{multlined}[t]
	 	 -\frac12 \sumFint \intF  \bfvnF \jump{(\phih)^2 }\dG- \frac12 \sumFbnd \intF  \bfvnF (\phih)^2 \dG \\
	 	 + \sumFint \int_{F}   \bfvnF \phihup  \jump{\phih} \dG + \sum_{F \subset \Gout} \intF \bfvnF (\phih)^2 \dG. 
	 \end{multlined}
\end{eq}	
The first and third term on the right-hand side can be rewritten as
\begin{eq}
&\, \begin{multlined}[t]
	-\frac12 \sumFint \intF  \bfvnF \jump{(\phih)^2} \dG
	+ \sumFint \int_{F}   \bfvnF \phihup  \jump{\phih} \dG 
\end{multlined}\\
=&\, \begin{multlined}[t]
	\sumFint \intF \bfvnF(\phihup \jump{\phih}-\frac12 \jump{(\phih)^2})\dG.
\end{multlined}
\end{eq}
Thus,
\begin{eq}
	&b(\bfv; \phih, \phih)\\
	=&\, \begin{multlined}[t]
		\sumFint \intF \bfvnF(\phihup \jump{\phih}-\frac12 \jump{(\phih)^2})\dG-\frac12 \sumFbnd \intF \bfvnF (\phih)^2 \dG  + \sum_{F \subset \Gout} \intF \bfvnF (\phih)^2 \dG
	\end{multlined}\\
		=&\, \begin{multlined}[t]
		\sumFint \intF \bfvnF(\phihup \jump{\phih}-\avg{\phih}\jump{\phih})\dG-\frac12 \sum_{F \subset \Gin}\intF \bfvnF (\phih)^2 \dG  +\frac12 \sum_{F \subset \Gout} \intF \bfvnF (\phih)^2 \dG
	\end{multlined}\\
			=&\, \begin{multlined}[t]
	\frac12	\sumFint \intF| \bfvnF| \jump{\phih}^2\dG+ \frac12 \sum_{F \subset \Gin} \intF |\bfvnF| (\phih)^2 \dG  +\frac12 \sum_{F \subset \Gout} \intF |\bfvnF| (\phih)^2 \dG,
	\end{multlined}
\end{eq}		
as claimed.
\fi
\end{proof}
The next result concerns the boundedness of the upwind functional.
\begin{lemma} \label{lemma: bound bupwind}
Let the assumptions made on $\triangles$ and $\Vh$ in this section hold.  Then the following bound holds:
\begin{eq}
|\bupwind(\bfv;\phi,\wh)| \lesssim&\, \begin{multlined}[t]
 \gamma \left( \dgseminorm{\wh}^2 + \sumFint  \| |\bfvnF|^{1/2} \jump{\wh }\|^2_{\LtwoF}  + \sum_{F \subset \Gout}  \| |\bfvnF|^{1/2} \jump{\wh }\|^2_{\LtwoF} \right) \\+ \|\phi\|^2_{\Ltwo}+ \sumFint \|\phi_{\textup{up}}\|^2_{L^2(F)} 
 +\sum_{F\subset \Gout}  \|\phi\|^2_{\LtwoF}
\end{multlined}
\end{eq}
for all $\phi \in H^2(\Omega)+\Vh$, $\wh \in \Vhq$, and any $\gamma>0$. 
\end{lemma}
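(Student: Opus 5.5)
The bound follows by estimating the three terms in the definition \eqref{def: bupwind} of $\bupwind(\bfv;\phi,\wh)$ separately, then applying Young's inequality $ab \leq \gamma a^2 + \frac{1}{4\gamma} b^2$ to each. The constants depending on $\gamma$ and on $|\bfv|$ are absorbed into the hidden constant; if one prefers them to be independent of $\gamma$, one rescales so that $\gamma$ sits on the $\wh$-terms.

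For the volume term, the velocity $\bfv \in \R^d$ is constant, so the Cauchy--Schwarz inequality gives
\[
\left| \intO \phi \, (\bfv \cdot \nablah \wh) \dx \right| \leq |\bfv| \, \|\phi\|_{\Ltwo} \|\nablah \wh\|_{\Ltwo} \leq |\bfv| \, \|\phi\|_{\Ltwo} \dgseminorm{\wh}.
\]
Young's inequality then yields $\gamma \dgseminorm{\wh}^2 + C_\gamma \|\phi\|^2_{\Ltwo}$.

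For the interior-face term, I split the weight $|\bfvnF| = |\bfvnF|^{1/2} |\bfvnF|^{1/2}$ and apply Cauchy--Schwarz on each face and then over the sum of faces:
\[
\left|\sumFint \intF \phi_{\textup{up}} \, \bfvnF \, \jump{\wh} \dG\right| \leq |\bfv|^{1/2} \Bigl(\sumFint \|\phi_{\textup{up}}\|^2_{\LtwoF}\Bigr)^{1/2} \Bigl(\sumFint \| |\bfvnF|^{1/2}\jump{\wh}\|^2_{\LtwoF}\Bigr)^{1/2}.
\]
The $\Gout$ term is treated identically, using that $\jump{\wh} = \wh$ on boundary faces. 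Applying Young's inequality to both produces exactly the weighted jump terms on the $\wh$ side and the traces $\|\phi_{\textup{up}}\|_{\LtwoF}$ and $\|\phi\|_{\LtwoF}$ on the $\phi$ side.

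I do not expect a real obstacle. For $\phi \in H^2(\Omega)+\Vh$ the traces are well defined, and no discrete trace inequality is needed because the traces of $\phi$ are kept explicitly on the right-hand side. The only point needing care is bookkeeping: the velocity-dependent constants and $\gamma$ must be placed consistently with the stated form, with $\gamma$ multiplying the $\wh$-terms and $C_\gamma \sim \gamma^{-1}$ hidden in front of the $\phi$-terms.
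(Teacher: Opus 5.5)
Your proposal is correct and follows the same route as the paper. The paper defers to the proof of \cite[Theorem~4.2]{riviere2008discontinuous}: it bounds the volume, interior-face, and outflow terms of $\bupwind(\bfv;\phi,\wh)$ separately by Cauchy--Schwarz and Young's inequality, splitting the face weight as $|\bfvnF|^{1/2}|\bfvnF|^{1/2}$ and hiding the $\gamma$-dependence in the constant in front of the $\phi$-terms. As you say at the end, only the constant on the $\wh$-terms can be taken independent of $\gamma$; the one on the $\phi$-terms must grow like $\gamma^{-1}$, since letting $\gamma\to 0$ and rescaling $\wh$ would otherwise force $\bupwind(\bfv;\phi,\wh)=0$.
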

\begin{proof}
The arguments can be found in the proof of~\cite[Theorem 4.2]{riviere2008discontinuous}. 
\showfalse
\ifshow
We include them here for completeness.  By the Cauchy--Schwarz and Young's inequalities, 
\begin{eq}
\left|- \sumK \intK  \phih \left( \bfv \cdot \nabla \wh \right) \dx \right| \lesssim&\, \sumK \|\phih\|_{\LtwoK} \|\nabla \wh\|_{\LtwoK}\\
\lesssim&\, \|\phih\|^2_{\Ltwo}+ \gamma \dgseminorm{\wh}^2
\end{eq}
for any $\gamma>0$. Similarly, 
\begin{eq}
\left|	\sumFint  \intF  \phihup \bfvnF \jump{\wh} \dG \right| 
\lesssim&\,  \gamma \sumFint  \| |\bfvnF|^{1/2} \jump{\wh }\|^2_{\LtwoF} + \sumFint \|\phihup\|^2_{\LtwoF},
\end{eq}
and
\begin{eq}
\left|	\sum_{F\subset \Gout} \intF \bfvnF \phih \wh \dG \right|  \lesssim \gamma \sum_{F\subset \Gout} \| |\bfvnF|^{1/2} \jump{\wh }\|_{\LtwoF}^2+\sum_{F\subset \Gout}  \|\phih\|^2_{L^2(F)},
\end{eq}
for any $\gamma>0$.
Employing these estimates to bound the terms within $\bupwind(\bfv; \phih, \wh)$ leads to the claim.
\fi
\end{proof}
\section{Discontinuous Galerkin analysis of the Westervelt equation with absorbing boundary conditions} 
In this section, we analyze the semi-discrete acoustic subproblem which consists in finding $\ph \in C^{2}([0,T];\Vhq)$, such that 
\begin{align}\label{semidiscrete acoustic subproblem} \tag{$P^p_h$}
	\begin{cases}
		\int_{\Omega}((1+\kappa \ph)\pht)_t  \wh\dx +  \asip(\csq \ph+ \beta \pht,\wh) + \int_{\Gamma} \alpha \pht \wh \dG
		= \int_{\Omega} \fhp \wh \dx + \int_{\Gamma} \gAh \wh \dG,\\[2mm]
		\text{for all }\wh \in \Vhq \text{ and at all times }t \in [0,T],\text{with} \\[1mm]
			(\ph(0),\ph_{t}(0)) = (\ph_{0},  \ph_1). 
	\end{cases}
\end{align}
This result serves as a basis for the numerical analysis of the whole semi-discrete system \eqref{dG semi-discrete wave-adv diff system}.
We next establish existence of the semi-discrete acoustic solution and derive error bounds. The results are stated in the following theorem. 

\begin{theorem}\label{thm: wellposedness nonlin}
Let the acoustic parameters satisfy $\csq$, $ \beta> 0$, and $\kappa \in \R$. Let the assumptions on $\triangles$ made in Section~\ref{sec: dG discretization} hold. Let the polynomial degree be $\qp \geq 1$ for $d \in \{1,2\}$ and $q \geq 2$ for $d=3$. Let $(p_0, p_1) \in H^{q+1}(\Omega) \times C(\Omega)$. Suppose $p \in \Xp$ is the solution of the exact acoustic problem that satisfies the non-degeneracy condition \eqref{non-degeneracy}, where the space $\Xp$ is defined in \eqref{def Xp Xu}.    Let $\fhp$ and $\gAh$ satisfy the accuracy assumptions made in \eqref{approx properties source terms} and let approximate acoustic initial conditions be chosen as
\begin{eq}
	(\ph_{0},  \ph_1) = (\Ih p_{0}, \Ih p_{1}). 
\end{eq}
 Then there exists $\barh>0$, such that for all $h \leq \barh$,  the problem \eqref{semidiscrete acoustic subproblem} has a unique  solution $\ph \in C^2([0,T]; \Vhqp)$ satisfying
\begin{eq} \label{error bound pressure}
	\begin{multlined}[t]
		 \tnorm{p(t)-\ph(t)}^2\coloneqq	\Ltwonorm{p_{t}(t)-\pht(t)}{\Omega}^{2} + \csq \dgseminorm{p(t)-\ph(t)}^{2} + \|p(t)-\ph(t)\|^2_{\LtwoG}\\ \hspace*{2cm}+ \beta \int_{0}^{t} \dgseminorm{\pt(s)-\ph_{t}(s)}^{2} \ds + \int_{0}^{t} \Ltwonorm{\pt(s)-\pht(s)}{\Gamma}^{2} \ds \lesssim h^{2q},
	\end{multlined}
\end{eq}
for all $t \in [0,T]$, where the hidden constant depends on $\|p\|_{\Xp}$ and $T$, but not $h$.
\end{theorem}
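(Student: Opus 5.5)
We argue by a fixed-point argument combined with energy-based error estimates, following \cite{antonietti2020high, abstractlipschitzpaper}. Fix $\bar{h}$ to be chosen and introduce the set
\[
\ball = \Bigl\{ \ph \in C^2([0,T];\Vhq) : (\ph(0),\ph_t(0))=(\Ih p_0,\Ih p_1),\ \sup_{t}\tnorm{p(t)-\ph(t)} \le C_* h^{q-\epsilon_0}\Bigr\},
\]
with a small loss $\epsilon_0\in(0,\tfrac12)$ (or no loss, with a constant $C_*$ depending on $\|p\|_{\Xp}$, to be tuned). Given $\qh\in\ball$, we define $\ph=\mathcal{T}\qh$ as the solution of the \emph{linearized} problem obtained by freezing the nonlinearity:
\[
\int_\Omega (1+\kappa \qh)\ph_{tt}\wh \dx + \int_\Omega \kappa \qh_t \ph_t \wh\dx + \asip(\csq\ph+\beta\ph_t,\wh)+\int_\Gamma\alpha\ph_t\wh\dG = \int_\Omega\fhp\wh\dx+\int_\Gamma\gAh\wh\dG.
\]
This is a linear ODE system in finite dimensions. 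Its solvability requires the mass matrix weighted by $1+\kappa\qh$ to be uniformly positive. We obtain this by showing $\|\qh\|_{L^\infty}$ is close to $\|p\|_{L^\infty}$. To this end we split $p-\qh=(p-\Ih p)+(\Ih p-\qh)$, use \eqref{est interpolant approx Linf} for the first part, and use the inverse estimate \eqref{inverse inequality} with $\ell=\infty,\ell'=2$ together with a discrete Poincaré-type bound (controlled by $\dgseminorm{\cdot}$ plus the $\LtwoG$ term in $\tnorm{\cdot}$) for the second. The second part is then bounded by $h^{-d/2}h^{q}$, which is small for $h\le\bar h$ since $q>d/2$; this is exactly the requirement $q\ge2$ in $d=3$. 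Consequently $|\kappa|\|\qh\|_{L^\infty}\le r'<1$, and in the same way $\|\qh_t\|_{L^\infty}$ is uniformly bounded, since the time integral of $\dgseminorm{\cdot}$ of the error in $p_t$ is controlled.

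The core step is the energy estimate for $e_h=\Ih p-\ph$, with $\eta=p-\Ih p$ the interpolation error. Subtracting the scheme from the exact weak form, which is consistent since the SIP form is consistent for $p\in H^2$, we obtain an error equation. We test it with $\wh=e_{h,t}$. The left-hand side yields $\tfrac12\tfrac{\textup{d}}{\textup{d}t}\bigl(\|\sqrt{1+\kappa\qh}\,e_{h,t}\|^2+\csq\asip(e_h,e_h)\bigr)+\beta\asip(e_{h,t},e_{h,t})+\alpha\|e_{h,t}\|^2_{\LtwoG}$, up to the term $\tfrac{\kappa}{2}\int\qh_t e_{h,t}^2$, which is handled by the $L^\infty$ bound on $\qh_t$. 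We use Lemma~\ref{lemma: coercivity sip} with $D\equiv1$ for coercivity. To also control $\|e_h\|_{\LtwoG}$, we add $\|e_h\|^2_{\LtwoG}$ to the energy using $\tfrac{\textup{d}}{\textup{d}t}\|e_h\|_\Gamma^2\le\|e_h\|^2_\Gamma+\|e_{h,t}\|^2_\Gamma$.

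On the right-hand side we must handle several contributions:
\begin{itemize}[leftmargin=*]
\item the interpolation terms $\asip(\csq\eta+\beta\eta_t,e_{h,t})$, bounded via Remark~\ref{remark} and \eqref{est interpolant dGstar} and absorbed into $\beta\dgseminorm{e_{h,t}}^2$;
\item the terms $\int(1+\kappa\qh)\eta_{tt}e_{h,t}$ and $\alpha\int_\Gamma\eta_t e_{h,t}$, together with the source errors \eqref{approx properties source terms};
\item the genuinely nonlinear mismatch $\kappa\int\bigl((p-\qh)p_t\bigr)_t e_{h,t}$.
\end{itemize}
The nonlinear mismatch is the main obstacle. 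Its $(p-\qh)p_{tt}$ part is benign, since $p_{tt}\in L^2(L^\infty)$. The $(p-\qh)_t p_t$ part and the cross terms require Hölder's inequality in $L^3\times L^6$ or $L^3\times L^2\times L^6$. Here we apply the discrete embedding \eqref{discrete embedding L3} to control $\|(\Ih p-\qh)_t\|_{L^3}$ by $\dgseminorm{\cdot}+h^{(1-\ell)/\ell}\|\cdot\|_{\LtwoG}$. The $h$-dependent boundary factor is compensated by the smallness $h^{q}$ of $\qh\in\ball$, and its square integrability in time comes from the $\int_0^t\|\cdot\|_\Gamma^2$ part of $\tnorm{\cdot}$. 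After applying Young's inequality and Gronwall's lemma, we obtain
\[
\tnorm{p-\ph}^2\le C_1 h^{2q}\bigl(1+C_*^2 h^{\delta}\bigr)
\]
for some $\delta>0$. Choosing $C_*^2=2C_1$ and $\bar h$ small, $\mathcal{T}$ maps $\ball$ into itself.

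Finally, we show that $\mathcal{T}$ is a contraction in a weaker norm, namely the energy norm of differences. For two inputs, the difference of the outputs satisfies a linear equation whose right-hand side is $\kappa(\qh_1-\qh_2)$ multiplied by discrete quantities with bounded $L^\infty$ norms (again bounded via inverse estimates and the ball). An identical energy argument gives a Lipschitz constant $\lesssim h^{\delta}<1$ for $h\le\bar h$. Banach's fixed-point theorem then gives existence and uniqueness of $\ph$ in $\ball$. Uniqueness in the whole of $C^2([0,T];\Vhq)$ follows because the finite-dimensional ODE has locally Lipschitz coefficients while non-degenerate. The bound \eqref{error bound pressure} then follows from the ball estimate combined with the triangle inequality and the interpolation bounds of Lemma~\ref{lem:  local interpolant properties}, using the continuous trace inequality for the boundary terms of $\eta$.
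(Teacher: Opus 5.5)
\textbf{Main gap: the non-degeneracy step fails for $d=2$, $q=1$.} You bound $\|\Ih p-\qh\|_{\Linf}$, and ``in the same way'' $\|(\Ih p-\qh)_t\|_{\Linf}$, through the $L^2$--$L^\infty$ inverse estimate. That costs $h^{-d/2}$ and leaves $C_*h^{q-\epsilon_0-d/2}$, which is small only if $q>d/2$. This is fine for $d=1$, and for $d=3$ with $q\ge 2$. The theorem, however, also covers $d=2$ with $q=1$. There you get $C_*h^{-\epsilon_0}\to\infty$, or the non-small constant $C_*$ without loss. So $|\kappa|\|\qh\|_{\Linf}<1$ is not guaranteed, and the linearized mass matrix may degenerate. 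You also lose the $L^\infty$ control of $\qh_t$ that you need for $\tfrac{\kappa}{2}\int_\Omega \qh_t e_{h,t}^2$ and for $\kappa\qh_t\eta_t$.

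The missing idea is to pass through $L^3$ instead of $L^2$. Use $\|\phih\|_{\Linf}\lesssim h^{-d/3}\|\phih\|_{L^3(\Omega)}$ together with the discrete embedding \eqref{discrete embedding L3}, whose boundary contribution carries the factor $h^{(1-\ell)/\ell}$. The total loss is then $h^{-d/3-(\ell-1)/\ell}$, so one only needs $q>d/3+(\ell-1)/\ell$, i.e.\ $q>5/6$ when $d=2$. For the time derivative you obtain, and only need, an $L^2(0,T;\Linf)$ bound. It comes from the time-integrated $\dgseminorm{\cdot}$ and $\LtwoG$ parts of $\tnorm{\cdot}$; this is Lemma~\ref{lem: unif bound pht}. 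You do invoke \eqref{discrete embedding L3}, but only for the mismatch term $(p-\qh)_t p_t$. There it is unnecessary, since $p_t\in L^\infty(0,T;\Linf)$.

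\textbf{Second gap: the fixed-point bookkeeping does not close as written.} The mismatch $\kappa((p-\qh)p_t)_t$ is \emph{linear} in the input error, with $h$-independent coefficients $p_{tt}$ and $p_t$. The energy estimate therefore gives
$\sup_t\tnorm{p-\mathcal{T}\qh}^2\le C_1h^{2q}+C_2(T)\sup_t\tnorm{p-\qh}^2$,
not $C_1h^{2q}(1+C_*^2h^{\delta})$. Likewise, the Lipschitz constant of $\mathcal{T}$ is $O(1)$ in $h$, not $h^{\delta}$. The difference equation also contains $\kappa(\qh_1-\qh_2)\ph_{2,tt}$, which requires a separate bound on $\ph_{2,tt}$ that membership in your ball does not provide. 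Without extra devices, such as a time-weighted (Bielecki-type) norm or iteration over short intervals, neither invariance nor contraction holds for non-small $T$.

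The paper avoids linearization altogether:
\begin{enumerate}
\item It gets local existence from Picard--Lindel\"of for the first-order ODE system on $\openset$.
\item It defines $\thh$ as the maximal time on which $\tnorm{\ph-\Ih p}<C_0h^{d/3+\eps}$.
\item It estimates the error of the actual nonlinear solution. The terms $\kappa\ehp p_{tt}$ and $\kappa\ehp_t p_t$ then involve the same unknown and are absorbed by Gr\"onwall's inequality.
\item The $L^3$ argument above shows $(\ph(\thh),\ph_t(\thh))\in\openset$, and continuation yields $\thh=T$.
\end{enumerate}
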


To prove this result, we follow the general approach from~\cite{abstractlipschitzpaper} (see also~\cite{vanjawellposedness, careaga2026finite, dorich2025strong}) and adapt it to accommodate the dG setting and absorbing boundary conditions.\\
\indent  In the first step, the semi-discrete acoustic problem \eqref{semidiscrete acoustic subproblem} is rewritten as a system of nonlinear first-order differential equations with a Lipschitz continuous right-hand side. We then show that $(\ph(0),\ph_{t}(0)) \in \openset$, where $\openset$ is the open set defined by
\begin{align}
	\openset = \{ (\phih,\wh)^T \in (\Vhqp)^{2} \mid |\kappa| \| \phih \|_{L^{\infty}(\Omega)} < r+r_{0}<1 \}.
\end{align}
Here $r$ is defined by the non-degeneracy condition of the exact problem \eqref{non-degeneracy} and $r_{0}>0$ is a constant to be determined later. As the right-hand side of the system is Lipschitz continuous on this open domain,
we will be able to apply the local version of the Picard--Lindel\"of theorem~\cite[Problem 7.1.3]{picardlindelofp105} and conclude that there exists a unique solution up to some time $\tilde{t}>0$ that lies in the closure $\overline{\mathcal{A}}_h$. We will then show that there exist $\thh>0$ and $C_{0}>0$, where
\begin{align}\label{eq: def thh}
	\begin{split}
		\thh =  \sup \,  \Bigl \{  t \in (0,T] \mid & \textup{  a unique solution } 
		\ph \in C^{2}([0,t];\Vhqp)\ \textup{of \eqref{semidiscrete acoustic subproblem} exists and }  \\ &  
		\tnorm{\ph(t)-\Ih p(t)} < \Czero h^{d/3+\eps} \Bigr \}
	\end{split}
\end{align}
for $\eps>0$ that will be fixed in the upcoming proof (see \eqref{def eps}). Subsequently, we will establish error bounds on the interval $[0,\thh]$ that do not depend on $\thh$. Assuming $\thh < T$, we will show that $(\ph(\thh),\ph_{t}(\thh)) \in \openset$. By using Lipschitz continuity again, we can conclude that the solution exists beyond $\thh$ and thus $\thh = T$ and the error bounds we established hold on $[0,T]$. \\
\indent We note that having $\beta>0$ in \eqref{error bound pressure} and thus \eqref{eq: def thh} is crucial because it allows us to have a bound on $\int_0^{\thh}|\pht-\Ih p_{t}|^2_{\dG}\ds$, and, in turn, exploit inequality \eqref{discrete embedding L3}. The estimate \eqref{discrete embedding L3} will be an important tool in the forthcoming analysis of the acoustic subproblem.
\subsection{Existence of the semi-discrete acoustic solution on \texorpdfstring{$[0,\thh]$}{x}}

Following the approach outlined above, we begin the proof of Theorem~\ref{thm: wellposedness nonlin} by showing that the approximate initial conditions belong to $\openset$.
\begin{lemma}\label{lem: p0 in D}
	Under the assumptions of Theorem \ref{thm: wellposedness nonlin}, we have $(\ph(0),\ph_{t}(0)) = (\Ih p_0, \Ih p_1) \in \openset$.
\end{lemma}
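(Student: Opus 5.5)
The claim reduces to showing $|\kappa|\,\|\Ih p_0\|_{\Linf} < r+r_0 < 1$. The second component of the pair, $\Ih p_1$, plays no role in the definition of $\openset$. Membership in $(\Vhqp)^2$ is automatic, since $\Ih$ maps into $\Vhq$ by construction.

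\emph{Approach A (via stability of the interpolant).} Apply the $L^\infty$-stability \eqref{max stability intp} element by element. Since $p_0 = p(0)$ and $p \in \Xp \hookrightarrow C(\overlineOmega\times[0,T])$, this gives
\[
|\kappa|\,\|\Ih p_0\|_{\Linf} = |\kappa| \max_K \|\Ih^K p_0\|_{L^\infty(K)} \le C_{\mathrm{st}}\, |\kappa|\,\|p_0\|_{\Linf} \le C_{\mathrm{st}}\, |\kappa|\,\|p\|_{\COmegaT} < C_{\mathrm{st}}\, r .
\]
The stability constant $C_{\mathrm{st}}$ need not equal $1$, however. So this alone only works if $r_0$ is allowed to absorb $(C_{\mathrm{st}}-1)r$, which may conflict with $r+r_0<1$.

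\emph{Approach B (approximation estimate), which I would use instead.} Write $\Ih p_0 = p_0 + (\Ih p_0 - p_0)$ and use the triangle inequality:
\[
|\kappa|\,\|\Ih p_0\|_{\Linf} \le |\kappa|\,\|p_0\|_{\Linf} + |\kappa|\,\|p_0-\Ih p_0\|_{\Linf}.
\]
By \eqref{non-degeneracy}, the first term is at most $|\kappa|\,\|p\|_{\COmegaT} < r$. For the second term, apply \eqref{est interpolant approx Linf} on each element with $n=q+1$; this is admissible since $n - d/2>0$ for $q\ge1$, $d\le 3$. Using $p_0\in H^{q+1}(\Omega)$ gives
\[
\|p_0-\Ih p_0\|_{\Linf} = \max_K \|p_0-\Ih^K p_0\|_{L^\infty(K)} \lesssim h^{q+1-d/2}\,|p_0|_{H^{q+1}(\Omega)}.
\]
Since $q+1-d/2>0$, there is $\barh>0$ such that for $h\le \barh$ the second term is below any prescribed $r_0>0$; one can also use $\le r_0/2$, leaving room for the later steps of the proof. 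This yields $|\kappa|\,\|\Ih p_0\|_{\Linf} < r+r_0$. Choosing $r_0 \in (0,1-r)$, possible because $r<1$, gives the bound $<1$.

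The only delicate point is the order of choices. The constant $r_0$ must be fixed first, independently of $h$, with $0<r_0<1-r$; only then is $\barh$ chosen depending on $r_0$, $|\kappa|$, $\|p_0\|_{H^{q+1}}$ and the interpolation constant. The same smallness condition on $h$ must be compatible with the later restrictions on $\barh$ in the proof of Theorem~\ref{thm: wellposedness nonlin}, but for this lemma it suffices to take $h$ small enough.
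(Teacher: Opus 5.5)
Your Approach B is correct and is essentially the paper's proof. It uses the triangle inequality $|\kappa|\|\Ih p_0\|_{\Linf} \le |\kappa|\|p_0\|_{\Linf} + |\kappa|\|\Ih p_0 - p_0\|_{\Linf}$, bounds the first term by non-degeneracy, and bounds the second by \eqref{est interpolant approx Linf} with $n=q+1$, which gives $h^{q+1-d/2}$; then $\barh$ is taken small. Your explicit ordering (fix $r_0\in(0,1-r)$ first, then choose $\barh$) simply spells out the paper's ``choosing $\barh$ small enough, we can find $r_0$'' step more carefully, and your remark on why the stability bound in Approach A would not suffice is also correct.
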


\begin{proof}
Using \eqref{est interpolant approx Linf}, we can estimate  
\begin{eq}
		\| \kappa\ph(0) \|_{L^{\infty}(\Omega)} = \|\kappa\Ih p_0 \|_{L^{\infty}(\Omega)} 
		&\leq  \| \kappa p(0) \|_{L^{\infty}(\Omega)} +  | \kappa| \| \Ih p_0 - p_0 \|_{L^{\infty}(\Omega)} \\ 
		&\leq r + C h^{\qp+1-d/2}\|p_0\|_{H^{\qp+1}(\Omega)},
\end{eq}
	for some $C>0$, independent of $h$. Choosing $\barh$ small enough, we can find $r_{0}>0$ such that $ \| \kappa \ph(0) \|_{L^{\infty}(\Omega)} < r+r_{0} < 1$. Thus $(\ph(0),\ph_{t}(0)) \in \openset$.
\end{proof}
To write the semi-discrete acoustic problem as a first-order system of ODEs, we introduce the discrete multiplication operator $\Lambdah$ as 
\begin{eq}
 \left(\Lambdah(\ph) \phih, \wh\right)_{\Ltwo} =  \intO (1+\kappa \ph) \phih \wh \dx
\end{eq}
for all $(\phih, \wh) \in  \Vhqp \times \Vh$.  By Lemma \ref{lem: p0 in D}, we have $|\kappa| \| \ph(0)\|_{L^{\infty}(\Omega)} < 1$, which implies that the operator $\Lambda_{h}$ is locally invertible at $t=0$. \\
\indent Next, we define the discrete differential operator $\Ahsip: \Vhqp \rightarrow \Vhqp$, such that 
\begin{eq}
	 \left(\Ahsip \phih, \wh\right)_{\Ltwo} = \asip(\phih,\wh),
\end{eq}
for all $(\phih, \wh) \in  \Vhqp \times \Vh$.  Properties of $\Ahsip$, including coercivity and boundedness, are established in~\cite[Lemma 4.71]{dipietro}. Furthermore, we introduce the extension operator $\Gh: \VGh \rightarrow \Vhq$ for the boundary integral terms: 
\begin{eq}
(\Gh \phih, \wh)_{L^2(\Omega)} = \int_{\Gamma} \phih \wh \dG
\end{eq}
for all $\phih \in \VGh, \; \wh \in \Vh$. 
This allows us to recast the semi-discrete acoustic problem as a first-order system of ODEs:
\begin{align}\label{eq: nonlin system of odes}
	\begin{bmatrix} \ph_{t} \\ \qh_{t}  \end{bmatrix} = \begin{bmatrix} \qh \\ F(\ph, \qh) \end{bmatrix}, \qquad 	\begin{bmatrix} \ph \\ \qh \end{bmatrix}(0) = \begin{bmatrix} \Ih p_0 \\ \Ih p_1 \end{bmatrix},
\end{align}
where the function $F$ is defined as
\begin{eq}
	F(\ph, \qh)= \Lambda_{h}^{-1}(\ph)\left( \fhp - \pi_{h}( {\kappa} (\qh)^{2}) - \csq \Ahsip \ph - \beta \Ahsip \qh -\alpha \Gh( {\qh_{\vert \Gamma}} ) + \Gh \gAh \right),
\end{eq}
and where $\pi_{h}$ is the $L^{2}(\Omega)$ projection onto $\Vhqp$ (needed because $(\qh)^2 \notin \Vh$).   \\
\indent To apply the local version of the Picard--Lindel\"of theorem~\cite[Problem 7.1.3]{picardlindelofp105}  to \eqref{eq: nonlin system of odes}, we introduce a ball centered at the discrete initial data contained in $\openset$. Let the radius $\rho>0$ be small enough so that the ball 
\[
 \ball ((\ph_{0},\ph_{1})^{T}; \rho)=\left \{((\phih, \wh)^T \in (\Vhq)^2:  \|(\phih-\ph_{0}, \wh-\ph_{1})\|_{\Linf} < \rho \right\}
 \]
 satisfies
\[
 \closedball ((\ph_{0},\ph_{1})^{T}; \rho) \subset \openset.
\]
Since $|\kappa| \| \ph \|_{L^{\infty}(\Omega)} < r + r_0 < 1$ for all $(\ph, \qh)^T \in \ball$,  the operator $\Lambdah$ is invertible on $\ball$. Furthermore, we have the following bounds on the operator norm $\opnorm{\cdot}$ of its inverse:
\begin{eq}
		\frac{1}{1+r+r_{0}} \leq \opnorm{\Lambda_{h}^{-1}(\ph)} \leq \frac{1}{1-(r+r_{0})}.
\end{eq} 
The local Lipschitz continuity and boundedness of $F$ in $\closedball$ follows from the fact that $(\Vhqp)^{2}$ is finite-dimensional, and thus all norms are equivalent; see, e.g.,~\cite[Lemma 3.3]{abstractlipschitzpaper}.

\begin{proposition}
	Under the assumptions of Theorem \ref{thm: wellposedness nonlin}, $\thh>0$.
\end{proposition}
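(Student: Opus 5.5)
We prove $\thh>0$ by combining local existence from the Picard--Lindel\"of theorem with a continuity argument at $t=0$. The two conditions in the definition \eqref{eq: def thh} are handled separately: existence of a unique $C^2$ solution on some interval $[0,\tilde t]$, and the strict inequality $\tnorm{\ph(t)-\Ih p(t)}<\Czero h^{d/3+\eps}$ on an interval $[0,t_1]$. We then take $\thh\geq\min\{\tilde t,t_1\}>0$.

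\emph{Step 1 (local existence).} By Lemma~\ref{lem: p0 in D}, $(\Ih p_0,\Ih p_1)\in\openset$, so there is $\rho>0$ with $\closedball((\ph_0,\ph_1)^T;\rho)\subset\openset$. On this closed ball $\Lambdah(\ph)$ is invertible with uniformly bounded inverse, and $F$ is Lipschitz and bounded, since $(\Vhq)^2$ is finite-dimensional. The local Picard--Lindel\"of theorem \cite[Problem 7.1.3]{picardlindelofp105} then gives $\tilde t>0$ and a unique solution $(\ph,\qh)\in C^1([0,\tilde t];(\Vhq)^2)$ of \eqref{eq: nonlin system of odes} that stays in the ball. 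Since $\ph_t=\qh\in C^1$, we have $\ph\in C^2([0,\tilde t];\Vhq)$. Testing with $\wh$ and undoing the definitions of $\Lambdah$, $\Ahsip$, $\Gh$ and $\pi_h$ shows that $\ph$ solves \eqref{semidiscrete acoustic subproblem}. Here the term $\kappa(\qh)^2$ is matched through $\int_\Omega \pi_h(\kappa(\qh)^2)\wh\dx=\int_\Omega\kappa(\pht)^2\wh\dx$.

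\emph{Step 2 (initial value of the error functional).} Let $E(t)=\tnorm{\ph(t)-\Ih p(t)}^2$, with the interpolant error in place of $p-\ph$ in the definition of $\tnorm{\cdot}$. At $t=0$ the two time-integral terms vanish. Moreover $\ph(0)=\Ih p_0=\Ih p(0)$ and $\pht(0)=\Ih p_1=\Ih p_t(0)$, so every term in $E(0)$ is zero. This is where we use the choice of discrete initial data. Note that $\Ih p_t(0)=\Ih p_1$ requires the interpolant to commute with $\partial_t$. This holds because $\Ih$ is linear and time-independent and $p\in W^{1,\infty}(0,T;H^{q+1})$ is continuous into a space where $\Ih$ is defined: $H^{q+1}\hookrightarrow C(\overline\Omega)$ for the admissible $q$ and $d$.

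\emph{Step 3 (continuity).} We show $E$ is continuous on $[0,\tilde t]$. The discrete part is $C^1$ in time with values in a finite-dimensional space, on which all norms, including $\dgseminorm{\cdot}$ and $\|\cdot\|_{\LtwoG}$, are continuous. For the interpolant, $t\mapsto \Ih p(t)$ and $t\mapsto\Ih p_t(t)$ are continuous into $\Vhq$. This follows from $p\in W^{1,\infty}(0,T;H^{q+1})\cap H^2(0,T;H^q)$, which gives $p\in C([0,T];H^{q+1})$ and $p_t\in C([0,T];H^q)$, together with stability of $\Ih$ on these spaces, e.g.\ via \eqref{max stability intp} and Sobolev embedding. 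The integral terms are continuous in $t$ as integrals of integrable functions. Since $E(0)=0<\Czero^2h^{2(d/3+\eps)}$, continuity yields $t_1\in(0,\tilde t]$ with $E(t)<\Czero^2 h^{2(d/3+\eps)}$ on $[0,t_1]$.

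Hence $t_1$ lies in the set in \eqref{eq: def thh} and $\thh\geq t_1>0$. The main point requiring care is Step 3: the continuity of $t\mapsto \Ih p_t(t)$ and the exact matching $\Ih p_t(0)=\Ih p_1$. Both rest on the regularity $p\in\Xp$ and the pointwise definition of $\Ih$. Everything else is immediate from finite dimensionality.
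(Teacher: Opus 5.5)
Your proposal is correct and follows the same route as the paper. The paper also obtains local existence from the Picard--Lindel\"of theorem on a closed ball inside $\openset$, notes that $\ph-\Ih p$ and its time derivative vanish at $t=0$ because of the interpolated initial data, and uses continuity in time to keep $\tnorm{\ph(t)-\Ih p(t)}<\Czero h^{d/3+\eps}$ on a short interval.

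You add details the paper leaves implicit, namely recovering \eqref{semidiscrete acoustic subproblem} from the ODE system and the continuity of $t\mapsto \Ih p_t(t)$. For the latter, in the case $d=2$, $q=1$, the embedding $H^q(\Omega)\hookrightarrow L^\infty(\Omega)$ that your argument relies on fails. Use instead $p_t\in C([0,T];L^\infty(\Omega))$, which comes directly from $\Xp$, together with \eqref{max stability intp}.
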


\begin{proof}    
	Given that the approximate initial conditions are in $\openset$, $F$ is locally Lipschitz continuous and bounded in $\closedball$, the local version of the Picard--Lindelöf theorem~\cite[Problem 7.1.3]{picardlindelofp105} yields the local existence of a unique solution $\ph \in C^{2}(0,\tilde{t};\Vh)$ for some $\tilde{t}>0$ with $(\ph{(t)},\ph_{t}{(t)}) \in 
    \closedball \subseteq \openset$ for $t \in [0,\tilde{t}]$. \\
    \indent Since 
	$\ph(0)-\Ih p(0)=0$ and $\ph_{t}(0) - \Ih p_{t}(0) = 0$, we have 
    \begin{eq}
    \tnorm{\ph(0)- \Ih p(0)} < C_0 h^{d/3+\eps}.
    \end{eq}
    We can then use the continuity in time of the local solution and its time derivative to conclude that, up to a small, possibly $h$-dependent, time $0<\thh\leq \tilde{t}$, the solution adheres to the conditions in (\ref{eq: def thh}); that is,
      \begin{eq}
    \tnorm{\ph(t)- \Ih p(t)} < C_0 h^{d/3+\eps}, \qquad t \in [0, \thh].
    \end{eq}
    Therefore, $\thh > 0$.
\end{proof}

\subsection{Extending the existence and error bounds to \texorpdfstring{$[0,T]$}{x}}
Our next goal is to establish the $h$-uniform error bounds on $[0,\thh]$. To this end, we split the total approximation error into a discrete 
part $\ehp = \ph - \Ih p$ and an interpolation part $\eIp = \Ih p - p$:
\[
\ph-p = (\ph-\Ih p)+ (\Ih p -p) = \ehp + \eIp.
\]
We introduce a defect $\deltah$ so that $\Ih p$ satisfies
\begin{eq}\label{eq: nonlin projection equation}
		&\begin{multlined}[t]\intO (1+\kappa \ph)\Ih p_{tt} \wh \dx + \intO \kappa \ph_{t} \Ih p_{t} \wh \dx + \csq\asip( \Ih p, \wh) + \beta\asip( \Ih p_{t},\wh) \\+  \int_{\Gamma} \alpha \Ih p_{t} \wh \dG
		 =  \intO \fhp \wh \dx + \intG \gAh \wh \dG+ \intO \delta_{h} \wh \dx,
                 \end{multlined} 
\end{eq}
for all $\wh \in \Vhqp$, $t \in [0,T]$, where the defect $\deltah$ satisfies
\begin{eq} \label{defect 1}
\begin{multlined}[t]	\intO \deltah \wh \dx =  \intO \left( (1+\kappa \ph) \Ih p_{tt} - (1+\kappa p)p_{tt} \right) \wh \dx + \intO \left( \kappa \ph_{t}\Ih p_{t} - \kappa p_{t}^{2}\right) \wh \dx \\
	 + \csq\asip(\Ih p-p,\wh) + \beta\asip(\Ih p_{t} - p_{t},\wh)+ \int_{\Gamma} \alpha (\Ih p_{t} - p_{t})\wh \dG \\ + \intO (\fp-\fhp)\wh \dx + \intG (\gA-\gAh) \wh \dG
\end{multlined}	
\end{eq}
for all $\wh \in \Vhqp$, $t \in [0,T]$. By subtracting \eqref{eq: nonlin projection equation} from the first equation in \eqref{semidiscrete acoustic subproblem}, we see that the discrete pressure error $\ehp$ solves
\begin{align}\label{eq: weak form error}
	\begin{split}
		\intO (1+\kappa \ph)\ehp_{tt} \wh \dx + \intO \kappa \ph_{t} \ehp_{t} \wh \dx + \csq\asip( \ehp, \wh) + \beta\asip( \ehp_{t},\wh) + &\int_{\Gamma} \alpha \ehp_{t} \wh \dG \\
		&=  - \intO \deltahp \wh \dx
	\end{split}
\end{align}
for all $\wh \in \Vh$, $t \in [0,T]$.  To estimate $\ehp$, we require a bound on the defect term $\intO \deltahp \wh \dx $; the following uniform bound on $\pht$ will be useful for this purpose.

\begin{lemma}\label{lem: unif bound pht}
	Let the assumptions of Theorem \ref{thm: wellposedness nonlin} hold. Then $\ph_{t}$ is uniformly bounded on $[0, \thh]$ in the following sense:
	\begin{align} \label{bound pth}
			\|\ph_{t}\|_{L^2(0, \thh; L^{\infty}(\Omega))} \lesssim 1.
	\end{align}
\end{lemma}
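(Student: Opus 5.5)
We split $\pht = \ehp_t + \Ih p_t$ and bound the two pieces separately in $L^2(0,\thh;\Linf)$. The interpolant part is harmless. By the stability estimate \eqref{max stability intp} of Lemma~\ref{lem:  local interpolant properties},
\[
\|\Ih p_t(t)\|_{\Linf} \lesssim \|p_t(t)\|_{\Linf}.
\]
Since $p \in \Xp$ gives $p_t \in L^\infty(0,T;H^{q+1}(\Omega))$, and $H^{q+1}(\Omega)\hookrightarrow \Linf$ for $q\ge1$, $d\le 3$, we obtain $\|\Ih p_t\|_{L^2(0,\thh;\Linf)} \lesssim \sqrt{T}\,\|p\|_{\Xp}$, uniformly in $h$ and $\thh$.

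For the discrete error part we use the inverse inequality \eqref{inverse inequality}. Apply it elementwise with $\ell=\infty$ and a suitable $\ell'$, together with the discrete embedding \eqref{discrete embedding L3}, which converts $\dgseminorm{\cdot}$ plus the boundary $L^2$ norm into $L^3(\Omega)$. For any $K$, taking $\ell'=3$,
\[
\|\ehp_t\|_{L^\infty(K)} \le \invconstant h_K^{-d/3}\|\ehp_t\|_{L^3(K)} \le \invconstant h_K^{-d/3}\|\ehp_t\|_{L^3(\Omega)}.
\]
By quasi-uniformity this gives $\|\ehp_t\|_{\Linf}\lesssim h^{-d/3}\|\ehp_t\|_{L^3(\Omega)}$. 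Then \eqref{discrete embedding L3} yields
\[
\|\ehp_t\|_{\Linf}\lesssim h^{-d/3}\bigl(\dgseminorm{\ehp_t}+h^{\frac{1-\ell}{\ell}}\Ltwonorm{\ehp_t}{\Gamma}\bigr).
\]
Squaring and integrating over $(0,\thh)$, the definition \eqref{eq: def thh} of $\thh$ controls $\beta\int_0^{\thh}\dgseminorm{\ehp_t}^2\ds$ and $\int_0^{\thh}\Ltwonorm{\ehp_t}{\Gamma}^2\ds$ by $\Czero^2 h^{2d/3+2\eps}$, since $\beta>0$. The volume contribution is therefore $h^{-2d/3}h^{2d/3+2\eps}=h^{2\eps}\lesssim 1$.

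\textbf{The main obstacle} is the boundary term, which carries an additional factor $h^{2(1-\ell)/\ell}$ with $\ell=\max\{3d/(d+3),1\}$. For $d=1$ we have $\ell=1$ and the factor is trivial. For $d=2,3$ this factor is negative in the exponent ($(1-\ell)/\ell=-1/6$ for $d=2$ and $-1/3$ for $d=3$), so naively we get $h^{2\eps-2(\ell-1)/\ell}$.

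There are two ways to handle it. The first option is to choose $\eps$ in \eqref{def eps} large enough, e.g. $\eps\ge(\ell-1)/\ell$, which appears compatible with the later error bound $h^{q}$ provided $q>d/3+\eps$; this is presumably why $q\ge2$ is required for $d=3$. The second option is to avoid \eqref{discrete embedding L3} for the boundary piece. We can bound the boundary contribution more directly via a discrete trace/inverse argument: $\|\ehp_t\|_{\Linf}$ is already controlled through the full dG norm, and the boundary $L^2$ term enters at the same scaling as a face jump in $\dgnormell{\cdot}{2}$. Using Lemma~\ref{lem: discr embedding full dG norm} with $\ell=2$ and $s$ large, with $h^{-1/2}\Ltwonorm{\ehp_t}{\Gamma}$ then absorbed, costs only $h^{-1}$ instead of $h^{-2(\ell-1)/\ell}$, which is worse.

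I would therefore commit to the first option, fixing $\eps$ so that $2\eps\ge 2(\ell-1)/\ell$ while keeping $d/3+\eps<q$. Combining both parts via the triangle inequality in $L^2(0,\thh;\Linf)$ then gives \eqref{bound pth} with a constant depending only on $\|p\|_{\Xp}$, $T$, $\beta$, $\Czero$, and not on $h$ or $\thh$.
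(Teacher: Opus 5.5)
Your proposal is correct and follows the paper's argument. You split $\pht = (\pht-\Ih p_t)+\Ih p_t$, use $L^\infty$-stability of the interpolant, and pass from $L^\infty$ to $L^3$ with the inverse inequality at a cost of $h^{-d/3}$. Then you apply the discrete embedding \eqref{discrete embedding L3}, bound the time integrals via the definition of $\thh$, and take $\eps=(\ell-1)/\ell$ so that the extra boundary weight $h^{2(1-\ell)/\ell}$ cancels; the ``second option'' you sketch is not needed.
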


\begin{proof} 
By the inverse inequality \eqref{inverse inequality} and the stability of the interpolant in $\Linf$, we have
		\begin{eq}
	\|\pht \|_{L^2(0, \thh; \Linf)} 
		\leq&\, \|\pht-\Ih \pt \|_{L^2(0, \thh; \Linf)} + \|\Ih \pt\|_{L^2(0, \thh; \Linf)} \\
	\lesssim& h^{-d/3} \|\pht-\Ih \pt \|_{L^2(0, \thh; L^{3}(\Omega))}+ \| \pt\|_{L^2(0, \thh; \Linf)}.
\end{eq}
Then by the discrete embedding inequality \eqref{discrete embedding L3}, 
we obtain 
\begin{eq}
	\|\pht\|^2_{L^2(0, \thh; \Linf)}   
	\lesssim& \begin{multlined}[t] h^{-2d/3} \left(\int_0^{\thh} \dgseminorm{\pht-\Ih \pt}^2 \ds +{h^{2\frac{1-\ell}{\ell}}} \int_0^{\thh} \Ltwonorm{\pht - \Ih \pt }{\Gamma}^2 \ds \right)
\\	+\|\pt\|^2_{L^2(0, \thh; \Linf)}.
    \end{multlined}
\end{eq}
By the definition of $\thh$ in \eqref{eq: def thh}, we have $\int_0^{\thh} \dgseminorm{(\pht-\Ih \pt}^2 \ds+\int_0^{\thh} \Ltwonorm{\pht - \Ih \pt }{\Gamma}^2 \ds \leq \Czero^2  h^{2\left(d/3+\eps\right)}$. 
The claim thus follows by choosing 
\begin{eq}\label{def eps}
	\eps = \frac{\ell-1}{\ell},
\end{eq}
where we recall that  $\ell= \max\left\{\frac{3d}{d+3}, 1 \right \}$.
\end{proof}

Next, we estimate the defect term \eqref{defect 1}. 

\begin{lemma}\label{lem: bound defect}
Let the assumptions of Theorem \ref{thm: wellposedness nonlin} hold. Then 
\begin{eq}
\left| \intt \intO \deltahp \wh \dxs \right|  \lesssim&\, \begin{multlined}[t] h^{2\qp}+		\|\ehp\|^2_{L^\infty(0,t; \Ltwo)} +\int_{0}^{t}\Ltwonorm{\ehp_{t}}{\Omega}^{2} \ds\\\hspace*{2cm}+ \gamma \intt \left( \|\wh\|^2_{\Ltwo} +\dgseminorm{\wh}^2 +  \|\wh\|_{\LtwoG}^2 \right)\ds,
		\end{multlined}
\end{eq}
for any $\gamma>0$ and $t \in [0,\thh]$.
\end{lemma}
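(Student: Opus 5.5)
I would bound the defect by going through the right-hand side of \eqref{defect 1} term by term, integrating in time over $(0,t)$, $t\le\thh$, and using Young's inequality with weight $\gamma$ on the test function. The nonlinear terms need an algebraic rewriting first. For the $p_{tt}$-terms I would use
\begin{eq}
(1+\kappa \ph)\Ih p_{tt}-(1+\kappa p)p_{tt}=(1+\kappa\ph)(\Ih p_{tt}-p_{tt})+\kappa(\ehp+\eIp)p_{tt}.
\end{eq}
For the quadratic term I would use
\begin{eq}
\kappa\ph_t\Ih p_t-\kappa p_t^2=\kappa\ph_t(\Ih p_t-p_t)+\kappa(\ehp_t+\eIp_t)p_t.
\end{eq}

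For the $p_{tt}$-terms I treat the two pieces separately.
\begin{itemize}
\item The piece $(1+\kappa\ph)(\Ih p_{tt}-p_{tt})$ is handled as follows. On $[0,\thh]$ the solution stays in $\openset$, so $|\kappa|\|\ph\|_{L^\infty}<1$. Hence this piece is bounded by $\|\Ih p_{tt}-p_{tt}\|_{\Ltwo}\|\wh\|_{\Ltwo}$. Since $p\in H^2(0,T;H^q)$, Lemma~\ref{lem:  local interpolant properties} gives $h^q$ in $L^2(0,T;\Ltwo)$, and Young's inequality produces $h^{2q}+\gamma\int_0^t\|\wh\|^2_{\Ltwo}$.
\item The piece $\kappa(\ehp+\eIp)p_{tt}$ is where only $p_{tt}\in L^2(0,T;L^\infty)$ is available. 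Estimating pointwise in time and then by H\"older in time gives
\begin{eq}
\int_0^t\|p_{tt}\|_{L^\infty}\|\ehp\|_{\Ltwo}\|\wh\|_{\Ltwo}\,\textup{d}s\lesssim \|\ehp\|^2_{L^\infty(0,t;\Ltwo)}\|p_{tt}\|^2_{L^2L^\infty}+\gamma\int_0^t\|\wh\|^2_{\Ltwo}\,\textup{d}s.
\end{eq}
This explains the $L^\infty$-in-time norm of $\ehp$ in the statement. The $\eIp$ part is $O(h^{q+1})$.
\end{itemize}

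The quadratic term also splits into two pieces.
\begin{itemize}
\item The piece $\kappa(\ehp_t+\eIp_t)p_t$ uses $p_t\in L^\infty(L^\infty)$. It gives $\int_0^t\|\ehp_t\|^2_{\Ltwo}+h^{2q}+\gamma\int\|\wh\|^2$.
\item The piece $\kappa\ph_t(\Ih p_t-p_t)$ is the delicate one, since $\ph_t$ is not known to be bounded uniformly in time. Here I invoke Lemma~\ref{lem: unif bound pht}. I bound the integrand by $\|\ph_t\|_{L^\infty}\|\eIp_t\|_{\Ltwo}\|\wh\|_{\Ltwo}$. I take $\|\eIp_t\|_{L^\infty(0,T;\Ltwo)}\lesssim h^{q}$ (indeed $h^{q+1}$) from $p\in W^{1,\infty}(H^{q+1})$. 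Young's inequality then gives
\begin{eq}
\int_0^t\|\ph_t\|_{L^\infty}^2\|\eIp_t\|_{\Ltwo}^2\,\textup{d}s+\gamma\int_0^t\|\wh\|^2_{\Ltwo}\,\textup{d}s\lesssim h^{2q}+\gamma\int_0^t\|\wh\|^2_{\Ltwo}\,\textup{d}s.
\end{eq}
\end{itemize}
This step is the main obstacle, and it is exactly why the bootstrap definition of $\thh$ and the $L^3$ discrete embedding were introduced.

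The linear terms are routine.
\begin{itemize}
\item For $\csq\asip(\Ih p-p,\wh)+\beta\asip(\Ih p_t-p_t,\wh)$ I use Remark~\ref{remark}, which gives the bound $\dgstarseminorm{\eIp}\dgseminorm{\wh}$. By \eqref{est interpolant dGstar} with $n=q+1$, this is $\lesssim h^{q}\|p\|_{H^{q+1}}\dgseminorm{\wh}$, and similarly for $p_t$. Since $p\in W^{1,\infty}(H^{q+1})$, Young's inequality yields $h^{2q}+\gamma\int\dgseminorm{\wh}^2$.
\item For the boundary term $\alpha\int_\Gamma \eIp_t\wh$ I use the continuous trace inequality (Lemma~\ref{lem: cont trace ineq}) elementwise. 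This gives $\|\eIp_t\|_{\LtwoG}\lesssim h^{q+1/2}$, and the term is bounded by $h^{2q}+\gamma\int\|\wh\|^2_{\LtwoG}$.
\item The source-approximation terms follow directly from \eqref{approx properties source terms} with Young's inequality.
\end{itemize}

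Summing all contributions and absorbing constants (with $\gamma$ renamed) gives the claim.
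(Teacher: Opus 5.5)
Your proposal is correct and follows the paper's proof essentially step by step. It uses the same splitting of the nonlinear differences into $(1+\kappa\ph)\eIp_{tt}+\kappa(\ehp+\eIp)p_{tt}$ and $\kappa\ph_t\eIp_t+\kappa(\ehp_t+\eIp_t)p_t$, Lemma~\ref{lem: unif bound pht} for the $\ph_t\eIp_t$ term, and Remark~\ref{remark} with \eqref{est interpolant dGstar} for the $\asip$ terms; the only harmless deviation is in the boundary term, where you keep $\gamma\int_0^t\|\wh\|^2_{\LtwoG}$ via $\|\eIp_t\|_{\LtwoG}\lesssim h^{q+1/2}$, while the paper also applies the discrete trace inequality to $\wh$ to obtain $\gamma\int_0^t\|\wh\|^2_{\Ltwo}$, and both forms are allowed by the lemma's right-hand side.
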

\begin{proof}
We begin by rewriting \eqref{defect 1} as
\begin{eq}
	\intO \deltahp \wh \dx 
		=&\, \begin{multlined}[t] \intO \left( (1+\kappa \ph) \eIp_{tt} + \kappa(\ehp+\eIp)p_{tt} \right) \wh \dx 
		 + \intO \left( \kappa \ph_{t}\eIp_{t} + \kappa (\ehpt+\eIp_{t})p_{t} \right) \wh \dx \\
		 +\csq\asip( \eIp,\wh) + \beta\asip( \eIp_{t},\wh)+ \int_{\Gamma} \alpha {\eIp_t} \wh \dG +   \intO (\fp-\fhp)\wh \dx \\+   \intG (\gA-\gAh)\wh \dG.
		\end{multlined}
\end{eq}
We then integrate over $(0,t)$ and estimate each term on the right-hand side in turn. Throughout, we apply H\"older's inequality, the interpolation estimates of Lemma~\ref{lem: local interpolant properties}, and Young's inequality $AB \leq \frac{1}{4\gamma}A^2 + \gamma B^2$ for $\gamma > 0$. We first have 
\begin{eq}
	 \intt \intO (1+\kappa \ph) \eIp_{tt}  \wh \dxs 
	  \leq&\, \|1+\kappa \ph\|_{\LinftLinf}\| \eIp_{tt}\|_{\LtwotLtwo}\|\wh\|_{\LtwotLtwo}\\
	  	  \lesssim&\, (1+r+r_0)^2\| \eIp_{tt}\|^2_{\LtwotLtwo}+ \gamma \|\wh\|^2_{\LtwotLtwo}\\
	  	   \lesssim&\,h^{2q}\|\ptt\|^2_{L^2(0,T; H^q(\Omega))}+ \gamma \|\wh\|^2_{\LtwotLtwo} 
\end{eq}
since $\|1+\kappa \ph\|_{\LinftLinf} \lesssim 1+r+r_0$.  Next we estimate
\begin{eq}
\intt&\intO \kappa(\ehp+\eIp)p_{tt}  \wh \dxs \\
	\lesssim&\, \|\ptt\|^2_{L^2(0,T; \Linf)}(\|\ehp\|^2_{L^\infty(0,t; \Ltwo)} +h^{2q} \|p\|^2_{L^\infty(0,T; H^q(\Omega))} )  +\gamma \|\wh\|^2_{\LtwotLtwo}.
\end{eq}
Similarly,
\begin{eq}
\int_0^{t}	\intO  \kappa \ph_{t}\eIp_{t} \wh \dxs  \leq&\,|\kappa| \|\pht\|_{L^2(0,t; \Linf)} \|\eIp_t\|_{L^\infty(0,t; \Ltwo )}\|\wh\|_{L^2(0,t; \Ltwo )} \\
\lesssim&\, \|\eIp_t\|_{L^\infty(0,t; \Ltwo )}^2 + \gamma \|\wh\|^2_{\LtwotLtwo} \\
\lesssim&\,  h^{2q} \|\pt\|^2_{L^{\infty}(0,T; H^q(\Omega))} + \gamma \|\wh\|^2_{\LtwotLtwo}
\end{eq}
since $\|\pht\|_{L^2(0,\thh; \Linf)} \lesssim 1$ by Lemma~\ref{lem: unif bound pht}. We next bound
\begin{eq}
		 &\intt \intO  \kappa (\ehpt+\eIp_{t})p_{t} \wh \dxs \\
		 \lesssim&\, \|\pt\|_{\LinfLinf}^2(\|\ehpt\|^2_{\LtwotLtwo} +h^{2q} \|\pt\|^2_{L^2(0,T; H^q(\Omega))} )  + \gamma \intt \|\wh\|^2_{\Ltwo} \ds.
\end{eq}
By the boundedness of $\asip(\cdot, \cdot)$ discussed in Remark~\ref{remark}, we further have
\begin{eq}
	\intt \left(	\csq\asip( \eIp,\wh) + \beta\asip( \eIp_{t},\wh) \right) \ds \lesssim&\, \intt \dgstarseminorm{\eIp}^2 \ds+\intt \dgstarseminorm{\eIp_t}^2\ds+\gamma \intt \dgseminorm{\wh}^2 \ds \\
	\lesssim&\, h^{2q}+\gamma \intt \dgseminorm{\wh}^2 \ds,
\end{eq}
where we have applied \eqref{est interpolant dGstar} to both $\eIp$ and $\eIp_t$.
Next, by the continuous trace inequality in Lemma~\ref{lem: cont trace ineq} and the discrete trace inequality \eqref{face element trace ineq},
\begin{eq}
\intt	\int_{\Gamma} \alpha {\eIp_t} \wh \dGs =  &\,  \sumFbnd \intt	\intF  \alpha \eIp_t \wh \dGs \\
\lesssim&\,  \sumFbnd \intt \|\eIp_t\|_{\LtwoF}  \|\wh\|_{\LtwoF} \ds \\
\lesssim&\, \sumFbnd  \intt \left(\|\nabla \eIp_t\|_{\LtwoK}+\hK^{-1}\| \eIp_t\|_{\LtwoK} \right)^{1/2} \| \eIp_t\|^{1/2}_{\LtwoK}\, \hK^{-1/2} \|\wh\|_{\LtwoK} \ds.
\end{eq}
Then, again by the approximation properties of the interpolant, we conclude that 
\begin{eq}
	\intt	\int_{\Gamma} \alpha \eIp_t \wh \dGs
	\lesssim&\, \sumFbnd  \intt \hK^{q/2}\hK^{(q+1)/2} \|\pt\|_{H^{q+1}(K)}\, \hK^{-1/2} \|\wh\|_{\LtwoK} \ds \\
	\lesssim&\, h^{2q}+ \gamma \|\wh\|_{L^2(0,t; \Ltwo)}^2.
\end{eq}
Finally, using the assumed accuracy of $\fhp$ and $\gAh$ in \eqref{approx properties source terms},
\begin{eq}
	&\intt \intO (\fp-\fhp)\wh \dxs+   \intt\intG (\gA-\gAh)\wh \dGs \\
	\lesssim&\, h^{2q}+ \gamma \intt \left( \|\wh\|_{\Ltwo}^{2}+\|\wh\|_{\LtwoG}^{2}\right) \ds.
\end{eq}
Combining all of the above estimates yields the result.
	
\end{proof}
Equipped with Lemma~\ref{lem: bound defect}, we now derive a uniform estimate of the discrete acoustic error.
\begin{proposition}\label{prop: nonlin error bound}
	Under the assumptions of Theorem~\ref{thm: wellposedness nonlin}, $\ehp=\ph - \Ih p$ satisfies
\begin{eq}
	\begin{multlined}[t]	\max_{t \in [0, \thh]}\Ltwonorm{\ehpt(t)}{\Omega}^{2} + \max_{t \in [0, \thh]}\dgseminorm{\ehp(t)}^{2} +\max_{t \in [0, \thh]}\Ltwonorm{\ehp(t)}{\Gamma}^{2} + \int_{0}^{\thh} \dgseminorm{\ehpt}^{2} \ds \\+ \int_{0}^{\thh} \Ltwonorm{\ehp_{t}}{\Gamma}^{2} \ds \lesssim h^{2\qp},
   \end{multlined} 
\end{eq}
where
	the hidden constant depends on $\|p\|_{\Xp}$ and $T$, but not on $h$.
\end{proposition}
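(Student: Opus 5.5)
We test the error equation \eqref{eq: weak form error} with $\wh = \ehpt(t) \in \Vh$ and integrate over $(0,t)$ for $t \in [0,\thh]$. This is the standard energy approach.

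The first step treats the nonlinear mass term. Since
\[
\intO (1+\kappa \ph)\ehp_{tt}\ehpt \dx = \tfrac12 \timeder \intO (1+\kappa\ph)(\ehpt)^2\dx - \tfrac12\intO \kappa \pht (\ehpt)^2 \dx,
\]
the term with $\intO \kappa \pht \ehpt \ehpt \dx$ combines with this into a remainder of size $\tfrac12|\kappa|\intO \pht(\ehpt)^2\dx$. Because $(\ph,\pht)$ stays in $\overline{\mathcal{A}}_h$ on $[0,\thh]$, we have $1+\kappa\ph \geq 1-(r+r_0) > 0$. Hence the left-hand side contains the equivalent of $\Ltwonorm{\ehpt(t)}{\Omega}^2$.

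The symmetric form gives $\csq\asip(\ehp,\ehpt) = \tfrac{\csq}{2}\timeder \asip(\ehp,\ehp)$. By Lemma~\ref{lemma: coercivity sip} with $D\equiv 1$, this controls $\dgseminorm{\ehp(t)}^2$ after integration. Similarly, $\beta\asip(\ehpt,\ehpt) \gtrsim \dgseminorm{\ehpt}^2$ yields the dissipative term $\beta\intt\dgseminorm{\ehpt}^2\ds$, and the boundary term gives $\alpha\intt\Ltwonorm{\ehpt}{\Gamma}^2\ds$. The quantity $\Ltwonorm{\ehp(t)}{\Gamma}^2$ is not produced directly. We recover it from $\Ltwonorm{\ehp(t)}{\Gamma}^2 = 2\intt\intG \ehp\ehpt\dGs$, bounded by $\intt\Ltwonorm{\ehp}{\Gamma}^2\ds + \intt \Ltwonorm{\ehpt}{\Gamma}^2\ds$. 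At $t=0$ all discrete errors vanish by the choice of initial data.

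The main obstacle is the cubic term $\intt\intO |\kappa||\pht|(\ehpt)^2\dxs$. A pointwise-in-time $L^\infty$ bound on $\pht$ is not available. I would instead use Lemma~\ref{lem: unif bound pht}, bounding the integrand by
\[
\|\pht(s)\|_{\Linf}\,\Ltwonorm{\ehpt(s)}{\Omega}^2,
\]
with $\|\pht\|_{\Linf}\in L^2(0,\thh)\subset L^1(0,\thh)$ uniformly in $h$. This is a valid Gr\"onwall coefficient, so the term can be absorbed via Gr\"onwall's inequality with the integrable weight $1+\|\pht(s)\|_{\Linf}$.

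The defect is handled by Lemma~\ref{lem: bound defect} with $\wh=\ehpt$ and $\gamma$ small. The $\gamma\intt\dgseminorm{\ehpt}^2$ and $\gamma\intt\Ltwonorm{\ehpt}{\Gamma}^2$ contributions are absorbed into the $\beta$- and $\alpha$-terms on the left. The $\gamma\intt\Ltwonorm{\ehpt}{\Omega}^2$ contribution, together with $\intt\Ltwonorm{\ehpt}{\Omega}^2\ds$, is left for Gr\"onwall.

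The remaining term $\|\ehp\|^2_{L^\infty(0,t;\Ltwo)}$ is controlled through $\ehp(s)=\int_0^s\ehpt$, so that $\|\ehp\|^2_{L^\infty(0,t;\Ltwo)} \leq t\intt\Ltwonorm{\ehpt}{\Omega}^2\ds$. Collecting everything gives
\[
E(t) \lesssim h^{2q} + \intt (1+\|\pht(s)\|_{\Linf})E(s)\ds,
\]
where $E$ is the energy. Gr\"onwall's inequality then yields $E(t)\lesssim h^{2q}\exp(C(T+\sqrt{T}\,\|\pht\|_{L^2(0,\thh;\Linf)}))$. This bound is uniform in $h$ and in $\thh$, and it holds for every $t\le\thh$, so the maximum over $[0,\thh]$ is bounded by $h^{2q}$. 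The hidden constants depend on $\|p\|_{\Xp}$ and $T$, but not on $C_0$, since Lemma~\ref{lem: unif bound pht} only uses $C_0$ through $C_0^2h^{2\eps}h^{2(1-\ell)/\ell}\lesssim 1$, which holds once $h$ is small.
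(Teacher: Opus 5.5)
Your proof is correct and has the same skeleton as the paper's. You test the error equation with $\ehpt$, use coercivity and $1+\kappa\ph\ge 1-(r+r_0)$, invoke Lemma~\ref{lem: bound defect} with small $\gamma$, control $\|\ehp\|_{L^\infty(0,t;\Ltwo)}$ by $\sqrt{t}\,\|\ehpt\|_{L^2(0,t;\Ltwo)}$, and close with Gr\"onwall.

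The one genuine difference is the cubic term $\intt\intO|\kappa\pht|(\ehpt)^2\dxs$. The paper applies Young's inequality to $\|\pht\|_{\Linf}\Ltwonorm{\ehpt}{\Omega}^2$, splitting it into $\Ltwonorm{\ehpt}{\Omega}^2$ plus $\gamma\|\pht\|_{\Linf}^2\Ltwonorm{\ehpt}{\Omega}^2$. It bounds the time integral of the second piece by $\gamma\|\pht\|^2_{L^2(0,\thh;\Linf)}\max_{s\le t}\Ltwonorm{\ehpt(s)}{\Omega}^2$ and absorbs it after taking the maximum over $[0,\tau]$. It then finishes with a constant-coefficient Gr\"onwall inequality. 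You instead keep $1+\|\pht(s)\|_{\Linf}$ as a time-dependent Gr\"onwall weight. This is legitimate, since $E$ is continuous and the weight lies in $L^1(0,\thh)$ uniformly in $h$. Your route is a bit shorter, with no max-absorption step, and for this term it needs only $L^1$-in-time control of $\|\pht\|_{\Linf}$. The paper's route keeps the Gr\"onwall step elementary. Recovering $\Ltwonorm{\ehp(t)}{\Gamma}^2$ through Gr\"onwall rather than via $\|\ehp\|_{L^\infty(0,t;\LtwoG)}\lesssim\sqrt{T}\|\ehpt\|_{L^2(0,t;\LtwoG)}$ is an inessential variation.

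Your closing remark about $\Czero$ is wrong, though it does not affect the proposition. With $\eps=(\ell-1)/\ell$ from \eqref{def eps}, one has $h^{2\eps}h^{2(1-\ell)/\ell}=1$ exactly; this is precisely why $\eps$ was chosen that way. So the boundary contribution in the proof of Lemma~\ref{lem: unif bound pht} is of size $\Czero^2$ for every $h$, not small for small $h$. For $d=1$, where $\ell=1$ and $\eps=0$, the interior contribution $\Czero^2h^{2\eps}$ is also just $\Czero^2$. Hence $\|\pht\|_{L^2(0,\thh;\Linf)}\lesssim\Czero+\|\pt\|_{L^2(0,T;\Linf)}$, and your Gr\"onwall factor depends on $\Czero$, as do the paper's constants. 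This is harmless: $\Czero$ is fixed before $\barh$, and since $q>d/3+\eps$ one still gets $C(\Czero)h^{q}<\Czero h^{d/3+\eps}$ for $h$ small enough. That is all the continuation argument in the proof of Theorem~\ref{thm: wellposedness nonlin} requires.
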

\begin{proof}
Choosing $\wh=\ehp_{t}(t) \in \Vhqp$ in \eqref{eq: nonlin projection equation} yields
\begin{eq}
&	\begin{multlined}[t]	\frac{1}{2}\timeder \intO (1+\kappa \ph)(\ehp_{t})^{2} \dx 
		+ \frac{1}{2}\timeder \csq\asip( \ehp,\ehp) +\beta \asip( \ehp_{t},\ehp_{t}) + \int_{\Gamma}  \alpha  (\ehp_{t})^{2}	 \dG
\end{multlined}\\
		=&\,  - \intO \kappa \ph_{t} (\ehp_{t})^{2} \dx +\frac{1}{2} \intO \kappa \ph_{t} (\ehp_{t})^{2} \dx 
		- \intO \deltahp \ehp_{t} \dx. 
\end{eq}
Integrating over $(0,t)$, making use of the coercivity of $\asip(\cdot, \cdot)$ established in Lemma~\ref{lemma: coercivity sip}, the bound $1+\kappa \ph \geq 1-(r+r_{0})>0$ (since $\ph(t) \in {\openset}$), and the fact that the approximate initial conditions interpolate the exact ones yields
\begin{eq}\label{eq: nonlin error first form}
		&\Ltwonorm{\ehp_{t}(t)}{\Omega}^{2} + \csq \dgseminorm{\ehp(t)}^{2} + \beta \intt \dgseminorm{\ehp_{t}(s)}^{2} \ds + \alpha \intt \Ltwonorm{\ehp_{t}(s)}{\Gamma}^{2} \ds \\
			\lesssim&\, 
			\int_{0}^{t} \intO |\kappa \ph_{t}| (\ehp_{t})^{2}  \dx \ds +
			\left|\int_{0}^{t}\intO  \deltahp \ehpt \dxs \right|.
\end{eq}
To estimate the first term on the right-hand side, we use  H\"older's inequality in space and Young's inequality:
	\begin{eq}
	    \int_{0}^{t} \intO | \kappa \ph_{t}| (\ehp_{t})^{2}  \dxs 
        \lesssim&\, \intt \left(\|\ehp_{t}\|^2_{\Ltwo}+\gamma \|\ph_{t}\|_{\Linf}^2\|\ehp_{t}\|^2_{\Ltwo} \right)\ds \\
        \lesssim&\, \|\ehp_{t}\|^2_{L^{2}(0,t;L^{2}(\Omega)} + \gamma \max_{0 \leq s \leq t}\|\ehp_{t}(s)\|^2_{\Ltwo}
	\end{eq}   
for any $\gamma>0$, where in the last step we have applied Lemma~\ref{lem: unif bound pht} to bound $\|\ph_{t}\|_{L^2(0, \thh; \Linf)} \lesssim 1$. The second term on the right-hand side of \eqref{eq: nonlin error first form} can be estimated using Lemma~\ref{lem: bound defect}:
\begin{eq}
\left| \intt \deltahp \ehpt \dxs \right|  \lesssim&\, \begin{multlined}[t] h^{2\qp}+  \|\ehp\|^2_{L^\infty(0,t; \Ltwo)} \\+  \intt \|\ehpt(s)\|^2_{\Ltwo} \ds+\gamma \intt \dgseminorm{\ehpt(s)}^2 \ds+ \gamma \intt \|\ehpt(s)\|^2_{\LtwoG}\ds,
\end{multlined}
\end{eq}
where we can further bound
\begin{eq}
    \|\ehp\|_{L^\infty(0,t; \Ltwo)} \lesssim \sqrt{T}  \|\ehp_t\|_{L^2(0,t; \Ltwo)}.
\end{eq}
In this manner, from \eqref{eq: nonlin error first form}, we arrive at 
	\begin{eq} \label{interim}
		& \Ltwonorm{\ehp_{t}(t)}{\Omega}^{2} + \dgseminorm{\ehp(t)}^{2} + \int_{0}^{t} \dgseminorm{\ehp_{t}}^{2} \ds + \int_{0}^{t} \Ltwonorm{\ehp_{t}}{\Gamma}^{2}\ds \\
        \lesssim&\, \begin{multlined}[t] 
		h^{2\qp} +\int_{0}^{t} \Ltwonorm{\ehp_{t}(s)}{\Omega}^{2} \ds +\gamma \intt \dgseminorm{\ehpt(s)}^2 \ds+ \gamma \intt \|\ehpt(s)\|^2_{\LtwoG}\ds \\
      + \gamma \max_{0 \leq s \leq t}\|\ehp_{t}(s)\|^2_{\Ltwo}
        \end{multlined}
	\end{eq}
for all $t \in [0, \thh]$. Taking the maximum of \eqref{interim} over all $t \in [0, \tau]$ for $\tau \in [0, \thh]$ and then reducing $\gamma$ so that the $\gamma$ terms can be absorbed by the left-hand side yields 
\begin{eq}
		& \Ltwonorm{\ehp_{t}(\tau)}{\Omega}^{2} + \dgseminorm{\ehp(\tau)}^{2}+ \int_{0}^{\tau} \dgseminorm{\ehp_{t}}^{2} \ds + \int_{0}^{\tau} \Ltwonorm{\ehp_{t}}{\Gamma}^{2}\ds \\
        \lesssim&\, \begin{multlined}[t] 
		h^{2\qp} +\int_{0}^{\tau} \Ltwonorm{\ehp_{t}(s)}{\Omega}^{2} \ds.
        \end{multlined}
	\end{eq}
An application of  \Gronwall's inequality together with $\|\ehp\|_{L^\infty(0,t; \LtwoG)} \lesssim \sqrt{T} \|\ehp_t\|_{L^2(0,t; \LtwoG)}$ completes the proof.
\end{proof}

With Proposition \ref{prop: nonlin error bound}, we now have the most important ingredient to prove Theorem~\ref{thm: wellposedness nonlin}.

\begin{proof}[Proof of Theorem \ref{thm: wellposedness nonlin}]
We can show that $(\ph(\thh),\ph_{t}(\thh)) \in \openset$. Indeed, by involving the interpolant, we obtain
   \begin{eq}\label{eq: wellposedness estimate}
			\|\kappa \ph(\thh) \|_{L^{\infty}(\Omega)} 
			\lesssim&  \|\kappa (\ph-\Ih p)(\thh) \|_{L^{\infty}(\Omega)} + \|\kappa \Ih p(\thh) \|_{L^{\infty}(\Omega)} \\
			\lesssim&\, |\kappa| h^{-d/3}\| (\ph-\Ih p)(\thh) \|_{L^{3}(\Omega)} + |\kappa|\|p(\thh)\|_{\Linf}.
	\end{eq}
	From here, a use of the discrete embedding \eqref{discrete embedding L3} leads to
		\begin{eq}\label{eq: wellposedness estimate1}
			\|\kappa \ph(\thh) \|_{L^{\infty}(\Omega)}
			\lesssim& h^{-d/3} \left(\dgseminorm{(\ph-\Ih p)(\thh)} +h^{\frac{1-\ell}{\ell}} \Ltwonorm{(\ph - \Ih p)(\thh)}{\Gamma}\right)+r \\
						\lesssim& {h^{-d/3-\frac{\ell-1}{\ell}}} \left( \dgseminorm{(\ph-\Ih p)(\thh)} +  \sqrt{T} \|(\ph-\Ih p)_t\|_{L^2(0, \thh; \LtwoG)}\right) +r.
	\end{eq}
Using the error estimate from Proposition \ref{prop: nonlin error bound} then yields 
			\begin{eq}\label{eq: wellposedness estimate2}
			\|\kappa \ph(\thh) \|_{L^{\infty}(\Omega)} 
			\lesssim&  (1+\sqrt{T}) {h^{-d/3-\frac{\ell-1}{\ell}+\qp}}	+ r.
	\end{eq}
By assumption, $q \geq 1$ for $d \in \{1,2\}$ and $q \geq 2$ for $d=3$. We thus have $q+1>d/2$ and, since $\ell= \max\{\frac{3d}{d+3}, 1\}$,
	\begin{eq} \label{bound q}
	q > \frac{d}{3}+ \frac{\ell-1}{\ell} = \begin{cases}
		1/3, \quad &d=1,\\
		2/3+\frac16=\frac56, \quad &d=2, \\
		1+\frac13=\frac43, \quad &d=3.
	\end{cases}
	\end{eq}
	 Therefore,  for small enough $\barh$, we obtain
			\begin{eq}\label{eq: wellposedness estimate3}
		\|\kappa \ph(\thh) \|_{L^{\infty}(\Omega)} 
		< r_{0} + r < 1
\end{eq}
and we conclude that $(\ph(\thh),\ph_{t}(\thh)) \in \openset$. Furthermore,
 \[
 \tnorm{\ph(t) - \Ih p(t)}\lesssim h^{\qp} < \Czero {h^{d/3+\eps}} \quad \text{for  }\ t \in [0, \thh]
 \]
because $d/3+\eps<q$ for $q \geq 1$ if $d \in \{1,2\}$ and $q \geq 2$ if $d=3$. \\
\indent Therefore, if $\thh <T$, we can apply the local version of the Picard--Lindelöf theorem again at $t=\thh$ to conclude that the solution exists beyond $\thh$. Thus we must have $\thh = T$ and the error bound in Proposition \ref{prop: nonlin error bound} holds on $[0,T]$.\\
\indent Since $\ph - p =  \ehp + \Ih p - p$, we can use the approximation properties of the interpolant to conclude that the same error estimate that holds for $\ehp=\ph - \Ih p$ also holds for $ \ph - p$. This concludes the proof of Theorem~\ref{thm: wellposedness nonlin}.
\end{proof}\vspace*{-9mm}
\subsection{Empirical order of convergence for the acoustic pressure}\label{sec: pressure numerics}
Next, we numerically verify the convergence rate indicated by Theorem~\ref{thm: wellposedness nonlin}.
We set $\barOmega = [0,1] \times [0,2]$  and  choose the acoustic data $(\fp, \gA, p_0, p_1)$ so that the exact solution is given by
\begin{equation} \label{exact pressure}
	p = \cos(t)\sin(\pi x)\sin(\frac{\pi}{2} y),
\end{equation}
with the acoustic parameters $\alpha=c$, $\kappa =0.1$, $c=1$, $\beta=0.1$, and the final time $T=0.5$.  The time discretization is performed using the Newmark method with parameters $(0.25, 0.5)$ and is realized through a predictor-corrector approach, with a fixed-point iteration with prescribed tolerance to tackle nonlinearities, following~\cite[Ch.~5]{kaltenbacher}.  The simulations are performed using FEniCSx~\cite{fenicsx}.\footnote{The program code for the simulations in this work is available as an ancillary file from the arXiv page~\cite{dewit2026}.} \\
\indent In numerical experiments, we choose the time step $\Delta t = \mathcal{O}(h^{\qp+1})$ to ensure that temporal discretization errors do not pollute the spatial convergence rates. Figure~\ref{fig:westervelt abcs error} shows the plots for different discrete errors using $\qp \in \{1,2\}$. We observe that the first error achieves the order expected by our theory, whereas the order of the errors measured in the $L^\infty(0,T; \Ltwo)$ norm is one order higher, as is known to hold for dG methods; see, e.g.,~\cite[Corollary 4.26]{riviere2008discontinuous}.
\begin{figure}[h]
	\centering
	\begin{subfigure}[b]{0.49\textwidth}
		\centering
	\resizebox{\textwidth}{!}{
\begin{tikzpicture}

\definecolor{darkgray176}{RGB}{176,176,176}
\definecolor{darkorange25512714}{RGB}{255,127,14}
\definecolor{lightgray204}{RGB}{204,204,204}
\definecolor{steelblue31119180}{RGB}{31,119,180}

\begin{axis}[
legend cell align={left},
legend style={
  fill opacity=0.8,
  draw opacity=1,
  text opacity=1,
  at={(0.67,0.4)},
  anchor=south east,
  draw=lightgray204
},
log basis x={10},
log basis y={10},
minor xtick={0.002,0.003,0.004,0.005,0.006,0.007,0.008,0.009,0.02,0.03,0.04,0.05,0.06,0.07,0.08,0.09,0.2,0.3,0.4,0.5,0.6,0.7,0.8,0.9,2,3,4,5,6,7,8,9,20,30,40,50,60,70,80,90},
%
tick align=outside,
tick pos=left,
title={Polynomial degree $q = 1$},
x grid style={darkgray176},
xlabel={\(\displaystyle h\)},
xmin=0.0675441905923626, xmax=0.185064028310577,
xmode=log,
xtick style={color=black},
xtick={0.001,0.01,0.1,1,10},
xticklabels={
  \(\displaystyle {10^{-3}}\),
  \(\displaystyle {10^{-2}}\),
  \(\displaystyle {10^{-1}}\),
  \(\displaystyle {10^{0}}\),
  \(\displaystyle {10^{1}}\)
},
y grid style={darkgray176},
ylabel={discrete error -- pressure},
ymin=0.00405838385940591, ymax=0.168054674153944,
ymode=log,
ytick style={color=black},
ytick={0.0001,0.001,0.01,0.1,1,10},
yticklabels={
  \(\displaystyle {10^{-4}}\),
  \(\displaystyle {10^{-3}}\),
  \(\displaystyle {10^{-2}}\),
  \(\displaystyle {10^{-1}}\),
  \(\displaystyle {10^{0}}\),
  \(\displaystyle {10^{1}}\)
}
]
\addplot [semithick, steelblue31119180, mark=*, mark size=3, mark options={solid}]
table {%
0.176776695296637 0.1418881628307
0.117851130197758 0.0903614996228256
0.0883883476483184 0.0661848614004209
0.0707106781186548 0.0522107031228204
};
\addlegendentry{$\max_{t \in [0,T]} |e^{h,p}(t)|_{\dGnorm}$}
\addplot [semithick, darkorange25512714, mark=*, mark size=3, mark options={solid}]
table {%
0.176776695296637 0.0280242575312351
0.117851130197758 0.0131226623924627
0.0883883476483184 0.00748200657725841
0.0707106781186548 0.0048068166045527
};
\addlegendentry{$\max_{t \in [0,T]} |e^{h,p}_t(t)|_{L^2(\Omega)}$}
\draw (axis cs:0.144337567297406,0.0792616005032307) node[
  scale=0.5,
  text=black,
  rotate=0.0
]{\Large \bf 1.1128};
\draw (axis cs:0.102062072615966,0.0541338713833653) node[
  scale=0.5,
  text=black,
  rotate=0.0
]{\Large \bf 1.0823};
\draw (axis cs:0.0790569415042095,0.0431487969860988) node[
  scale=0.5,
  text=black,
  rotate=0.0
]{\Large \bf 1.0628};
\draw (axis cs:0.144337567297406,0.0134238186253799) node[
  scale=0.5,
  text=black,
  rotate=0.0
]{\Large \bf 1.8713};
\draw (axis cs:0.102062072615966,0.00693614335942233) node[
  scale=0.5,
  text=black,
  rotate=0.0
]{\Large \bf 1.9530};
\draw (axis cs:0.0790569415042095,0.0048979364443688) node[
  scale=0.5,
  text=black,
  rotate=0.0
]{\Large \bf 1.9829};
\end{axis}

\end{tikzpicture}}
	\end{subfigure}
	\begin{subfigure}[b]{0.49\textwidth}
		\centering
		\resizebox{\textwidth}{!}{
\begin{tikzpicture}

\definecolor{darkgray176}{RGB}{176,176,176}
\definecolor{darkorange25512714}{RGB}{255,127,14}
\definecolor{lightgray204}{RGB}{204,204,204}
\definecolor{steelblue31119180}{RGB}{31,119,180}

\begin{axis}[
legend cell align={left},
legend style={
  fill opacity=0.8,
  draw opacity=1,
  text opacity=1,
  at={(0.03,0.97)},
  anchor=north west,
  draw=lightgray204
},
log basis x={10},
log basis y={10},
minor xtick={0.002,0.003,0.004,0.005,0.006,0.007,0.008,0.009,0.02,0.03,0.04,0.05,0.06,0.07,0.08,0.09,0.2,0.3,0.4,0.5,0.6,0.7,0.8,0.9,2,3,4,5,6,7,8,9,20,30,40,50,60,70,80,90},
%
tick align=outside,
tick pos=left,
title={Polynomial degree $q = 2$},
x grid style={darkgray176},
xlabel={\(\displaystyle h\)},
xmin=0.0675441905923626, xmax=0.185064028310577,
xmode=log,
xtick style={color=black},
xtick={0.001,0.01,0.1,1,10},
xticklabels={
  \(\displaystyle {10^{-3}}\),
  \(\displaystyle {10^{-2}}\),
  \(\displaystyle {10^{-1}}\),
  \(\displaystyle {10^{0}}\),
  \(\displaystyle {10^{1}}\)
},
y grid style={darkgray176},
ylabel={discrete error -- pressure},
ymin=8.13294372445965e-05, ymax=0.00726114562033014,
ymode=log,
ytick style={color=black},
ytick={1e-06,1e-05,0.0001,0.001,0.01,0.1},
yticklabels={
  \(\displaystyle {10^{-6}}\),
  \(\displaystyle {10^{-5}}\),
  \(\displaystyle {10^{-4}}\),
  \(\displaystyle {10^{-3}}\),
  \(\displaystyle {10^{-2}}\),
  \(\displaystyle {10^{-1}}\)
}
]
\addplot [semithick, steelblue31119180, mark=*, mark size=3, mark options={solid}]
table {%
0.176776695296637 0.0059201724359692
0.117851130197758 0.0024540957628792
0.0883883476483184 0.00132730380567255
0.0707106781186548 0.000828244127145295
};
\addlegendentry{$\max_{t \in [0,T]} |e^{h,p}(t)|_{\dGnorm}$}
\addplot [semithick, darkorange25512714, mark=*, mark size=3, mark options={solid}]
table {%
0.176776695296637 0.00156151287306523
0.117851130197758 0.000459907458408615
0.0883883476483184 0.000194667640630837
0.0707106781186548 9.97512983683621e-05
};
\addlegendentry{$\max_{t \in [0,T]} |e^{h,p}_t(t)|_{L^2(\Omega)}$}
\draw (axis cs:0.144337567297406,0.00266815448286018) node[
  scale=0.5,
  text=black,
  rotate=0.0
]{\Large \bf 2.1718};
\draw (axis cs:0.102062072615966,0.00126336535345943) node[
  scale=0.5,
  text=black,
  rotate=0.0
]{\Large \bf 2.1364};
\draw (axis cs:0.0790569415042095,0.000733943100773543) node[
  scale=0.5,
  text=black,
  rotate=0.0
]{\Large \bf 2.1134};
\draw (axis cs:0.144337567297406,0.000593206704441751) node[
  scale=0.5,
  text=black,
  rotate=0.0
]{\Large \bf 3.0148};
\draw (axis cs:0.102062072615966,0.000209449895965827) node[
  scale=0.5,
  text=black,
  rotate=0.0
]{\Large \bf 2.9885};
\draw (axis cs:0.0790569415042095,9.75448176613374e-05) node[
  scale=0.5,
  text=black,
  rotate=0.0
]{\Large \bf 2.9963};
\end{axis}

\end{tikzpicture}}
	\end{subfigure}
	\caption{Discrete errors for the pressure}
	\label{fig:westervelt abcs error}
\end{figure}
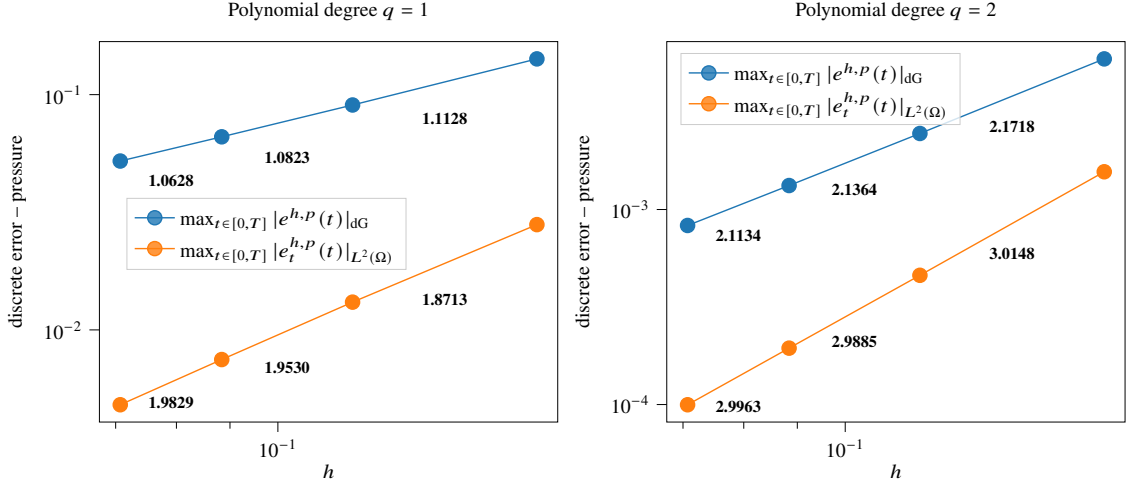

\newpage
\section{Discontinuous Galerkin analysis of the wave--convection-diffusion system}\label{sec: improved model}
Having analyzed the pressure subproblem, we now use these results to establish the error in the approximate concentration. Theorem~\ref{thm: wellposedness nonlin} guarantees the existence of a unique $\ph \in C^2([0,T]; \Vhqp)$ that solves \eqref{semidiscrete acoustic subproblem}, and furthermore (see \eqref{eq: wellposedness estimate2})
\begin{eq} \label{est ph conc new}
	\| \ph \|_{\Linf}  \lesssim&\, \| p \|_{\Linf} + \barh^{q-d/3-\eps},
\end{eq}
where the estimate holds for all $t \in [0,T]$. Thus, given the assumed form of $D=D(p)$ and provided $\|p\|_{\COmegaT}$ and $\barh$ are sufficiently small, there exist constants $\Dmin$, $\Dmax>0$, such that
\begin{eq}
	0 < \Dmin \leq	D(\ph(\bfx,t)) = D_0(1+D_1 \ph(\bfx,t)) \leq \Dmax, \quad \text{for all  } (\bfx,t) \in \overlineK \times [0,T], K \in \triangles.
\end{eq}
A numerical analysis of the convection-diffusion problem with $D = D(\bfx)$, including sufficient conditions for existence, uniqueness, and optimal convergence, can be found, for example, in~\cite[Chapter 4]{riviere2008discontinuous}.  We now adapt this result to obtain the well-posedness and convergence of the semi-discrete multiphysics problem \eqref{dG semi-discrete wave-adv diff system}. 

\begin{theorem}\label{thm: full model concentration}
	Let the assumptions of Theorem~\ref{thm: wellposedness nonlin} hold with $\qu \geq 1$ if $d \in \{1,2\}$ and $q \geq 2$ if $d=3$.  Let $(p,u) \in \Xp \times \Xu$ be the solution of the exact problem \eqref{exact multiphysics problem},  satisfying the acoustic non-degeneracy condition \eqref{non-degeneracy} and let $\uzeroh= \Ih \uzero$.  Let $\fhu$ and $\ghin$ satisfy the accuracy assumptions \eqref{approx properties source terms}. Then there exist $M>0$ and $\barh>0$, such that if
	\[
	\|p\|_{\COmegaT} \leq M \qquad \text{and} \quad h \leq \barh,
	\]
	then there is a unique solution $(\ph,\uh) \in C^{2}([0,T];\Vhq) \times C^{1}([0,T];\Vhq)$ of \eqref{dG semi-discrete wave-adv diff system}.
	Furthermore, the approximate pressure satisfies the error estimate \eqref{error bound pressure} and the approximate concentration satisfies
	\begin{eq}
		\Ltwonorm{u(t)-\uh(t)}{\Omega}^{2} + \int_{0}^{t} \dgseminorm{u-\uh}^{2} \ds \lesssim h^{2q},
	\end{eq}
for all $t \in [0,T]$, where the hidden constants depend on the exact solution $(p,u)$, the medium parameters, and the final time $T$, but not on the mesh size $h$.
\end{theorem}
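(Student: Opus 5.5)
Since the system \eqref{dG semi-discrete wave-adv diff system} is sequentially coupled, the plan is to first invoke Theorem~\ref{thm: wellposedness nonlin}. This gives the unique $\ph \in C^2([0,T];\Vhq)$ and the pressure estimate \eqref{error bound pressure}.

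\textbf{Step 1: uniform ellipticity of $D(\ph)$.} By \eqref{est ph conc new} we have $\|\ph(t)\|_{\Linf} \lesssim M + \barh^{q-d/3-\eps}$. Choosing $M$ and $\barh$ small gives $|D_1|\,\|\ph\|_{L^\infty}\le 1/2$, hence $D_0/2\le D(\ph)\le 3D_0/2$ uniformly in $t$. Since $\ph(t)$ is piecewise polynomial, $D(\ph(t))\in W^{1,\infty}(\triangles)\cap L^\infty(\Omega)$. We fix $\sigma\eta$ as in Lemma~\ref{lemma: coercivity sip} with these $\Dmin,\Dmax$. Then Lemmas~\ref{lemma: coercivity sip} and~\ref{lemma: boundedness asip} hold uniformly in $t$.

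\textbf{Step 2: existence of $\uh$.} The concentration equation is a \emph{linear} ODE system $M\uh_t + A(t)\uh = b(t)$ with invertible mass matrix. Its coefficient $A(t)$ is continuous in $t$, since $\ph\in C^2$, and $b$ is continuous. Hence a unique global $\uh\in C^1([0,T];\Vhq)$ exists. Uniqueness of the pair $(\ph,\uh)$ follows from uniqueness of each component.

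\textbf{Step 3: error splitting.} Write $u-\uh = -(\eh + \eI)$ with $\eh=\uh-\Ih u$ and $\eI=\Ih u-u$. The exact $u\in C^1(\overline\Omega)\cap H^{q+1}$ satisfies the scheme consistently with coefficient $D(p)$, since its jumps vanish and the boundary terms reproduce the inflow and Neumann conditions. Subtracting gives
\[
(\eh_t,\wh)+\asip(D(\ph);\eh,\wh)+\bupwind(\bfv;\eh,\wh) = -(\eI_t,\wh)-\asip(D(\ph);\eI,\wh)-\bupwind(\bfv;\eI,\wh)+R(\wh),
\]
where $R$ collects the source errors and the coefficient mismatch
\[
\asip(D(p);u,\wh)-\asip(D(\ph);u,\wh).
\]
Because $u$ is continuous, this mismatch reduces to $\int_\Omega D_0D_1(p-\ph)\nabla u\cdot\nablah\wh\,\dx$ minus $\sumFint\int_F \jump{\wh}\avg{D_0D_1(p-\ph)\nabla u}\cdot\nF\,\dG$.

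\textbf{Step 4: energy estimate.} Test with $\wh=\eh$. By Lemma~\ref{lemma: identity bupwind}, the upwind term on the left is nonnegative and controls the weighted jump and boundary seminorms. Coercivity gives $\dgseminorm{\eh}^2$.
\begin{itemize}
\item The interpolation terms are bounded via Remark~\ref{remark}, Lemma~\ref{lemma: bound bupwind} with small $\gamma$, trace inequalities and \eqref{est interpolant dGstar}. Each contributes $h^{2q}$ plus absorbable terms.
\item The source terms are handled by \eqref{approx properties source terms}; the inflow term uses a trace/inverse bound on $\eh$ at the boundary, absorbed by the $\Gin$ upwind term.
\end{itemize}

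\textbf{Main obstacle: the coefficient mismatch.} I would bound it by $\|\nabla u\|_{L^\infty}$ times $\|p-\ph\|_{L^2(\Omega)}\dgseminorm{\eh}$, plus a face term. For the face term, the continuous trace inequality (Lemma~\ref{lem: cont trace ineq}) on the broken $H^1$ function $p-\ph$ gives
\[
\sumFint h_F\|p-\ph\|^2_{L^2(F)}\lesssim \|p-\ph\|_{L^2}^2+h^2\|\nablah(p-\ph)\|_{L^2}^2 .
\]
Note that $u\in C^1$ is needed here so that $\nabla u$ is bounded on faces. Both quantities are $O(h^{2q})$ by \eqref{error bound pressure}, using $\|p-\ph\|_{L^2}\le \|p_0-\ph_0\|+\int_0^t\|p_t-\pht\|$. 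So this term contributes $h^{2q}+\gamma\dgseminorm{\eh}^2$.

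\textbf{Step 5: conclusion.} Integrating in time, using $\eh(0)=0$, and applying Gr\"onwall yields
\[
\|\eh(t)\|^2+\int_0^t\dgseminorm{\eh}^2\lesssim h^{2q}.
\]
The triangle inequality with the interpolation estimates for $\eI$ then gives the claimed bound on $u-\uh$.
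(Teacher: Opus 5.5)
Your proposal is correct and follows essentially the same route as the paper. It invokes Theorem~\ref{thm: wellposedness nonlin} for $\ph$, uses the smallness of $\ph$ to get uniform bounds on $D(\ph)$, splits $u-\uh$ with the interpolant, tests with the discrete error, uses the upwind identity and coercivity, bounds the mismatch $D_0D_1(p-\ph)\nabla u$ via $\|\nabla u\|_{L^\infty}$ and the continuous trace inequality, and closes with Gr\"onwall. Your extra details (the linear-ODE existence argument for $\uh$, the $h^2$-weighted face bound, and obtaining $\|p-\ph\|_{\Ltwo}\lesssim h^q$ by integrating $p_t-\pht$) only make explicit what the paper leaves implicit. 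One remark: the inflow data term needs no trace or inverse bound, only Cauchy--Schwarz and Young against the $\Gin$ upwind term.
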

\begin{proof}
As Theorem~\ref{thm: wellposedness nonlin} establishes the well-posedness and error estimates for the pressure subproblem, the existence and convergence proofs for the convection-diffusion equation follow by adapting the arguments of~\cite[Thm.~4.2]{riviere2008discontinuous}. We focus on showing the convergence of the approximate concentration. The equations stated in the following should be understood to hold for all $\wh \in \Vhqu$ and for all times $t \in [0,T]$. \\
 \indent Subtracting the weak forms for $u$ and $\uh$ and using the linearity of $\asip(D(\cdot); \cdot, \cdot)$ with respect to its arguments yields
	\begin{eq}\label{eq: subtract u uh}
			&\int_{\Omega} (u_{t}-\uh_{t}) \wh \dx  + \asip(D(\ph);u-\uh,\wh)+ \bupwind(\bfv; u-\uh,\wh) \\
			=&\,\begin{multlined}[t] \int_{\Omega} (f_{u}-\fh_{u})\wh \dx
				- \int_{\Gin}(\uin- \uhin) \wh \, \bfv \cdot \bfn \dG- \asip(D(p);u,\wh) + \asip(D(\ph);u,\wh).
				\end{multlined}
	\end{eq}
Since $D(p) - D(\ph)= \DzeroDone (p-\ph)$,	we can rewrite the $\asip$ terms on the right-hand side as follows:
	\begin{eq}
		&-\asip(D(p);u,\wh) + \asip(D(\ph);u,\wh)  \\
		=&- D_{0}D_{1} \sum_{K \in \triangles} \int_{K} (p-\ph) \nabla u \cdot \nabla \wh \dx +D_{0}D_{1} \sum_{F \in \calFhint}  \intF \jump{\wh}\avg{(p-\ph)}\nabla u\cdot \nF   \dG.
	\end{eq}
	Here  we have also used the fact that $\jump{u}=0$ and $\avg{\nabla u} \cdot \nF = \nabla u \cdot \nF$ for $u \in \Xu$. We include this rewriting in (\ref{eq: subtract u uh}), split the error $u-\uh = u - \Ih u + \Ih u - \uh \eqqcolon \eI + \ehu$, and denote $\epressure = p-\ph$ to arrive at 
	\begin{eq}\label{eq: error equation err concentration}
		&\int_{\Omega} (\eIu_{t} + \ehu_{t}) \wh \dx  + \asip(D(\ph);\eIu + \ehu,\wh)+ \bupwind(\bfv; \eIu + \ehu,\wh) \\
		=&\,\begin{multlined}[t] \int_{\Omega} (\fu-\fh_{u})\wh \dx - \int_{\Gin}(\uin- \uhin) \wh \, \bfv \cdot \bfn \dG
		-\DzeroDone \sum_{K \in \triangles} \int_{K} \epressure \nabla u \cdot \nabla \wh \dx\\ +\DzeroDone \sum_{F \in \calFhint}  \int_{F} \jump{\wh}\avg{\epressure}\nabla u\cdot \nF   \dG.
		\end{multlined}
	\end{eq}
Testing \eqref{eq: error equation err concentration} with $\wh = \ehu$ yields
	\begin{eq} \label{eq: error equation err concentration tested}
		&\frac{1}{2}\timeder \int_{\Omega}  (\ehu)^{2}\dx  + \asip(D(\ph);\ehu,\ehu)+ \bupwind(\bfv; \ehu,\ehu)  \\
		=& \begin{multlined}[t] \int_{\Omega} (f_{u}-\fh_{u})\ehu \dx - \int_{\Gin}(\uin- \uhin) \ehu \, \bfv \cdot \bfn \dG
		-  \Dzero \Done \sum_{K \in \triangles}\intK \epressure \nabla u \cdot \nabla \ehu \dx \\+ \Dzero \Done \sum_{F \in \calFhint}  \intF \jump{\ehu}\avg{\epressure}\nabla u\cdot \nF   \dG 
		- \int_{\Omega} \eI_{t} \ehu \dx - \asip(D(\ph);\eIu,\ehu)\\-\bupwind(\bfv; \eIu ,\ehu).
		\end{multlined}
	\end{eq}
Thanks to Lemma~\ref{lemma: identity bupwind}, we have the identity
\begin{eq} \label{bupwind id}
	\bupwind(\bfv;\ehu,\ehu)  =& \begin{multlined}[t] 	\frac12	\sumFint \intF| \bfvnF| \jump{\ehu}^2 \dG+ \frac12 \sum_{F \subset \Gin} \intF |\bfvnF| (\ehu)^2 \dG \\ +\frac12 \sum_{F \subset \Gout} \intF |\bfvnF| (\ehu)^2 \dG.
	\end{multlined}
\end{eq}
The $\bupwind$ term on the right-hand side of \eqref{eq: error equation err concentration tested} can be bounded using Lemma~\ref{lemma: bound bupwind}:
	\begin{eq} \label{aa}
		|-\bupwind(\bfv; \eIu,\ehu)| \lesssim&\,
		 \begin{multlined}[t]
			\gamma \left\{ \dgseminorm{\ehu}^2 +  \sumFint  \| |\bfvnF|^{1/2} \jump{\ehu}\|^2_{\LtwoF}  +  \sum_{F \subset \Gout}  \| |\bfvnF|^{1/2} \jump{\ehu}\|^2_{\LtwoF}\right\} \\
			+ \|\eIu\|^2_{\Ltwo}+ \sumFint \|(\eIu)_{\textup{up}}\|^2_{L^2(F)} 
			+\sum_{F\subset \Gout}  \|\eIu\|^2_{\LtwoF}
		\end{multlined}
	\end{eq}
	for any $\gamma>0$. The $e^p$ terms on the right-hand side of \eqref{eq: error equation err concentration tested} can be estimated as follows:
	\begin{eq}
&\left|	-  \Dzero \Done \sum_{K \in \triangles}\intK \epressure \nabla u \cdot \nabla \ehu \dx + \Dzero \Done \sum_{F \in \calFhint}  \intF \jump{\ehu}\avg{\epressure}\nabla u\cdot \nF   \dG \right|\\
		\lesssim&\,  \begin{multlined}[t] \sum_{K \in \triangles}  \Ltwonorm{e^{p}}{K} \| \nabla u\|_{L^{\infty}(K)} \Ltwonorm{\nabla \ehu}{K} 
			+  \sum_{F \in \calFhint} \hF \Ltwonorm{\avg{e^{p}}}{F}\|\nabla u\cdot \nF\|_{L^{\infty}(F)} \frac{1}{\hF}\Ltwonorm{\jump{\ehu}}{F}.
		\end{multlined}
	\end{eq}
By the regularity assumption on $u$, the quantities $\|\nabla u\|_{L^{\infty}(K)}$ and $\|\nabla u \cdot \nF\|_{L^{\infty}(F)}$ are uniformly bounded over all $K \in \triangles$ and $F \in \calFhint$. We further use the continuous trace inequality in Lemma~\ref{lem: cont trace ineq} to bound the pressure error on the faces: 
	\begin{equation}
		\sumFint \hF \Ltwonorm{\avg{\epressure}}{F}^{2} \lesssim
		{ \|\nablah e^{p}\|_{L^{2}(\Omega)}^{2}+}
		\Ltwonorm{\epressure}{\Omega}^{2}.
	\end{equation}
	Altogether, we have
		\begin{eq}
		&	\left|	-  \Dzero \Done \sum_{K \in \triangles}\intK \epressure \nabla u \cdot \nabla \ehu \dx + \Dzero \Done \sum_{F \in \calFhint}  \intF \jump{\ehu}\avg{\epressure}\nabla u\cdot \nF   \dG \right|\\
		\lesssim&\,  \gamma \dgseminorm{\ehu}^2+\|\epressure\|^2_{\Ltwo}+\dgseminorm{e^{p}}^{2}. 
	\end{eq}
	Furthermore, we have
	\begin{eq} \label{boundary data bound}
	\left| - \int_{\Gin}(\gin- \ghin) \ehu \, \bfv \cdot \bfn \dG \right| \lesssim \|\gin- \ghin\|^2_{\LtwoGin}+ \gamma \intGin |\bfv \cdot \bfn| (\ehu)^2 \dG.
	\end{eq}
Estimating the $\fu-\fhu$ term by an analogous application of Young's inequality and using coercivity and boundedness of $\asip(D(\cdot); \cdot, \cdot)$ leads to the following estimate:
	\begin{eq}
			&\begin{multlined}[t] \timeder \int_{\Omega}  (\ehu)^{2}\dx +\dgseminorm{\ehu}^{2}\\
				+	\frac12	\sumFint \intF| \bfvnF| \jump{\ehu}^2 \dG+ \frac12 \sum_{F \subset \Gin} \intF |\bfvnF| (\ehu)^2 \dG +\frac12 \sum_{F \subset \Gout} \intF |\bfvnF| (\ehu)^2 \dG
				\end{multlined}\\
		 \lesssim&\, \begin{multlined}[t]
\Ltwonorm{f_{u}-\fh_{u}}{\Omega}^{2} +\|\gin- \ghin\|^2_{\LtwoGin}+  \Ltwonorm{\ehu}{\Omega}^{2}+\|\epressure\|^2_{\Ltwo}+\dgseminorm{e^{p}}^{2}
	\\
		+	\gamma \left\{ \dgseminorm{\ehu}^2 +  \sumFint  \| |\bfvnF|^{1/2} \jump{\ehu}\|^2_{\LtwoF}  +  \sum_{F \subset \Gout}  \| |\bfvnF|^{1/2} \jump{\ehu}\|^2_{\LtwoF}+\intGin |\bfv \cdot \bfn| (\ehu)^2 \dG\right\} \\+ \|\eIu\|^2_{\Ltwo}	+ \Ltwonorm{\eI_{t}}{\Omega}^{2} + \sumFint \|(\eIu)_{\textup{up}}\|^2_{L^2(F)} 
		+\sum_{F\subset \Gout}  \|\eIu\|^2_{\LtwoF}
		\end{multlined}
	\end{eq}
	for any $\gamma >0$.  For a sufficiently small $\gamma>0$, the $\gamma$ terms on the right-hand side can be absorbed by the left-hand side. Then integrating in time, employing the bound \eqref{error bound pressure} on $\epressure$ from Theorem~\ref{thm: wellposedness nonlin} as well as the approximation properties of the interpolant, $\fhu$, and $\ghin$ yields
	\begin{eq}
	\Ltwonorm{\ehu(t)}{\Omega}^{2} + \int_{0}^{t}\dgseminorm{\ehu}^{2} \ds
 \lesssim&\,	\begin{multlined}[t] h^{2q}  + \Ltwonorm{\eh}{0,t;L^{2}(\Omega)}^{2} + \Ltwonorm{\epressure}{0,T;L^{2}(\Omega)}^{2} 
		+\int_{0}^{T}\dgseminorm{e^{p}}^{2}\ds 
		\end{multlined}
		\\
		\lesssim&\, h^{2q} + \Ltwonorm{\eh}{0,t;L^{2}(\Omega)}^{2}.
	\end{eq}
	The result then follows by applying \Gronwall's inequality. 
\end{proof}
\subsection{Bounds on the approximate concentration}
Since $u$ models a concentration, we expect that $\uh \in [0,1]$ in $\overline{\Omega} \times [0,T]$. If $\uh < 0$, it is said that an \textit{undershoot} occurs, and if $\uh>1$, we have \textit{overshoot}. These phenomena depend on both spatial and time discretization. 
In the semidiscrete solution, the overshoot can be bounded in terms of the mesh size $h$ and the exact solution. We have
\begin{eq}
	\| \uh \|_{L^{\infty}(\Omega)} \leq&\, \| u \|_{L^{\infty}(\Omega)} + \| \Ih u -\uh \|_{L^{\infty}(\Omega)} + \| \Ih u -{u} \|_{L^{\infty}(\Omega)}\\
	 \leq&\, \| u \|_{L^{\infty}(\Omega)} + C \left( h^{q-d/2}+ h^{q+1-d/2} \right)
\end{eq}
for all $t \in [0,T]$, where we have used Theorem~\ref{thm: full model concentration} in the last step. For $q \geq 2$, we can guarantee that $\uh(t) < 1$ provided $h$ is small relative to $1- \|u(t)\|_{\Linf}$. \\
\indent The undershoot is a more difficult phenomenon. In general, we cannot guarantee positivity for the considered dG method. Convection-dominated settings are, in particular, prone to undershoot.  A popular way to reduce undershoot is by using \textit{slope limiters}; see~\cite[Section 4.3.2]{riviere2008discontinuous}. In specific cases, one can design a positivity-preserving method; we refer, e.g., to~\cite{pospresandvp} in which the compressible Euler equations are discretized using a dG method in space and a Lax--Wendroff method in time. For such methods, slope limiting is not needed.
%

\subsection{Numerical results for the pressure-dependent concentration}

To verify the theoretical result in Theorem~\ref{thm: full model concentration}, we next perform an empirical order of convergence study following the same academic setting as in Section~\ref{sec: pressure numerics} and then use a more realistic setting to illustrate the effects of ultrasound-enhanced drug transport numerically. 

\subsubsection{Empirical order of convergence for the concentration}\label{sec: error numerics concentration}

We set $\barOmega = [0,1] \times [0,2]$ again as in Section~\ref{sec: pressure numerics}. We choose the source terms and boundary and initial data such that the exact solution for the concentration is
\begin{eq}
	u(x,y,t) = e^{-t}\cos(\pi y),
\end{eq}
and the exact pressure is again given by \eqref{exact pressure}. We set $D_{0} = D_{1} =1$ in the diffusion coefficient, so that $D(p)= 1+p$.
We take $\bfv=(0,1)^{\textup{T}}$, so that $\Gin = \{(x,y) \in \partial \Omega: \, y=0\}$. The Newmark scheme with a very small time step  is again used to solve the Westervelt equation, and we employ the backward Euler method for the time stepping for concentration. At each time step, the pressure is first computed and then used to advance the concentration. The resulting empirical orders of convergence are reported in Figure~\ref{fig: coupled}.

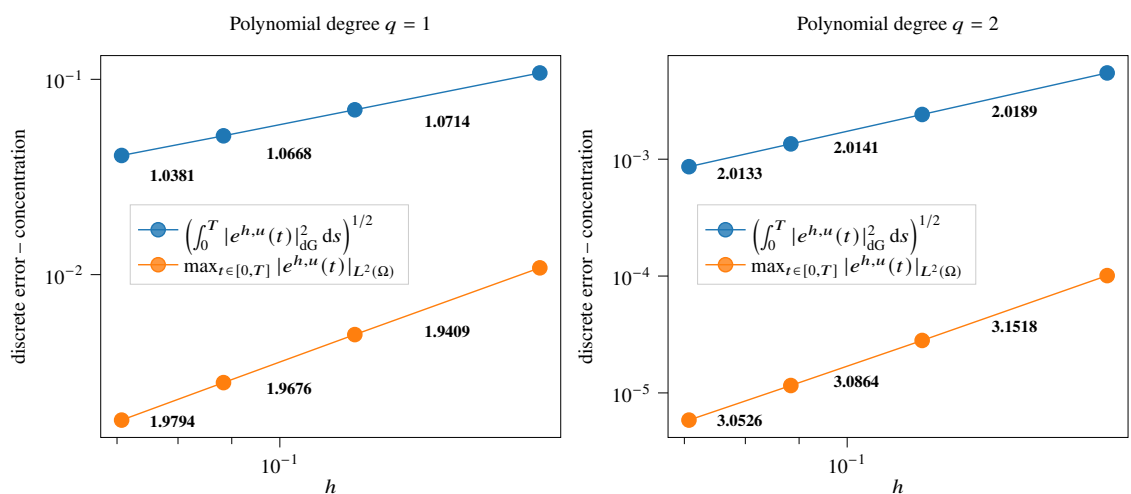
\begin{figure}[h]
	\centering
	\begin{subfigure}[b]{0.49\textwidth}
		\centering
		\resizebox{\textwidth}{!}{
\begin{tikzpicture}

\definecolor{darkgray176}{RGB}{176,176,176}
\definecolor{darkorange25512714}{RGB}{255,127,14}
\definecolor{lightgray204}{RGB}{204,204,204}
\definecolor{steelblue31119180}{RGB}{31,119,180}

\begin{axis}[
legend cell align={left},
legend style={
  fill opacity=0.8,
  draw opacity=1,
  text opacity=1,
  at={(0.67,0.4)},
  anchor=south east,
  draw=lightgray204
},
log basis x={10},
log basis y={10},
minor xtick={0.002,0.003,0.004,0.005,0.006,0.007,0.008,0.009,0.02,0.03,0.04,0.05,0.06,0.07,0.08,0.09,0.2,0.3,0.4,0.5,0.6,0.7,0.8,0.9,2,3,4,5,6,7,8,9,20,30,40,50,60,70,80,90},
%
tick align=outside,
tick pos=left,
title={Polynomial degree $q = 1$},
x grid style={darkgray176},
xlabel={\(\displaystyle h\)},
xmin=0.0675441905923626, xmax=0.185064028310577,
xmode=log,
xtick style={color=black},
xtick={0.001,0.01,0.1,1,10},
xticklabels={
  \(\displaystyle {10^{-3}}\),
  \(\displaystyle {10^{-2}}\),
  \(\displaystyle {10^{-1}}\),
  \(\displaystyle {10^{0}}\),
  \(\displaystyle {10^{1}}\)
},
y grid style={darkgray176},
ylabel={discrete error -- concentration},
ymin=0.00146706608060219, ymax=0.132290271392981,
ymode=log,
ytick style={color=black},
ytick={0.0001,0.001,0.01,0.1,1,10},
yticklabels={
  \(\displaystyle {10^{-4}}\),
  \(\displaystyle {10^{-3}}\),
  \(\displaystyle {10^{-2}}\),
  \(\displaystyle {10^{-1}}\),
  \(\displaystyle {10^{0}}\),
  \(\displaystyle {10^{1}}\)
}
]
\addplot [semithick, steelblue31119180, mark=*, mark size=3, mark options={solid}]
table {%
0.176776695296637 0.107810414325238
0.117851130197758 0.0698238438336352
0.0883883476483184 0.0513718519737647
0.0707106781186548 0.040749134457278
};
\addlegendentry{$\left(\int_0^T |e^{h,u}(t)|^2_{\dGnorm} \ds \right)^{1/2}$}
\addplot [semithick, darkorange25512714, mark=*, mark size=3, mark options={solid}]
table {%
0.176776695296637 0.0108328445511759
0.117851130197758 0.00493145558758793
0.0883883476483184 0.00279991186388867
0.0707106781186548 0.00180018387990618
};
\addlegendentry{$\max_{t \in [0,T]} |e^{h,u}(t)|_{L^2(\Omega)}$}
\draw (axis cs:0.144337567297406,0.060733774717266) node[
  scale=0.5,
  text=black,
  rotate=0.0
]{\Large \bf 1.0714};
\draw (axis cs:0.102062072615966,0.0419239821955617) node[
  scale=0.5,
  text=black,
  rotate=0.0
]{\Large \bf 1.0668};
\draw (axis cs:0.0790569415042095,0.0320272644268157) node[
  scale=0.5,
  text=black,
  rotate=0.0
]{\Large \bf 1.0381};
\draw (axis cs:0.144337567297406,0.00511631009398085) node[
  scale=0.5,
  text=black,
  rotate=0.0
]{\Large \bf 1.9409};
\draw (axis cs:0.102062072615966,0.00260110439869385) node[
  scale=0.5,
  text=black,
  rotate=0.0
]{\Large \bf 1.9676};
\draw (axis cs:0.0790569415042095,0.00177155163429011) node[
  scale=0.5,
  text=black,
  rotate=0.0
]{\Large \bf 1.9794};
\end{axis}

\end{tikzpicture}}
	\end{subfigure}
	\begin{subfigure}[b]{0.49\textwidth}
		\centering
		\resizebox{\textwidth}{!}{
\begin{tikzpicture}

\definecolor{darkgray176}{RGB}{176,176,176}
\definecolor{darkorange25512714}{RGB}{255,127,14}
\definecolor{lightgray204}{RGB}{204,204,204}
\definecolor{steelblue31119180}{RGB}{31,119,180}

\begin{axis}[
legend cell align={left},
legend style={
  fill opacity=0.8,
  draw opacity=1,
  text opacity=1,
  at={(0.67,0.4)},
  anchor=south east,
  draw=lightgray204
},
log basis x={10},
log basis y={10},
minor xtick={0.002,0.003,0.004,0.005,0.006,0.007,0.008,0.009,0.02,0.03,0.04,0.05,0.06,0.07,0.08,0.09,0.2,0.3,0.4,0.5,0.6,0.7,0.8,0.9,2,3,4,5,6,7,8,9,20,30,40,50,60,70,80,90},
tick align=outside,
tick pos=left,
title={Polynomial degree $q = 2$},
x grid style={darkgray176},
xlabel={\(\displaystyle h\)},
xmin=0.0675441905923626, xmax=0.185064028310577,
xmode=log,
xtick style={color=black},
xtick={0.001,0.01,0.1,1,10},
xticklabels={
  \(\displaystyle {10^{-3}}\),
  \(\displaystyle {10^{-2}}\),
  \(\displaystyle {10^{-1}}\),
  \(\displaystyle {10^{0}}\),
  \(\displaystyle {10^{1}}\)
},
y grid style={darkgray176},
ylabel={discrete error -- concentration},
ymin=4.15793172325948e-06, ymax=0.00770399387885243,
ymode=log,
ytick style={color=black},
ytick={1e-07,1e-06,1e-05,0.0001,0.001,0.01,0.1},
yticklabels={
  \(\displaystyle {10^{-7}}\),
  \(\displaystyle {10^{-6}}\),
  \(\displaystyle {10^{-5}}\),
  \(\displaystyle {10^{-4}}\),
  \(\displaystyle {10^{-3}}\),
  \(\displaystyle {10^{-2}}\),
  \(\displaystyle {10^{-1}}\)
}
]
\addplot [semithick, steelblue31119180, mark=*, mark size=3, mark options={solid}]
table {%
0.176776695296637 0.00547239971975762
0.117851130197758 0.00241362302579455
0.0883883476483184 0.00135216284457566
0.0707106781186548 0.000862815105470303
};
\addlegendentry{$\left(\int_0^T |e^{h,u}(t)|^2_{\dGnorm} \ds \right)^{1/2}$}
\addplot [semithick, darkorange25512714, mark=*, mark size=3, mark options={solid}]
table {%
0.176776695296637 0.000100892901400043
0.117851130197758 2.81091455672475e-05
0.0883883476483184 1.15674745828623e-05
0.0707106781186548 5.85349795063876e-06
};
\addlegendentry{$\max_{t \in [0,T]} |e^{h,u}(t)|_{L^2(\Omega)}$}
\draw (axis cs:0.144337567297406,0.00254402670687234) node[
  scale=0.5,
  text=black,
  rotate=0.0
]{\Large \bf 2.0189};
\draw (axis cs:0.102062072615966,0.00126458276691679) node[
  scale=0.5,
  text=black,
  rotate=0.0
]{\Large \bf 2.0141};
\draw (axis cs:0.0790569415042095,0.000756086369672296) node[
  scale=0.5,
  text=black,
  rotate=0.0
]{\Large \bf 2.0133};
\draw (axis cs:0.144337567297406,3.72779625724947e-05) node[
  scale=0.5,
  text=black,
  rotate=0.0
]{\Large \bf 3.1518};
\draw (axis cs:0.102062072615966,1.26223767642471e-05) node[
  scale=0.5,
  text=black,
  rotate=0.0
]{\Large \bf 3.0864};
\draw (axis cs:0.0790569415042095,5.76003407062639e-06) node[
  scale=0.5,
  text=black,
  rotate=0.0
]{\Large \bf 3.0526};
\end{axis}

\end{tikzpicture}}
	\end{subfigure}
	\caption{Discrete errors for the concentration}
	\label{fig: coupled}
\end{figure}

We see that the method indeed converges as expected in the dG-based semi-norms. The error measured in the $L^\infty(0,T; \Ltwo)$ norm  converges at the higher rate $h^{q+1}$, exceeding the theoretical predictions of Theorem~\ref{thm: full model concentration}, as is typical for dG methods.

\subsubsection{A further numerical study}\label{sec: simulations concentration}

Finally, we consider a setting with physically realistic acoustic parameters. We take $\barOmega = [0, 0.01\si{\meter}]^2$ and set $T =  5 \cdot 10^{-6}\si{\second}$, $\Delta t = 5 \cdot 10^{-8}\si{\second}$, and take $N_{x}=N_{y}= 40$ elements in both directions.
We again use the Newmark scheme for acoustic time stepping with parameters $(0.25, 0.5)$. The acoustic medium parameters are taken to be $c = 1500\si[per-mode=symbol]{\meter \per \second}$, $\beta = 10^{-6} \si[per-mode=symbol]{\meter \squared \per\second}$, and $\kappa = 1 \si{\per \pascal}$. We set $\alpha = c$ and $\gA=0$, corresponding to zero-order Engquist--Majda absorbing conditions. The acoustic source term is given by
\begin{eq}
	\fp(x,y, t) = {3 \cdot}10^{11} e^{-\frac{1}{\sigma_0^{2}}((x- 0.005)^{2}+(y- 0.005)^{2})}\sin(2\pi \omega t),
\end{eq}
with $\sigma_0 = 2 \cdot 10^{-4}\si{\meter}$ and frequency $\omega = 400\, \si{k\Hz}$. \\
\indent For the concentration, the drug enters at the bottom side $\Gin = \{(x,y) \in \partial \Omega: y=0\}$ through a pulse $\gin = 1 \si[per-mode=symbol]{\kilo \gram \per \meter \cubed}$. 
The convective velocity is therefore directed in the upward $y$-direction: $\bfv = (0,10^{-3}\si[per-mode=symbol]{\meter \per \second})^{\textup{T}}$. \\
\indent  
The drug concentration evolves on a much slower time scale than the acoustic pressure. In order to capture the effect of the ultrasound, we set
\[
D(p)=D_{0}(1+D_{1}|p|),
\] 
where the absolute value ensures that both positive and negative pressure values contribute to increasing the diffusion coefficient, and, following~\cite{careaga2025westervelt}, take a relatively large value $\Dzero = 5 \si[per-mode=symbol]{\meter \squared \per \second}$.  This diffusion-dominated regime also helps suppress undershoot artifacts (see~\cite[Chapter 4]{riviere2008discontinuous}). We further set $\Done = 500 \si{\per  \pascal}$.  Figures~\ref{fig: pressure waves} and~\ref{fig: concentration spread} show the propagation of acoustic waves and the spread of the drug concentration, respectively, at four different times. 
\begin{figure}[h]
	\centering
	\includegraphics[width=0.7\linewidth]{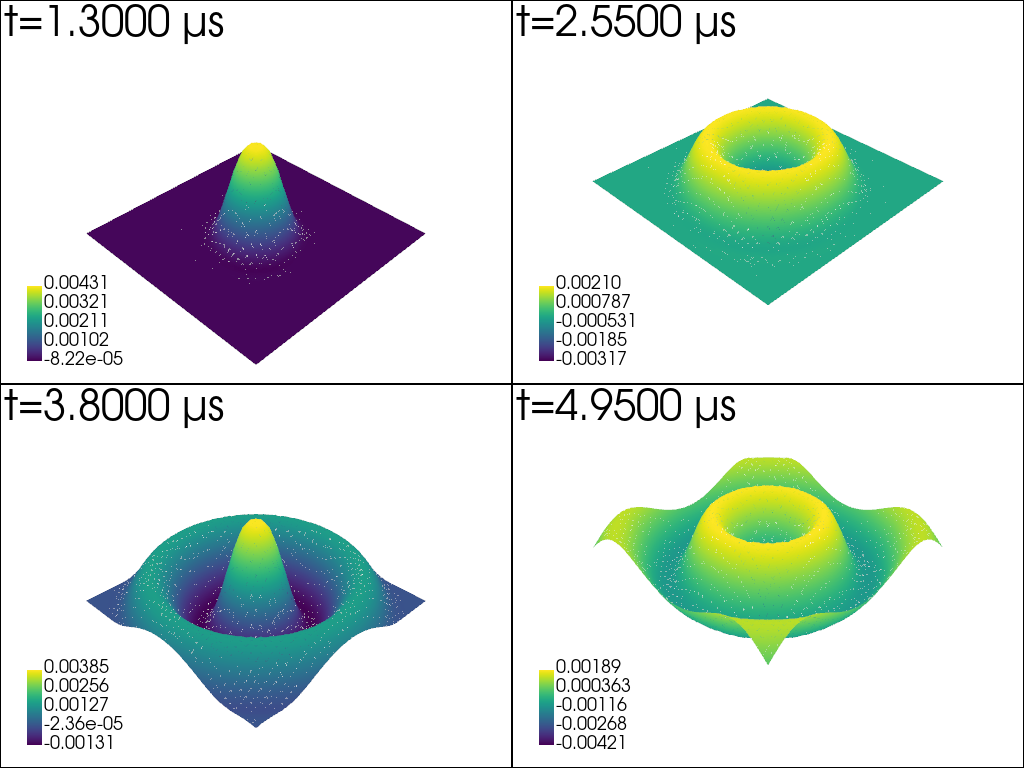}
	\caption{Pressure waves affecting drug spread ($[\si[per-mode=symbol]{\pascal}]$).}
	\label{fig: pressure waves}
\end{figure}
\begin{figure}[h]
	\centering
	\includegraphics[width=0.7\linewidth]{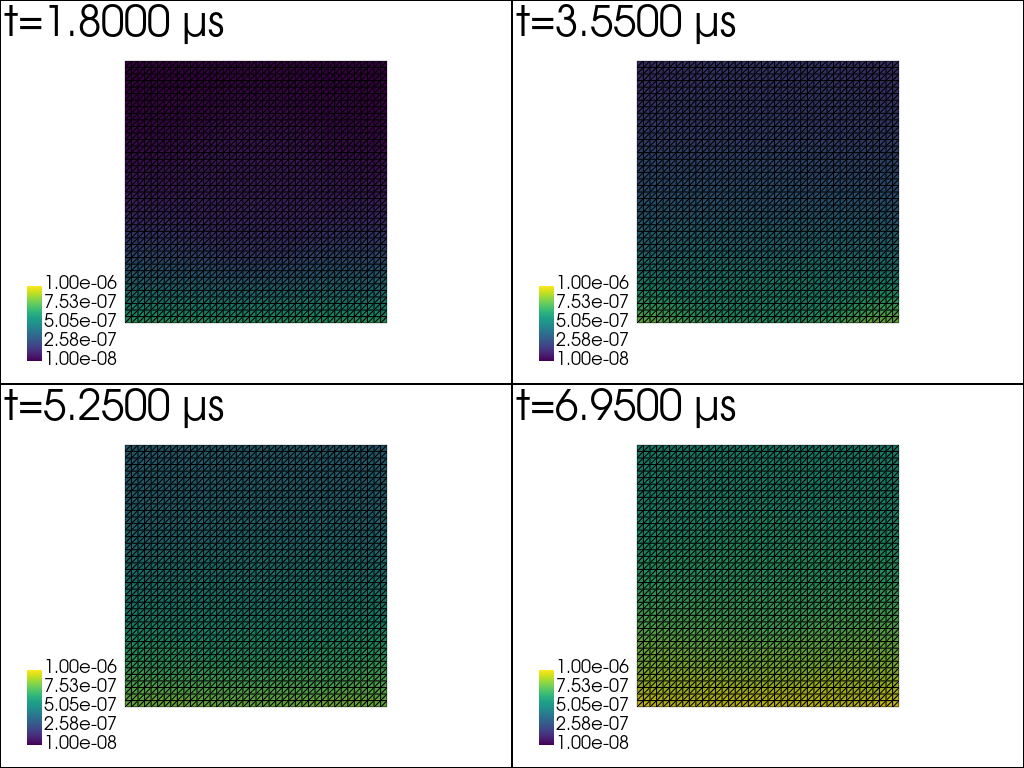}
	\caption{Ultrasound-enhanced drug concentration at different times ($[\si[per-mode=symbol]{\kilo \gram \per \meter \cubed}]$).}
	\label{fig: concentration spread}
\end{figure}
\begin{figure}[h]
	\captionsetup{width=.65\linewidth}
	\centering
\begin{tikzpicture}

\definecolor{darkgray176}{RGB}{176,176,176}
\definecolor{steelblue31119180}{RGB}{31,119,180}

\begin{axis}[
	tick align=outside,
	width=.55\textwidth,
	tick pos=left,
	title={Relative change in concentration at $y=0.01$},
	xlabel={$t$},
	ylabel={$\delta_{\Gamma_4}$},
	x grid style={darkgray176},
	xmin=-2.5e-07, xmax=5.25e-06,
	xtick style={color=black},
	xtick={-1e-06,0,1e-06,2e-06,3e-06,4e-06,5e-06,6e-06},
	xticklabels={\ensuremath{-}1,0,1,2,3,4,5,6},
	y grid style={darkgray176},
	ymin=-0.0175976023605942, ymax=0.369549649572478,
	ytick style={color=black}
	]
\addplot [thick, steelblue31119180]
table {%
0 0
5e-08 2.61229980063544e-13
1e-07 1.09735830507051e-11
1.5e-07 1.54485530705206e-10
2e-07 1.22456395733605e-09
2.5e-07 6.68935697169072e-09
3e-07 2.8046024051904e-08
3.5e-07 9.63183489938757e-08
4e-07 2.82809657953884e-07
4.5e-07 7.31877878157609e-07
5e-07 1.7072669210839e-06
5.5e-07 3.65167830405538e-06
6e-07 7.25808111114197e-06
6.5e-07 1.35491881301274e-05
7e-07 2.39618411300025e-05
7.5e-07 4.0430568428519e-05
8e-07 6.54675722410322e-05
8.5e-07 0.000102235432677928
9e-07 0.00015460521173165
9.5e-07 0.000227205205330838
1e-06 0.000325452016722789
1.05e-06 0.000455562756977896
1.1e-06 0.000624548624246397
1.15e-06 0.000840194516487691
1.2e-06 0.00111101598027107
1.25e-06 0.00144617502296986
1.3e-06 0.00185541126286028
1.35e-06 0.00234894083127356
1.4e-06 0.00293730026501107
1.45e-06 0.003631178267044
1.5e-06 0.00444128939887773
1.55e-06 0.00537815516971969
1.6e-06 0.00645173288941508
1.65e-06 0.007671343188988
1.7e-06 0.0090455197029384
1.75e-06 0.0105816459252484
1.8e-06 0.0122863803799184
1.85e-06 0.0141662860561016
1.9e-06 0.0162275666895501
1.95e-06 0.0184758653322891
2e-06 0.0209174572574256
2.05e-06 0.0235586660785448
2.1e-06 0.0264045122492419
2.15e-06 0.0294608250595325
2.2e-06 0.0327334291756581
2.25e-06 0.0362263161805332
2.3e-06 0.0399433496232247
2.35e-06 0.0438892743534582
2.4e-06 0.048066753216345
2.45e-06 0.0524755945456339
2.5e-06 0.057119316246095
2.55e-06 0.0619978612945942
2.6e-06 0.0671062513725169
2.65e-06 0.072442890973863
2.7e-06 0.0780043790988953
2.75e-06 0.0837794913316373
2.8e-06 0.0897551311357059
2.85e-06 0.0959241217723828
2.9e-06 0.102267802695381
2.95e-06 0.108762332420165
3e-06 0.115396633430663
3.05e-06 0.122152423647481
3.1e-06 0.129004004193183
3.15e-06 0.135936042612656
3.2e-06 0.142937062885125
3.25e-06 0.149987470817112
3.3e-06 0.157069170150075
3.35e-06 0.164171402499201
3.4e-06 0.17128308870612
3.45e-06 0.178390849841438
3.5e-06 0.185484217888243
3.55e-06 0.192559908810775
3.6e-06 0.199613094668033
3.65e-06 0.206631528658771
3.7e-06 0.213617036281972
3.75e-06 0.220565750003152
3.8e-06 0.227464370563231
3.85e-06 0.234305738182875
3.9e-06 0.241090775506928
3.95e-06 0.247796338120658
4e-06 0.254410919419225
4.05e-06 0.260929745293588
4.1e-06 0.267336651102474
4.15e-06 0.273594618696164
4.2e-06 0.27970924450103
4.25e-06 0.285664027996141
4.3e-06 0.29142669956948
4.35e-06 0.29699286379047
4.4e-06 0.30236593037116
4.45e-06 0.30753018846126
4.5e-06 0.312461533019186
4.55e-06 0.317202724550899
4.6e-06 0.321743200732056
4.65e-06 0.32607055134619
4.7e-06 0.330208693457945
4.75e-06 0.334190898001139
4.8e-06 0.338010382135746
4.85e-06 0.341662719686733
4.9e-06 0.345203934998139
4.95e-06 0.348627989258819
5e-06 0.351952047211884
};
\end{axis}

\end{tikzpicture}
	\caption{Relative change \eqref{rel change} between ultrasound-enhanced concentration and the concentration computed with a constant diffusion coefficient at the top boundary $y=0.01$.}
	\label{fig: concentration difference}
\end{figure}
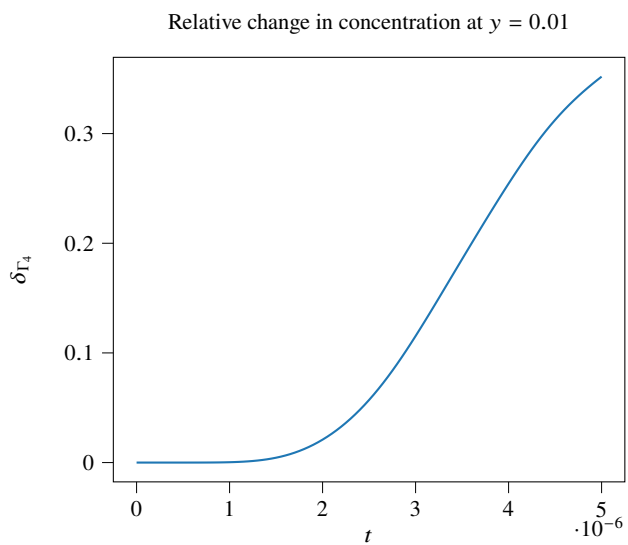
\newpage
Figure \ref{fig: concentration difference} shows the relative average difference between the ultrasound-enhanced concentration $u$ and the concentration $u_{D=D_0}$ computed using constant diffusion coefficient $D=D_0$
at the top boundary $\Gtop= \{(x,y) \in \partial \Omega: y= 0.01 \}$:
\begin{eq} \label{rel change}
	\delta_{\Gtop}(t) =	\frac{\int_{\Gtop} u(t) \dG - \int_{\Gtop} u_{D=D_0}(t) \dG}{\displaystyle \max_{[0,T]} \int_{\Gout} u_{D=D_0}(t) \dG}.
\end{eq}
We observe that the maximal relative change at the top boundary is around $35 \%$. 
\section*{Conclusion and outlook}\label{sec: conclusion}
In this work, we have analyzed a discontinuous Galerkin semi-discretization of a wave--convection-diffusion system that captures the influence of ultrasound waves on the diffusivity of drug concentration. We have established optimal \emph{a priori} error estimates in energy norms under suitable assumptions on the exact solution, discretization parameter, and polynomial degree.  An open theoretical question is to establish the optimal order of convergence in the $L^\infty(0,T; L^{2}(\Omega))$ norm. This could also lead to weaker assumptions on the polynomial degree in Theorem~\ref{thm: wellposedness nonlin} as one could exploit the estimate $\|\kappa (\ph - \Ih p)(\thh) \|_{L^{\infty}(\Omega)} \lesssim h^{-d/2} \Ltwonorm{\kappa (\ph - \Ih p)(\thh)}{\Omega} \lesssim h^{q+1-d/2}$.  \\
\indent A natural further extension of the theory concerns the convective velocity. In a more complete physical model,  the convective velocity $\bfv$ is also expected to depend on the ultrasound pressure; see, for example,~\cite{ferreira2022drug} for modeling details. Discussions on the mathematical treatment of a space-variable advection field can be found, for example, in~\cite[Sec.~5]{CangianDongGeorgoulisHouston2017} and~\cite{houston2002discontinuous}. Future work could investigate conditions under which the present methodology can be adapted to accommodate a pressure-dependent velocity.
\begin{acknowledgement}
The work of F.d.W. was partially supported by research grant no. G066725N from the Research Foundation - Flanders (FWO).  
The work of V.N.  was partially supported by the Dutch Research Council (NWO) under the grant OCENW.M.23.371 \sloppy with Grant ID \href{https://doi.org/10.61686/VLHHB85047 }{https://doi.org/10.61686/VLHHB85047}. 
\end{acknowledgement}
\ethics{Competing Interests}{
The authors have no conflicts of interest to declare that are relevant to the content of this chapter.}


\bibliographystyle{abbrv} 
\bibliography{references}
\end{document}